\documentclass[11pt]{article}
\usepackage {amssymb} 
\usepackage {amsmath}
\usepackage[mathscr]{eucal}
\usepackage{amsthm}
\usepackage{graphicx}
\usepackage {amscd}
\usepackage{subfigure}
\usepackage{stmaryrd}
\usepackage{pifont}
\usepackage{color}
\usepackage{multirow}
\usepackage{tabu}
\usepackage[titletoc, title]{appendix}
\usepackage{diagbox}
\usepackage[all,color]{xy}
\usepackage[colorlinks, citecolor=red]{hyperref}


\setlength\parindent{0pt}





\usepackage{geometry}  \geometry{ a4paper, total={160mm,250mm}, }

\newcommand{\vol}{{\rm vol}}
\newcommand{\ord}{{\rm ord}}
\newcommand{\fm}{\mathfrak{m}}
\newcommand{\fa}{\mathfrak{a}}
\newcommand{\cO}{\mathcal{O}}
\newcommand{\bR}{\mathbb{R}}
\newcommand{\bC}{\mathbb{C}}
\newcommand{\bZ}{\mathbb{Z}}
\newcommand{\lct}{{\rm lct}}
\newcommand{\wt}{{\rm wt}}

\newcommand{\Val}{{\rm Val}}

\newcommand{\hvol}{{\widehat{\rm vol}}}

\newcommand{\cF}{{\mathcal{F}}}

\newcommand{\fb}{{\mathfrak{b}}}

\newcommand{\bQ}{{\mathbb{Q}}}

\newcommand{\cX}{{\mathcal{X}}}

\newcommand{\cL}{{\mathcal{L}}}

\newcommand{\cR}{{\mathcal{R}}}
\newcommand{\cY}{{\mathcal{Y}}}

\newcommand{\cW}{{\mathcal{W}}}

\newcommand{\cG}{{\mathcal{G}}}

\newcommand{\bP}{{\mathbb{P}}}

\newcommand{\bin}{{\bf in}}

\newcommand{\cS}{{\mathcal{S}}}

\newcommand{\cB}{{\mathcal{B}}}

\newcommand{\sddb}{{\sqrt{-1}\partial\bar{\partial}}}
\newcommand{\cD}{{\mathcal{D}}}
\newcommand{\vphi}{{\varphi}}

\newcommand{\KE}{{\rm KE}}

\newcommand{\cE}{{\mathcal{E}}}

\newcommand{\mult}{{\rm mult}}
\newcommand{\bA}{{\mathbb{A}}}

\newcommand{\Fut}{{\rm Fut}}

\newcommand{\til}{\tilde}
\newcommand{\reg}{{\rm reg}}
\newcommand{\NA}{{\rm NA}}
\newcommand{\im}{{\rm Im}}

\newcommand{\Hom}{{\rm Hom}}
\newcommand{\la}{\langle}
\newcommand{\ra}{\rangle}
\newcommand{\bN}{\mathbb{N}}
\newcommand{\bD}{\mathbb{D}}

\newcommand{\Red}{\textcolor{red}}
\newcommand{\Blue}{}

\newcommand{\Cyan}{\textcolor{cyan}}

\newcommand{\HH}{\mathbb{H}}

\newcommand{\GG}{\mathbb{G}}
\newcommand{\PP}{\mathbb{P}}



\newtheorem{thm}{Theorem}[section]

\newtheorem{lem}[thm]{Lemma}
\newtheorem{def-prop}[thm]{Definition-Proposition}
\newtheorem{cor}[thm]{Corollary}
\newtheorem{claim}[thm]{Claim}
\newtheorem{defn}[thm]{Definition}

\newtheorem{prop}[thm]{Proposition}

\newtheorem{rem}[thm]{Remark}
\newtheorem{exmp}[thm]{Example}

\begin{document}

\title{Algebraicity of the Metric Tangent Cones and Equivariant K-stability}
\author{Chi Li, Xiaowei Wang and Chenyang Xu}
\date{}

\maketitle

\abstract{We prove two new results on the K-polystability of $\bQ$-Fano varieties based on purely algebro-geometric arguments. The first one says that any K-semistable log Fano cone has a special degeneration to a uniquely determined K-polystable log Fano cone. As a corollary, we combine it with the differential-geometric results to complete the proof of Donaldson-Sun's Conjecture which says that the metric tangent cone of any  point appearing on a Gromov-Hausdorff limit of K\"ahler-Einstein Fano manifolds depends only on the algebraic structure of the singularity.
The second result says that for any log Fano variety with a torus action, K-polystability is equivalent to equivariant K-polystability, that is, to check K-polystability, it is sufficient to check special test configurations which are equivariant under the torus action.
}

\setcounter{tocdepth}{2}
\tableofcontents

\section{Introduction}
We work over the field $\mathbb{C}$ of complex numbers. This paper is a sequel to the works in \cite{Li17, LX16, LX17}. Together with the previous works, we complete the proof of Donaldson-Sun's Conjecture \cite[Conjecture 3.22]{DS17} (see Theorem \ref{t-DSconj}), which says that as an affine variety, the metric tangent cone $C:=C_o(M_\infty, d_\infty)$ of any point $o$ on a Gromov-Hausdorff (GH) limit $(M_{\infty}, d_\infty)$ of a sequence of K\"ahler-Einstein Fano manifolds only depends on the algebraic structure of the singularity and is independent of the metric structure. Previously in \cite{LX17} we proved that the intermediate semistable cone $W$ in Donaldson-Sun's work (see \cite{DS17}) only depends on the algebraic structure. 

Our strategy 
is to systematically use minimizers of the normalized volume functional (defined in \cite{Li18}) to characterize valuations associated to metric tangent cones. Aiming at a vast generalization of the original differential geometric approach, we try to algebraize the construction of  \cite{DS17} by giving a completely local definition of a two-step degeneration process for an arbitrary klt singularity. This has been done under suitable assumptions about the minimizer of the normalized volume. In fact, these assumptions yield the first step of the degeneration and our current note draws a complete picture of the second step in the degeneration. In particular, with the help of the metric structures, we now have a rather satisfactory understanding of this process for those singularities appearing on the GH-limit $M_{\infty}$.  We will give more details in the following discussion.

\subsection{Main results}

For the first step of the degeneration, in  \cite{LX17}, we showed that the valuation considered in \cite{DS17}, whose original definition depends on the metric, is  a minimizer of the normalized volume (see \ref{def-hvol}) and such a minimizer is {\em uniquely} determined by the underlying algebraic structure. In fact, we proved in \cite{LX17} that for any klt singularity $(X, x)$, the real valuation $v\in \Val_{X,x}$ that minimizes the normalized volume functional and satisfies the following two conditions is unique up to rescaling:  {(a)} $v$ is quasi-monomial; {(b)} $v$ has a finitely generated associated graded ring. These two conditions are satisfied by the valuation constructed via K\"{a}hler-Einstein 
metric structure in \cite{DS17}.
So this result allows us to recover the semistable cone $W$, which is defined by the associated graded ring of $v$, by using the minimizing valuation and hence verifies the first part of  \cite[Conjecture 3.22]{DS17}. 
Note that it was conjectured in \cite{Li17} that minimizing valuations always satisfy these two conditions. \footnote{After the submission of this paper, the quasi-monomial property has been proved in \cite{Xu20}. More recently, the uniqueness of minimizers (up to rescaling) is proved in \cite{XZ20} unconditionally. }

We know that the semistable cone $W$ degenerates to the metric tangent cone $C$ and is K-semistable (see \cite{DS17} and \cite[Theorem 5.5]{LX17}). In the current paper, we complete the picture by showing that the metric tangent cone $C$  
is the unique K-polystable degeneration of $W$.  In particular,  this implies that $C$  depends only on the algebraic structure of $W$, which itself only depends on the algebraic structure of $o\in M_{\infty}$. 

\begin{thm}[{\cite[Conjecture 3.22]{DS17}}]\label{t-DSconj}
The metric tangent cone $C$ of $o\in M_{\infty}$ on a GH-limit of K\"ahler-Einstein Fano manifolds depends only on the algebraic  
structure of $o\in M_{\infty}$.
\end{thm}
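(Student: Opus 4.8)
The plan is to assemble the two-step degeneration chain $o\in M_\infty \rightsquigarrow W \rightsquigarrow C$ so that each object is forced, link by link, to be an invariant of the algebraic structure of the singularity alone. The first link is already algebraic by the previous papers; the content here is to make the second link algebraic and canonical, and then to identify the analytically defined $C$ with this canonical output.

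First I would recall from \cite{LX17} that the intermediate cone $W$ is produced entirely from the minimizer of the normalized volume functional at $o\in M_\infty$, and that this minimizing valuation is uniquely determined by the underlying algebraic structure of the singularity; consequently $W$, together with its cone structure, is itself an algebraic invariant of $o\in M_\infty$, independent of the metric. I would also record, via \cite[Theorem 5.5]{LX17}, that $W$ is a K-semistable log Fano cone.

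Next I would apply the first main result of this paper: any K-semistable log Fano cone admits a special degeneration to a uniquely determined K-polystable log Fano cone. Applied to $W$, this yields a K-polystable log Fano cone, say $C'$, unique up to isomorphism, to which $W$ specially degenerates. Because $W$ is an algebraic invariant and $C'$ is canonically attached to it, $C'$ depends only on the algebraic structure of $o\in M_\infty$. To finish, I would invoke the differential-geometric input of \cite{DS17}: the metric tangent cone $C$ is a normal, K-polystable log Fano cone realized as a special degeneration of $W$. The uniqueness in the second step then forces $C\cong C'$, and hence $C$ depends only on the algebraic structure, as claimed.

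The main obstacle is concentrated entirely in the uniqueness of the K-polystable degeneration of a K-semistable log Fano cone, which is the substantive algebro-geometric theorem of the paper. Existence of some special degeneration to a K-polystable cone is the comparatively softer part; the delicate point is that any two K-polystable degenerations of $W$ must be shown isomorphic. This is the cone analogue of the uniqueness of the closed orbit in geometric invariant theory, and I expect the proof to require a separatedness argument for the relevant degeneration families together with a reductivity input for the automorphism group of the polystable cone, so that distinct degenerating one-parameter subgroups are conjugate and land on the same limit.
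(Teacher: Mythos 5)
Your proposal is correct and takes essentially the same route as the paper: the paper's own proof establishes that $W$ is K-semistable and an algebraic invariant of the germ by \cite{LX17}, that the metric tangent cone $C$ is a special degeneration of $W$ with a Ricci-flat K\"ahler cone metric and hence K-polystable by \cite{CS15} (or Corollary \ref{cor-RF2K}), and then concludes $C\cong C'$ from the uniqueness of the K-polystable degeneration, exactly as the introduction frames it by applying Theorem \ref{t-uniquecone} with $(X,D,\xi_0)=(W,\emptyset,\xi_0)$ (the proof in the text runs this through Theorem \ref{t-doublecone} directly, degenerating $C$ and $C'$ to a common $C''$ with zero Futaki invariant). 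Your closing speculation about the mechanism behind uniqueness is the one divergence: the paper does not argue via separatedness plus reductivity of the automorphism group (reductivity is a consequence here, not an input), but via the normalized-volume minimization, Koll\'ar components, and an MMP-based double-degeneration over $\bC^2$ --- though since you invoke the uniqueness theorem as a black box, this does not affect the validity of your proof of the statement.
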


As in \cite{DS17}, the assumption on $M_{\infty}$ can be  weakened, e.g. $M_\infty$ is  a GH-limit of a sequence of projective manifolds $X$ with fixed volumes, bounded Ricci curvature and diameter. All arguments  extend verbatim. One can expect Theorem \ref{t-DSconj} will significantly simplify the determination of metric tangent cones in examples (see e.g. \cite{HS16}).

Since  a Fano cone singularity $(C,\xi)$ with a Ricci-flat K\"ahler cone metric is aways K-polystable (see \cite[Theorem 7.1]{CS15} and also Corollary \ref{cor-RF2K}), once knowing that $W$  depends only on the algebraic structure of $o\in M_{\infty}$, Theorem \ref{t-DSconj} is just a consequence of the following more general result by letting $(X,D, \xi_0)=(W, \emptyset, \xi_0)$: 

\begin{thm}[{Existence and uniqueness of K-polystable degenerations: log Fano cones}]\label{t-uniquecone}
Given a K-semistable log Fano cone singularity $(X,D,\xi_0)$, there always exists a special test configuration $(\mathcal{X},\mathcal{D}, \xi_0; \eta)$ that degenerates $(X,D,\xi_0)$ to a K-polystable log Fano cone singularity $(X_0,D_0,\xi_0)$. Furthermore, such $(X_0,D_0,\xi_0)$ is uniquely determined by $(X, D, \xi_0)$ up to isomorphism.
\end{thm}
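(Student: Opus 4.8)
The statement splits into an \emph{existence} and a \emph{uniqueness} assertion, and the plan is to treat them in parallel while keeping a single bounded parameter space in the background. First I would record the structural facts that make this possible: K-semistability of $(X,D,\xi_0)$ means the generalized Futaki invariant $\Fut$ is $\geq 0$ on every special test configuration, and a special test configuration $(\cX,\cD,\xi_0;\eta)$ with $\Fut=0$ degenerates $(X,D,\xi_0)$ to a log Fano cone that is again K-semistable and has the \emph{same} normalized volume $\hvol$ (this is the degeneration-preserves-K-semistability input underlying the fact, recalled above, that $W$ is K-semistable). Hence all the degenerations in play share one value of $\hvol$ and form a single S-equivalence class. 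The key reduction is then boundedness: K-semistable log Fano cones of fixed dimension and fixed $\hvol$ form a bounded family, so after fixing an embedding determined by a large (orbifold) power of the polarization, all these central fibres are parametrized by a finite-type scheme $Z$ carrying the action of a reductive group $G$ assembled from the torus $\bT$ and the automorphisms of the fixed ambient data.

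For existence I would run a maximal-degeneration argument on $Z$. Among all K-semistable central fibres appearing in the orbit closure of $[X,D,\xi_0]$, choose $(X_0,D_0,\xi_0)$ maximizing $\dim\mathrm{Aut}$ (equivalently, the dimension of a maximal torus); the maximum is attained by boundedness. I claim such a fibre is K-polystable. If not, it would admit a non-product special test configuration with $\Fut=0$, whose central fibre is a strictly more degenerate K-semistable log Fano cone whose automorphism group contains both a specialization of $\mathrm{Aut}(X_0)$ and the extra $\mathbb{G}_m=\eta$; by upper semicontinuity of $\dim\mathrm{Aut}$ in families together with this genuinely new $\mathbb{G}_m$, the dimension strictly increases, contradicting maximality. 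In GIT language this is the statement that the point of maximal $\dim\mathrm{Aut}$ has minimal-dimensional $G$-orbit in the closure, hence lies on the unique closed orbit.

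For uniqueness I would use the closed-orbit/separatedness principle. Two K-polystable degenerations $(X_0,D_0,\xi_0)$ and $(X_0',D_0',\xi_0)$ both lie in $\overline{G\cdot[X,D,\xi_0]}$; K-polystability translates into their $G$-orbits being closed, and an orbit closure contains a unique closed orbit, so the two are $G$-conjugate, i.e.\ isomorphic as log Fano cones. Phrased without the GIT black box, the same conclusion comes from a common-degeneration argument: build a test configuration over a two-dimensional base simultaneously degenerating $X_0$ and $X_0'$ to one log Fano cone $X_{00}$ with vanishing Futaki invariant; K-polystability of $X_0$ and of $X_0'$ forces each such degeneration to be a product induced by a one-parameter subgroup of its own automorphism torus, whence $X_0\cong X_{00}\cong X_0'$.

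The genuinely hard inputs are two. First, the boundedness of K-semistable log Fano cones of fixed volume, and the \emph{finite generation} needed to promote the abstract filtrations and normalized-volume-minimizing data into honest algebraic special test configurations with log Fano cone central fibres: this is where the finite generation and uniqueness-of-minimizer results of \cite{LX16,LX17} carry the weight, and where the possibly \emph{irrational} Reeb field $\xi_0$ must be dealt with (by rational approximation inside the Reeb cone, or by working $\bT$-equivariantly with the valuation rather than with $\Spec$ directly, so that $Z$ is literally a variety). Second, the construction of the common degeneration in the uniqueness step, the analogue of separatedness of the moduli, which requires controlling two independent test configurations of the same cone at once. I expect this common-degeneration construction, together with the finite generation feeding into it, to be the principal obstacle.
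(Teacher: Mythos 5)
Your proposal has genuine gaps in both halves, and they sit exactly where the paper's real work is. For existence, you lean on two inputs the paper neither proves nor needs. First, boundedness of K-semistable log Fano cones of fixed dimension and normalized volume is a much deeper statement than anything available here (especially for irregular Reeb fields, where even the parametrization of the family is delicate); the paper avoids it entirely. Its replacement is Lemma \ref{l-rank}: every non-trivial special degeneration with $\Fut=0$ strictly increases the dimension of the maximal torus, so the iteration (not polystable $\Rightarrow$ degenerate further; the central fibre is again K-semistable by Lemma \ref{l-semistable}; compose degenerations by Lemma \ref{l-doubledeg}) terminates after at most $\dim S$ steps, with no moduli space in sight. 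Second, your dictionary ``K-polystable $=$ closed $G$-orbit in a finite-type parameter space $Z$'' is unjustified: K-(poly)stability is not known to coincide with any finite-dimensional GIT stability, and the paper uses Hilbert-scheme embeddings only locally, to compose two commuting one-parameter degenerations (the $\hat\tau$ construction in the proof of Lemma \ref{l-semistable}), never as a global GIT equivalence. Relatedly, your strictness argument via ``upper semicontinuity of $\dim\mathrm{Aut}$ plus a genuinely new $\mathbb{G}_m$'' does not close: semicontinuity gives only $\dim \mathrm{Aut}(X_0)\ge \dim\mathrm{Aut}(X)$, and the new $\mathbb{G}_m$ generated by $\eta$ already lies inside $\mathrm{Aut}(X_0)$, so no strict inequality follows. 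What forces strictness is that the pre-existing torus extends fiberwise through the new test configuration, commuting with $\eta$ --- which is precisely Lemma \ref{l-rank}, and its proof is not formal: it runs through normalized volume minimization (Theorem \ref{p-m=k}), the estimate $f(1/k)=f(0)+O(1/k^2)$ coming from $\Fut=0$, lct bounds as in Claim \ref{c'}, and extraction of the degenerated Koll\'ar component via \cite[Corollary 1.4.3]{BCHM10}. You assume this implicitly.

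For uniqueness, you correctly identify the needed statement --- a simultaneous degeneration of two $\Fut$-zero central fibres to a common log Fano cone with vanishing Futaki invariants --- but you defer it wholesale, and it is exactly Theorem \ref{t-doubledeg}/\ref{t-doublecone}, the technical heart of the paper. Nothing in your sketch constructs the family over $\bC^2$: the paper obtains it by degenerating the Koll\'ar component $E_k$ attached to the first test configuration along the second, which requires Liu's minimax formula (Proposition \ref{p-liu}), K-semistability of the central fibre, the second-order vanishing supplied by $\Fut=0$, and the double Rees algebra construction; in the irregular case one additionally needs diophantine approximation of $\xi_0$, polynomiality of the leading coefficient of the index character (Proposition \ref{p-polynomial}), and the reduction from weakly special to special test configurations (Proposition \ref{p-specialcone}, Corollary \ref{c-ws=s}) or its analytic substitute from Section \ref{s-analytic}. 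So while your skeleton (iterate, then common degeneration) matches the paper's, the load-bearing steps are either replaced by unavailable inputs (boundedness, a GIT identification) or left unproved.
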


If we restrict ourselves to the quasi-regular case of log Fano cones, then we obtain the following result for log Fano varieties.

\begin{thm}[{Existence and uniqueness of K-polystable degenerations: log Fano varieties}]\label{t-uniqueness}

Given a K-semistable log Fano variety $(S,B)$, there always exists a special test configuration $(\mathcal{S},\mathcal{B})$ that degenerates $(S, B)$ to a K-polystable pair $(S_0,B_0)$. Furthermore, such log Fano pair $(S_0,B_0)$ is uniquely determined by $(S, B)$ up to isomorphism.
\end{thm}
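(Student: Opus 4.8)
The plan is to deduce Theorem \ref{t-uniqueness} from Theorem \ref{t-uniquecone} through the standard dictionary between quasi-regular log Fano cones and log Fano pairs. Given a K-semistable log Fano variety $(S,\Delta)$, after clearing denominators we may choose a positive integer $r$ so that $L:=-r(K_S+\Delta)$ is Cartier and very ample, and form the affine cone
\[
X=\Spec\bigoplus_{m\geq 0}H^0\bigl(S,mL\bigr),
\]
together with the boundary divisor $D$ obtained as the closure of the cone over $\Delta$ and the canonical Reeb vector field $\xi_0$ generating the $\bG_m$-action induced by the grading. Then $(X,D,\xi_0)$ is a quasi-regular log Fano cone singularity, and the first step is to recall the equivalence, under this correspondence, that $(S,\Delta)$ is K-semistable as a log Fano variety if and only if $(X,D,\xi_0)$ is K-semistable as a log Fano cone.

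Granting this, I would apply Theorem \ref{t-uniquecone} to $(X,D,\xi_0)$ to obtain a special test configuration $(\cX,\cD,\xi_0;\eta)$ degenerating it to a K-polystable log Fano cone $(X_0,D_0,\xi_0)$, unique up to isomorphism. Because the degeneration preserves the Reeb field $\xi_0$ and $\xi_0$ was quasi-regular, generating an honest $\bG_m$, the central fiber $(X_0,D_0,\xi_0)$ remains quasi-regular. I would then take the $\bG_m$-quotient, equivalently $\mathrm{Proj}$ of the graded ring, to recover a log Fano pair $(S_0,\Delta_0)$ with $X_0=C(S_0,\Delta_0;L_0)$ for the induced polarization $L_0$.

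Next I would descend the test configuration: the special test configuration of cones $(\cX,\cD,\xi_0;\eta)$ is equivariant for the Reeb $\bG_m$, so passing to the quotient produces a special test configuration $(\cS,\til\Delta)$ of $(S,\Delta)$ with central fiber $(S_0,\Delta_0)$. The other direction of the stability dictionary, namely that K-polystability of the quasi-regular cone $(X_0,D_0,\xi_0)$ is equivalent to K-polystability of the pair $(S_0,\Delta_0)$, then shows that $(S_0,\Delta_0)$ is K-polystable. Finally, uniqueness of $(X_0,D_0,\xi_0)$ up to isomorphism transfers, via the quotient construction, to uniqueness of $(S_0,\Delta_0)$ up to isomorphism.

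The main obstacle I anticipate is twofold. First, one must ensure that the special test configuration furnished by Theorem \ref{t-uniquecone} genuinely descends to a test configuration of pairs; this requires that the degeneration vector field $\eta$ commutes with the Reeb $\bG_m$ and that the quotient is well-behaved, so that $\mathrm{Proj}$ carries the family of cones to a flat family of pairs with the correct central fiber. Second, the stability correspondence must be matched precisely at the level of polystability: K-polystability of a cone involves only test configurations compatible with $\xi_0$, and one must verify that these correspond exactly to special test configurations of the base pair, so that no stability information is lost or gained under the cone construction. Once these compatibilities are in place, Theorem \ref{t-uniqueness} is a direct specialization of Theorem \ref{t-uniquecone}.
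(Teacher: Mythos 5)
Your reduction runs in the opposite direction from the paper's logic, and within this paper it is circular. The paper proves Theorem \ref{t-uniqueness} first and directly: existence of the K-polystable degeneration is obtained by iterating special degenerations with vanishing generalized Futaki invariant, using Lemma \ref{l-semistable} (the central fiber of a Futaki-zero special degeneration of a K-semistable pair is again K-semistable), Lemma \ref{l-doubledeg} (two consecutive Futaki-zero special degenerations compose into a single one), and Lemma \ref{l-rank} (each non-trivial step strictly increases the dimension of the maximal torus, so the process terminates after at most $\dim S$ steps); uniqueness then follows from Theorem \ref{t-doubledeg}, together with the observation that any special test configuration to a K-polystable fiber automatically has $\Fut=0$. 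Theorem \ref{t-uniquecone}, which you take as a black box, is proved \emph{afterwards} (Section \ref{ss-fanocone}): its proof establishes the cone analogue Theorem \ref{t-doublecone} and then literally says ``we can argue as in the proof of Theorem \ref{t-uniqueness} to conclude.'' So if your argument replaced the paper's proof, the iteration-and-termination argument would exist nowhere: Theorem \ref{t-uniqueness} would rest on Theorem \ref{t-uniquecone}, whose proof in turn defers to the (now deleted) argument for Theorem \ref{t-uniqueness}. Indeed, the proof of Theorem \ref{t-doublecone} itself reduces key steps to the quasi-regular case by taking quotients along rational Reeb vectors and invoking Lemma \ref{l-semistable}, confirming that in this paper the pair case is the engine, not a corollary of the cone case.

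That said, the dictionary you sketch is essentially sound and mostly available in the paper. The semistability equivalence you ``recall'' is legitimate via Theorem \ref{p-m=k} combined with \cite{Li17} (K-semistability of $(S,\Delta)$ is equivalent to $\wt_{\xi_0}$ minimizing $\hvol$ on the cone); the matching $\Fut(\cS,\til{\Delta};\eta)=\Fut(\cX,\cD,\xi_0;\eta)$ for the cone over a pair test configuration is stated explicitly at the end of Section 2; and the descent you flag as an obstacle is in fact unproblematic, since by Definition \ref{defn-QGTC} every test configuration of $(X,D,\xi_0)$ is $\langle\xi_0\rangle$-equivariant with $\eta$ commuting with $\xi_0$, the Reeb cone of the central fiber equals that of $X$, and the Seifert $\bG_m$-quotient inverts the relative cone construction, so polystability matches formally in both directions. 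Hence your proposal would be a legitimate derivation \emph{given} an independent proof of Theorem \ref{t-uniquecone}; the genuine gap is that no such independent proof is supplied, and none exists in this paper. The real content — Theorem \ref{t-doubledeg}, i.e., degenerating the Koll\'ar components $E_k$ along the second test configuration via \cite{BCHM10} and the normalized-volume estimates of Claim \ref{c'}, together with the torus-rank argument of Lemma \ref{l-rank} — is exactly what your reduction bypasses rather than replaces.
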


We note that for the special case of $\bQ$-Gorenstein smoothable Fano varieties, this was proved in \cite[7.1]{LWX14} based on an analytic results on the existence and uniqueness of Gromov-Hausdorff limit for a flat family of Fano K\"ahler-Einstein manifolds (see also \cite{SSY16}).
We emphasize here that our proof of Theorem \ref{t-uniquecone} is new and  uses only algebro-geometric arguments. Moreover, our techniques also give rise to an equivariant criterion for testing K-polystability.

\begin{thm}[{$T$-equivariant K-stability=K-stability}]\label{t-equivariant}
Let $(S,B)$ be a log Fano variety with an action by a torus group $T\cong (\mathbb{C}^*)^d$. Then $(S,B)$ is K-polystable if and only if it is $T$-equivariantly K-polystable, that is for all $T$-equivariant special test configuration $(\mathcal{S},\mathcal{B})$, the generalized Futaki invariant ${\rm Fut}(\mathcal{S},\mathcal{B})\ge 0$, and the equality holds only when the test configuration is a product, i.e. $(\mathcal{S},\mathcal{B})\cong (S,B)\times \mathbb{A}^1$.
\end{thm}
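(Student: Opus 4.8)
The plan is to reduce the testing of arbitrary special test configurations to $T$-equivariant ones via an averaging/degeneration procedure, using the uniqueness result in Theorem \ref{t-uniqueness} as the essential input. One direction is trivial: K-polystability obviously implies $T$-equivariant K-polystability, since equivariant test configurations form a subclass of all test configurations. So the content is the converse. First I would reformulate equivariant K-polystability as saying that $(S,\Delta)$ is at least K-semistable: indeed, a $T$-equivariant K-polystable variety is in particular $T$-equivariantly K-semistable, and I would want to first upgrade this to full K-semistability. This itself requires an equivariant-semistability-implies-semistability statement, which I expect to follow from a degeneration argument showing that any destabilizing (or non-positive Futaki) test configuration can be replaced by a $T$-equivariant one with Futaki invariant no larger.

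The heart of the matter is the following. Suppose $(S,\Delta)$ is $T$-equivariantly K-polystable but \emph{not} K-polystable. Then either it is not K-semistable, or it is K-semistable but admits a non-product special test configuration $(\mathcal{S},\til{\Delta})$ with $\Fut(\mathcal{S},\til{\Delta})=0$. In the latter case, by Theorem \ref{t-uniqueness} the K-semistable pair $(S,\Delta)$ degenerates to a \emph{unique} K-polystable pair $(S_0,\Delta_0)$, and since the zero-Futaki degeneration is non-product, $(S_0,\Delta_0)\not\cong(S,\Delta)$. The key observation is that the torus $T$ acts on the \emph{whole} degeneration process: because the minimizer/limit $(S_0,\Delta_0)$ is canonically (hence $T$-equivariantly) determined by $(S,\Delta)$, the central fiber $(S_0,\Delta_0)$ carries the induced $T$-action, and the special test configuration producing it can be taken to be $T$-equivariant. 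I would make this precise by showing that the uniqueness in Theorem \ref{t-uniqueness} forces the degenerating test configuration to commute with $T$ up to the $G=\mathrm{Aut}(S_0,\Delta_0)$-action, so that after conjugating by an element of $G$ one obtains a genuinely $T$-equivariant non-product special test configuration with $\Fut=0$. This contradicts $T$-equivariant K-polystability.

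The single hardest step will be establishing the $T$-equivariance of the canonical degeneration and, relatedly, handling the automorphism group carefully. The uniqueness in Theorem \ref{t-uniqueness} says $(S_0,\Delta_0)$ is determined up to isomorphism, but to produce a \emph{$T$-equivariant} test configuration I need the degeneration to be compatible with $T$ on the nose, not merely up to an abstract isomorphism. The natural mechanism is: $T$ acts on $(S,\Delta)$, hence on the space of special test configurations and on their central fibers; by uniqueness the resulting K-polystable central fiber is $T$-invariant as a point of the relevant moduli/parameter space, so $T$ maps into $\mathrm{Aut}(S_0,\Delta_0)$, and one can choose the degeneration inside a $T$-fixed locus. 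Technically this amounts to an equivariant rigidity / descent argument, and I would use the structure of the filtration or valuation realizing the K-polystable degeneration (as in \cite{LX17}) together with the fact that the optimal destabilizing object minimizing the Futaki (or $\delta$/normalized-volume-type) functional is unique and therefore $T$-invariant. The minimizer being unique and canonical is exactly what forces $T$-invariance.

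There is one further subtlety I would need to address: the passage from $T$-equivariant K-semistability to K-semistability in the first place, for the case where $(S,\Delta)$ is not even K-semistable. Here I would invoke the same averaging principle at the level of the $\delta$-invariant or the normalized volume: the normalized-volume minimizer (a quasi-monomial valuation) is unique by \cite{LX16, LX17}, hence $T$-invariant, so the induced degeneration is $T$-equivariant; if it were destabilizing it would already violate $T$-equivariant K-semistability. Assembling these pieces, $T$-equivariant K-polystability yields first K-semistability and then the non-existence of non-product zero-Futaki special test configurations, which is precisely K-polystability, completing the equivalence.
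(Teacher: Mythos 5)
Your overall skeleton (quote the equivariant K-semistability statement, then rule out non-product zero-Futaki special test configurations by showing the relevant degeneration can be made $T$-equivariant) has the right shape, but the step that carries all the weight is done by a mechanism that does not work, and it is precisely where the paper does something different. You try to extract $T$-equivariance of the degenerating family from the uniqueness in Theorem \ref{t-uniqueness}: since $(S_0,\Delta_0)$ is ``canonically determined,'' it is $T$-invariant as a point of a moduli/parameter space, so after conjugating by ${\rm Aut}(S_0,\Delta_0)$ one can choose the test configuration in a $T$-fixed locus. But uniqueness of the central fibre \emph{up to abstract isomorphism} gives no control on the family: it does not show that the filtration of $\bigoplus_m H^0(S,-m\lambda(K_S+\Delta))$ inducing the degeneration is $T$-invariant, and there is no parameter space here in which ``$T$-fixed locus'' makes sense (no K-moduli space with the requisite properties is available, and even granting one, passing from a $T$-fixed point to a genuinely $T$-equivariant test configuration emanating from $(S,\Delta)$ would need an \'etale-slice type argument you do not supply). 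The same over-reach appears in your semistability step: uniqueness of the normalized volume minimizer is \emph{not} proved in \cite{LX16, LX17} in general --- it is part of the then-open Stable Degeneration Conjecture, as the paper itself notes --- so ``unique minimizer, hence $T$-invariant'' is not available. The reduction of K-semistability to $T$-equivariant K-semistability must be quoted directly from \cite{LX16}, where it is proved by equivariantly degenerating destabilizing data, not via uniqueness of a minimizer.

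The paper closes the gap with Lemma \ref{l-rank}, which is a stronger and more concrete statement than the one you aim for: \emph{every} special test configuration $(\cS,\til{\Delta},\eta)$ with $\Fut(\cS,\til{\Delta},\eta)=0$ of a K-semistable log Fano pair carrying a $(\bC^*)^d$-action is automatically equivariant, i.e.\ $\cS$ admits a fiberwise $(\bC^*)^d$-action commuting with $\eta$ and extending the action on $S$. Its proof runs through valuations rather than moduli: the test configuration is encoded by Koll\'ar components $E_k$ over the cone $C(S,\Delta;-\lambda(K_S+\Delta))$ (Lemma \ref{l-n1}); the hypothesis $\Fut=0$ gives $f(1/k)=f(0)+O(1/k^2)$ for the normalized volumes, which combined with K-semistability of the central fibre (Lemma \ref{l-semistable}) and Proposition \ref{p-liu} yields the log discrepancy and lct estimates needed to extract, via \cite[Corollary 1.4.3]{BCHM10}, a model over $X\times\bA^1$ degenerating $(Y_k,E_k)$ along each $\bC^*$-factor of the torus; since this model and $Y_k\times\bA^1$ are isomorphic in codimension one and both are the anti-ample model of the same divisorial valuation, they coincide, so $E_k$ is torus-invariant. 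Given Lemma \ref{l-rank}, Theorem \ref{t-equivariant} is immediate: any non-product zero-Futaki special test configuration is already $T$-equivariant, contradicting $T$-equivariant K-polystability --- with no appeal to Theorem \ref{t-uniqueness}, conjugation by automorphisms, or fixed loci in moduli. Your outline becomes a proof only if you supply an argument of this strength for the equivariance of the test configuration itself, which is exactly the missing content.
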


Note that Theorem \ref{t-equivariant} is proved for smooth Fano manifolds  in \cite{DaSz16} for general reductive group actions using analytic approach. 
Our result works for any singular $\bQ$-Fano varieties. This combined the work \cite{IS17} allows one to effectively check the K-stability of $\bQ$-Fano $T$-varieties of complexity one. \footnote{After the submission of the paper, the equivalence between equivariant K-stability and K-stability has been completely solved in \cite{Zhu20}.}


We will first deal with the quasi-regular case i.e. Theorem \ref{t-uniqueness}. 
The key technical result in its proof is Theorem \ref{t-doubledeg}, which says that if 
$(\cS^{(i)}, \mathcal{B}^{(i)}) (i=1,2)$ are two special test configurations of the log Fano pair $(S, B)$ with central fibres $(S^{(i)}_0, B^{(i)}_0)$ and vanishing Futaki invariants, then
there exist special test configurations $(\cS'^{(i)}, B'^{(i)})$ of $(S^{(i)}_0, B^{(i)}_0)$ such that $(\cS'^{(i)}, B'^{(i)})$ have isomorphic central fibers.  In practice, we will work on the cones: we first take the cone over $(S, B)$ to get a log Fano cone $(X, D)$, and then take cones over $(\cS^{(i)}, \cB^{(i)})$ to get test configurations $(\cX^{(i)}, \cD^{(i)})$ of $(X, D)$. Then we just need to find a common degeneration of the central fibre of $(X^{(i)}_0, D^{(i)}_0)$. To construct such test configurations, we aim at constructing a $(\mathbb{C}^*)^2$-equivariant family $(\mathfrak X, \mathfrak D)$ of log Fano cones over $\bC^2$, such that if taking the base change over $\bC\times \{1\}$ (resp $\{1\}\times \bC$), we get back the test configuration $(\cX^{(1)},\cD^{(1)})$ (resp. $(\cX^{(2)},\cD^{2})$). Then the special fiber $(\mathfrak X, \mathfrak D)\times_{\mathbb{C}^2}\{(0,0)\}$ gives a common degeneration of $(X^{(i)},D^{(i)})$. 
This family $(\mathfrak X, \mathfrak D)$ is obtained by using a divisor $\cE_k^{(2)}$ over $(\cX^{(2)}, \cD^{(2)})$ to degenerate $(\cX^{(2)}, \cD^{(2)})$, where the divisor $\cE_k^{(2)}$ is a birational transform of $E_k\times\bC$ and $E_k$ belongs to a sequence of special divisors $\{E_k\}$ $(k\ge 1)$ over $X$. To see $\cE_k^{(2)}$ induces a degeneration, one needs to show that there is a birational model $\cY_k^{(2)}\to (\cX_k^{(2)},\cD_k^{(2)})$ such that the only exceptional divisor of $\cY_k^{(2)}/\cX_k^{(2)}$ is $\cE_k$ for $k\gg 1$. This  is a subtle property, and we prove it by combining the tools of Minimal Model Program (MMP) with a careful analysis of normalized volumes functional (see Section \ref{sec-common1} for a more detailed explanation for this step). 

The following commutative diagram shows some relation between different objects in this proof. The symbols `$\rightsquigarrow$' means the degeneration under a special test configuration and `$\dashrightarrow$' means taking a $\bC^*$-quotient. See \eqref{eq-CD2} for the more detailed diagram.
\begin{equation}
\xymatrix@1 @R=1.1pc @C=0.9pc
{
(X^{(2)}_0, D^{(2)}_0) \ar@{~>}_{(\cX'^{(2)}, \cD'^{(2)})}[dddd]
 \ar@{-->}[dr] &   & &  &(X, D) \ar_{ (\cX^{(2)}, \cD^{(2)})}@{~>}[llll] \ar^{(\cX^{(1)}, \cD^{(1)})}@{~>}[dddd]  \ar@{-->}[ld] & 
 \\
& (S^{(2)}_0,B^{(2)}_0) \ar@{~>}_{(\cS'^{(2)},\mathcal{B}'^{(2)})}[dd] &  &  (S, B) \ar_<<<<<<<{(\cS^{(2)}, \mathcal{B}^{(2)})}@{~>}[ll] \ar^{(\cS^{(1)}, \mathcal{B}^{(1)})}@{~>}[dd] & & \\
&   &   &\\
&                              (S'_0, B'_0)     &               &(S^{(1)}_0, B^{(1)}_0) \ar@{~>}^{\stackrel{}{(\cS'^{(1)},\mathcal{B}'^{(1)})}}[ll]& & \\
(X'_{0}, D'_{0}) \ar@{-->}[ru] & & & & (X^{(1)}_0, D^{(1)}_0)\ar@{~>}^{(\cX'^{(1)}, \cD'^{(1)})}[llll] \ar@{-->}[lu] & .
}
\end{equation}

\medskip

To confirm Donaldson-Sun's Conjecture (see \cite[Conjecture 3.22]{DS17}), we then use some approximation approach to treat the case of a general log Fano cone, i.e. including the irregular case.
However, the common degenerations are a priori only {\it weakly special} (Definition \ref{d-wstc}). So we extend \cite[Theorem 4]{LX14}  to (possibly irregular) log Fano cones, proving that for K-polystable log Fano cones, weakly special test configurations with vanishing Futaki invariants must already be special. 
To apply this to metric tangent cones which are Ricci-flat K\"{a}hler cones, we use the result of Colins-Sz\'{e}kelyhidi \cite{CS15} about the K-polystability of Ricci-flat K\"{a}hler cones. Since we need to allow more general test configurations than just special test configurations, we provide a proof of this fact (see Remark \ref{rem-Ding}) in Appendix \ref{s-analytic} by adapting the argument of Berman in \cite{Ber15} to our setting. 


\medskip


We now sketch the organization of the paper. More details will be given at the beginning of each section. In 2.1, we recall basic tools in our arguments including normalized volumes, normalized multiplies and Koll\'{a}r components. In section 2.2, we recall the notions of log Fano cones, their test configurations and K-stability. We also discuss how to get test configurations using models over log Fano cones. In the quasi-regular case, we are reduced to the K-stability of log Fano pairs. In section 3, we prove our main results in the case of log Fano pairs. In section 3.1, we prove a lemma about special degenerations of K-semistable log Fano pairs with zero Futaki invariants. In section 3.2, we prove the main technical result (Theorem \ref{t-doubledeg}) on common special degenerations of special degenerations with zero Futaki invariants. In section 3.3, we finish the proof of main results for log Fano pairs. In section 4, we deal with the general case of log Fano cones. In 4.1, we obtain common weakly special degenerations for log Fano cones with vanishing generalized Futaki invariants. In 4.2, we show that these weakly special test configurations are indeed special test configurations. We generalize the last step of results in \cite{LX16} to the case of log Fano cones. We complete the proof of Theorem \ref{t-uniquecone} and Donaldson-Sun's conjecuture in section 4.3. In the appendix, we prove the analytic result that Ricci-flat K\"{a}hler cones are Ding-polystable among $\bQ$-Gorenstein test configurations. This result could substitute results in section 4.2 to complete the proof of Theorem \ref{t-DSconj}.

\bigskip

\noindent {\bf Acknowledgement:} 
We would like thank Yuchen Liu for pointing out an inaccuracy in the previous version of the preprint and the anonymous referees for many helpful comments. 
CL is partially supported by NSF (DMS-1405936 and DMS-1810867) and an Alfred P. Sloan research fellowship.  XW is partially supported by a Collaboration Grants for Mathematicians from
Simons Foundation:281299 and NSF:DMS-1609335. 
CX is partially supported by `Chinese National Science Fund for Distinguished Young Scholars (11425101)'. Part of the work was done when  XW was
visiting IHES and CX is visiting Institut Henri Poincar\'e (partially sponsored by `the Poincar\'e Chair'), to which they want to thank the inspiring research environment.

\bigskip

\noindent {\bf Notation and Conventiones: } We follow the standard notation in \cite{KM98, Kol13}. In this paper, a {\it variety} is a reduced, separated and finite type scheme over $\mathbb{C}$, that is allowed to be reducible.  We call a pair $(S,B)$ a {\it log Fano} variety if $(S,B)$ has klt singularities, and $-(K_S+B)$ is ample. 


\section{Preliminaries}

\subsection{Normalized volumes}\label{ss-normalised}

In this section, we recall the definition of the normalized volume of valuations centered at a klt singularity $x\in (X,D)$. This is introduced in \cite{Li18}. For readers' convenience, we discuss some basic properties which will be needed later. 
\begin{defn}
Let $X={\rm Spec}_{\bC}(R)$ be an irreducible affine variety and $x\in X$ a closed point.
We denote by $\Val_{X,x}$ the space of real valuations $v: R\rightarrow \bR_{\ge 0}\cup\{+\infty\}$ that satisfy the following conditions: for any $f, g\in R$:\\
(1) $v(fg)= v(f)+v(g)$;\hskip 2mm
(2) $v(f+g)\ge \min\{v(f), v(g)\}$; \hskip 2mm
(3) $v(0)=+\infty, v(\bC^*)=0$; \hskip 2mm
(4) $v(f)>0$ if $f(x)=0$.

For any $v\in \Val_{X,x}$ and $m\in \bR$, its $m$-th valuation ideal is defined as $\fa_m(v):=\fa_m(v, X)=\{f\in R; \  v(f)\ge m\}$. 
\end{defn}
 We remark that $\Val_{X,x}$ is also called the `non-archimedean link' around $x\in X$ in some literatures. 

For any $m>0$, $\fa_m(v)$ is a primary ideal associated to the maximal ideal $\fm_x$. We will denote its Hilbert-Samuel multiplicity by $\mult(\fa_m)$. If $\Lambda=v(R)\subset \mathbb{R}_{\ge 0}$ denotes the valuative semigroup of $v$, then $\{\fa_m(v);\  m\in \Lambda\}$ is a $\Lambda$-graded sequence of ideals. In other words, they satisfy, for any $m, m'\in \Gamma$, $(i): \fa_{m'}(v)\subseteq \fa_{m}(v)$ if $m'\ge m$ and $(ii): \fa_m(v)\cdot \fa_{m'}(v)\subseteq \fa_{m+m'}(v)$. Note that $\{\fa_m(v); m\in \bZ\}$ is also a $\bZ$-graded sequence of ideals.
\begin{def-prop}[\cite{ELS03, LM09}]
Let $X$ be an irreducible variety of dimension $n$. 
For any $v\in \Val_{X,x}$, the volume of $v$ is the following well-defined quantity:
\begin{equation}
\vol(v)=\lim_{m\rightarrow+\infty}\frac{\dim_\bC(R/\fa_m(v))}{m^n/n!}=\lim_{m\rightarrow+\infty}\frac{\mult(\fa_m)}{m^n}=:\mult(\fa_\bullet).
\end{equation}
\end{def-prop}
Now we assume $(X,D)$ is a log pair such that $K_X+D$ is $\bQ$-Cartier. For any divisorial valuation $v=\ord_S$ where $S$ is a prime divisor on a normal variety $Y$ with a proper birational morphism $\mu: Y\rightarrow X$, the {\em log discrepancy} of $\ord_S$ is defined as $A_{(X,D)}(\ord_S)=\ord_S(K_Y-\mu^*(K_X+D))+1$. By \cite{JM12} and \cite{BFFU15}, there is a canonical way to extend the log discrepancy to become a lower semicontinuous function $A_{(X,D)}: \Val_{X,x}\rightarrow \bR\cup \{+\infty\}$. 

\begin{def-prop}[{see \cite[Theorem 1.1]{Li18}}]\label{def-hvol}
Assume $x\in (X,D)$ is a klt singularity. 
For any $v\in \Val_{X,x}$, its normalized volume $\hvol_{(X,D,x)}(v)$ is defined as:
\begin{equation}
\hvol_{(X,D,x)}(v)=\left\{
\begin{array}{lcl}
A_{(X,D)}(v)^n\cdot \vol(v),& \text{ if } & A_{(X,D)}(v)<+\infty;\\
+\infty, & \text{ if } & A_{(X,D)}(v)=+\infty.
\end{array}
\right.
\end{equation}
For simplicity, we will just write $\hvol(v)$ if the singularity $x\in(X,D)$ is clear. This quantity is a rescaling invariant: $\hvol(\lambda v)=\hvol(v)$ for any $\lambda>0$.

The volume of a klt singularity $x\in(X,D)$ is defined to be the following positive number
\begin{equation}
\vol(x,X,D)=\inf_{v\in \Val_{X,x}}\hvol_{(X,D,x)}(v).
\end{equation}

\end{def-prop}
It has been shown that there always exists a minimizer $v$ of $\hvol_{(X,D,x)}$ among all $v\in {\rm Val}_{X,x}$ in \cite{Blu16}. The expected properties of the minimizers are formulated in the {\it Stable Degeneration Conjecture} (\cite[Conjecture 6.1]{Li18}, \cite[Conjecture 1.2]{LX17}).  
The case  of cone singularities over Fano varieties was studied in \cite{Li17,LL16}. The general case was systematically studied in \cite{LX16} under the assumption that the minimizer is a divisorial valuation and in \cite{LX17} under the assumption that the minimizer is a higher rank quasi-monomial valuation.

We will need a relation between the normalized volume and the normalized multiplicity of a graded sequence of ideals.
\begin{prop}[\cite{Liu16}]\label{p-liu}
If $x\in (X,D)$ is an $n$-dimensional klt singularity, then we have
$$\vol(x,X,D)=\inf_{\fb_{\bullet}}\  \mult(\fb_{\bullet})\cdot \lct^n(X,D;\fb_{\bullet}),$$
where $\fb_{\bullet}$ runs over all graded sequence of primary ideals cosupported at $x$.  
\end{prop}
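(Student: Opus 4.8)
The plan is to prove the identity by establishing the two opposite inequalities between
$$\vol(x,X,D)=\inf_{v\in\Val_{X,x}}\hvol(v)\qquad\text{and}\qquad I:=\inf_{\fb_\bullet}\mult(\fb_\bullet)\cdot\lct^n(X,D;\fb_\bullet).$$
Before doing so I would isolate two workhorses. The first is the \emph{multiplicity comparison inequality}
$$\vol(v)\cdot v(\fb_\bullet)^n\le \mult(\fb_\bullet)\qquad\text{for every } v\in\Val_{X,x}\text{ and every graded sequence }\fb_\bullet,$$
where $v(\fb_\bullet):=\lim_{m\to\infty}v(\fb_m)/m=\inf_m v(\fb_m)/m$; the limit exists because $m\mapsto v(\fb_m)$ is subadditive (if $f,g$ compute $v(\fb_m),v(\fb_{m'})$ then $fg\in\fb_{m+m'}$ forces $v(\fb_{m+m'})\le v(\fb_m)+v(\fb_{m'})$). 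The second is the valuative description of the log canonical threshold of a graded sequence,
$$\lct(X,D;\fb_\bullet)=\inf_{w\in\Val_{X,x}}\frac{A_{(X,D)}(w)}{w(\fb_\bullet)},$$
which I would invoke from the Jonsson--Mustaţă theory underlying the extension of $A_{(X,D)}$ to all of $\Val_{X,x}$ (cf. \cite{JM12, BFFU15}). For the first inequality I would argue by comparing colengths: since $v(\fb_\bullet)=\inf_m v(\fb_m)/m$, one has $v(\fb_m)\ge m\,v(\fb_\bullet)$, so every $f\in\fb_m$ satisfies $v(f)\ge m\,v(\fb_\bullet)$ and hence $\fb_m\subseteq\fa_{m\,v(\fb_\bullet)}(v)$; the smaller ideal has the larger multiplicity, so $\mult(\fb_m)\ge\mult(\fa_{m\,v(\fb_\bullet)}(v))$, and dividing by $m^n$ and letting $m\to\infty$ gives $\mult(\fb_\bullet)\ge v(\fb_\bullet)^n\vol(v)$.

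\textbf{The inequality $I\le \vol(x,X,D)$.} Given $v\in\Val_{X,x}$ with $A_{(X,D)}(v)<+\infty$, I would test the right-hand infimum against the valuative graded sequence $\fb_\bullet:=\fa_\bullet(v)$, which is a genuine graded sequence of primary ideals cosupported at $x$. By the definition of $\vol(v)$ one has $\mult(\fa_\bullet(v))=\vol(v)$, and since $v(\fa_m(v))\ge m$ forces $v(\fa_\bullet(v))\ge 1$, the valuative formula gives
$$\lct(X,D;\fa_\bullet(v))\le \frac{A_{(X,D)}(v)}{v(\fa_\bullet(v))}\le A_{(X,D)}(v).$$
Therefore $\mult(\fa_\bullet(v))\cdot\lct^n(X,D;\fa_\bullet(v))\le \vol(v)\cdot A_{(X,D)}(v)^n=\hvol(v)$, and taking the infimum over $v$ yields $I\le\vol(x,X,D)$.

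\textbf{The inequality $\vol(x,X,D)\le I$.} Fix a graded sequence $\fb_\bullet$ and set $c=\lct(X,D;\fb_\bullet)$. Using the valuative formula I would choose valuations $v_i\in\Val_{X,x}$, rescaled so that $v_i(\fb_\bullet)=1$, with $A_{(X,D)}(v_i)\to c$ (these necessarily have finite log discrepancy). Applying the multiplicity comparison with $v=v_i$ gives $\vol(v_i)=\vol(v_i)\cdot v_i(\fb_\bullet)^n\le\mult(\fb_\bullet)$, whence
$$\vol(x,X,D)\le \hvol(v_i)=A_{(X,D)}(v_i)^n\,\vol(v_i)\le A_{(X,D)}(v_i)^n\,\mult(\fb_\bullet)\xrightarrow{\ i\to\infty\ } c^n\,\mult(\fb_\bullet).$$
Thus $\vol(x,X,D)\le \lct^n(X,D;\fb_\bullet)\cdot\mult(\fb_\bullet)$, and taking the infimum over $\fb_\bullet$ gives $\vol(x,X,D)\le I$.

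The step I expect to be the crux is securing the second workhorse in the generality needed: the valuative characterization of the log canonical threshold of a \emph{graded sequence} of ideals, together with the fact that the relevant infimum is genuinely approached by valuations in $\Val_{X,x}$ on which $A_{(X,D)}$ is finite. This rests on the delicate extension of the log discrepancy to the full valuation space and on the existence of the asymptotic limits defining $\mult(\fb_\bullet)$ and $v(\fb_\bullet)$. By contrast, the colength comparison and the two infimum manipulations are essentially formal once that input is available.
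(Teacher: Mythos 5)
Your proposal is correct, and it is essentially the same argument as in \cite{Liu16}, which the paper cites without reproving: one inequality by testing the infimum against the valuation ideals $\fa_\bullet(v)$ (using $\mult(\fa_\bullet(v))=\vol(v)$ and $v(\fa_\bullet(v))\ge 1$), the other by combining the valuative formula $\lct(X,D;\fb_\bullet)=\inf_w A_{(X,D)}(w)/w(\fb_\bullet)$ from \cite{JM12,BFFU15} with the colength comparison $\mult(\fb_\bullet)\ge v(\fb_\bullet)^n\vol(v)$. Your identification of the crux is also accurate, with the minor observations that an approximating sequence $v_i$ suffices (no valuation computing the lct is needed), and that $v_i(\fb_\bullet)>0$ automatically places the center of $v_i$ at $x$, so the rescaled valuations indeed lie in $\Val_{X,x}$.
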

We now state some central results from our previous works and refer to the next section for the notations of log Fano cones (see Definition \ref{d-logfanocone}) and their K-stability (see e.g. Definition \ref{d-semicone}). 

\begin{thm}[\cite{Li17, LL16, LX16, LX17}]\label{p-m=k}
Let $ (X, D,\xi)$ be a  log Fano cone singularity. Then it is K-semistable if and only if the valuation $\wt_{\xi}$ induced by $\xi$ is a minimizer of $\hvol_{(X,D,x)}$ on $ \Val_{X,x}$.
\end{thm}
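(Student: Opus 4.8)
The plan is to establish both implications by relating the normalized volume functional to the generalized Futaki invariant through one-parameter families of valuations. Write $X=\Spec R$ with $R=\bigoplus_\alpha R_\alpha$ graded by the torus $T$ whose Lie algebra contains $\xi$, so that $\wt_\xi$ is the monomial valuation $\wt_\xi(f)=\min\{\alpha(\xi)\colon f_\alpha\neq 0\}$. Each special test configuration $(\cX,\cD,\xi;\eta)$ of the cone is driven by a degeneration field $\eta$ and produces a ray $\{w_s\}_{s\ge 0}\subset \Val_{X,x}$ of $T$-invariant valuations with $w_0=\wt_\xi$, obtained by rescaling $\wt_{\xi+s\eta}$ on the central fiber and transporting it back to $X$ via the equivariant trivialization away from the vertex. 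The first task is to prove the \emph{derivative formula}: along such a ray, $\tfrac{d}{ds}\big|_{s=0^+}\hvol_{(X,D,x)}(w_s)$ equals a strictly positive multiple of $\Fut(\cX,\cD,\xi;\eta)$. This is a direct computation once one writes $A_{(X,D)}(w_s)$ as an affine function of $s$ and differentiates $\vol(w_s)$ using the variation of the associated filtration on the central fiber; positivity of the multiple comes from $A_{(X,D)}(\wt_\xi)>0$ and $\vol(\wt_\xi)>0$.

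Granting the derivative formula, the direction ``$\wt_\xi$ a minimizer $\Rightarrow$ K-semistable'' is immediate: at a minimum every one-sided directional derivative is $\ge 0$, so $\Fut(\cX,\cD,\xi;\eta)\ge 0$ for every special test configuration, which is the definition of K-semistability.

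The reverse direction is the crux. Here I would first invoke Liu's formula (Proposition \ref{p-liu}), $\vol(x,X,D)=\inf_{\fb_\bullet}\mult(\fb_\bullet)\cdot\lct^n(X,D;\fb_\bullet)$, to reduce minimizing $\hvol$ to minimizing the normalized multiplicity over graded sequences of primary ideals. Two reductions then cut the problem down to the rays above. First, replacing each $\fb_\bullet$ by its $T$-initial degeneration preserves the multiplicity and, by lower semicontinuity of the log canonical threshold under specialization, does not raise $\mult(\fb_\bullet)\cdot\lct^n$; since $T$-invariant sequences form a subclass, the two infima agree and it suffices to minimize over $T$-invariant data. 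Second, by running a suitable MMP one extracts, for each such valuation, a Koll\'ar component $E$ over $x\in(X,D)$, and the ray from $\wt_\xi$ through (a rescaling of) $\ord_{E}$ is exactly the ray attached to a special test configuration of the cone. The final input is the \emph{convexity} of $s\mapsto\hvol(w_s)$ along each such ray: combined with the derivative formula and K-semistability (which forces the derivative at $s=0$ to be $\ge 0$), convexity yields $\hvol(\wt_\xi)\le \hvol(w_s)$ for all $s$, hence $\hvol(\wt_\xi)\le \hvol(\ord_{E})$ for every Koll\'ar component $E$. Together with the two reductions this shows $\wt_\xi$ attains the infimum, i.e. it is a minimizer.

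The main obstacle is precisely this reverse direction, and within it the two global inputs that make the local-to-ray reduction legitimate: (i) the convexity of $\hvol$ along the rays $\{w_s\}$, which I expect to obtain from the log-concavity of the limiting Okounkov measures governing $\vol(w_s)$ together with the affine behavior of $A_{(X,D)}(w_s)$; and (ii) the MMP extraction of Koll\'ar components and the verification that passing to $T$-initial degenerations neither raises $\hvol$ nor loses the infimum. The K-semistability hypothesis enters only through the sign of the first derivative, so the real work is showing that checking this sign along the special-test-configuration rays certifies a \emph{global} minimum — which is exactly what convexity plus the two reductions deliver.
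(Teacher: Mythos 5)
Your outline is correct and follows essentially the same route as the proof this paper imports from \cite{Li17, LL16, LX16, LX17} (the theorem is quoted here without proof, but its ingredients are recapped in the proofs of Theorem \ref{t-doubledeg} and Lemma \ref{l-model}): the easy direction via the derivative formula $f'(0)=C\cdot\Fut$ along the ray $w_\epsilon$ attached to a special test configuration, and the hard direction via Liu's formula (Proposition \ref{p-liu}), degeneration to $T$-invariant initial ideal sequences using non-increasing of lct under specialization, MMP extraction of ($T$-equivariant) Koll\'ar components, and convexity of $\epsilon\mapsto\hvol(w_\epsilon)$. The only imprecision is your proposed mechanism for that convexity: in the cited works it comes from the smoothness and strict convexity of $\xi\mapsto\vol(\xi)$ on the Reeb cone of the central fiber (proved via Okounkov bodies/index characters) combined with the rescaling invariance of $\hvol$ to absorb the affine factor $A^n$, rather than directly from log-concavity of limiting Okounkov measures.
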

We will also use the following notion frequently:
\begin{defn}\label{def-Kollar}
Let $(X, D, x)$ be a klt singularity. A prime divisor $S$ over $(X,D,x)$ is called a Koll\'{a}r component over $(X, D, x)$, if there exists a projective birational morphism $\mu: Y\rightarrow X$ such that
(i) $\mu$ is an isomorphism over $X\setminus \{x\}$ and the exceptional divisor $S=\mu^{-1}(x)$ is irreducible and $\bQ$-Cartier; (ii) $-S$ is $\mu$-ample; (iii) $(Y, \mu_*^{-1}D+S)$ is plt.
\end{defn}
For any Koll\'{a}r component, we define its {\it different} $\Delta_S$ by the following equality:
$$K_S+\Delta_S=(K_Y+S+\mu_*^{-1}D)|_S.$$ It is easy to see that $(S, \Delta_S)$ is a log Fano pair.

The relevance of Koll\'ar components to the minimization of normalized volume is contained in the following result:
\begin{thm}[{\cite[Theorem 1.2, 1.3]{LX16}}]
Let $(X, D, x)$ be a klt singularity. Assume that $v_0\in \Val_{X,x}$ is a minimizer of $\hvol_{(X,D,x)}$. Then we can find a sequence of Koll\'ar components $S_k$ and constants $c_k>0$, such that 
\[
\mbox{$c_k\cdot \ord_{S_k}\rightarrow v_0$ \ \ and \ \ $\hvol(\ord_{S_k})\rightarrow \hvol(v)$ as $k\rightarrow+\infty$.}
\]
Moreover, if $v_0$ is divisorial, then $v_0=c \cdot \ord_S$ for some $c>0$ and a Koll\'{a}r component $S$ satisfying the condition that the log Fano pair $(S, \Delta_S)$ is K-semistable.
 \end{thm}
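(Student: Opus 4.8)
\section*{Proof proposal}

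The plan is to combine Liu's characterization of the volume of the singularity (Proposition \ref{p-liu}) with an extraction of Koll\'ar components via the MMP. First I would attach to the minimizer $v_0$ its graded sequence of valuative ideals $\fa_\bullet=\{\fa_m(v_0)\}$. By the Definition-Proposition above one has $\mult(\fa_\bullet)=\vol(v_0)$, while testing the log canonical threshold of $\fa_\bullet$ against $v_0$ itself (note $v_0(\fa_\bullet)=1$) gives $\lct(X,D;\fa_\bullet)\le A_{(X,D)}(v_0)$, hence
$$\mult(\fa_\bullet)\cdot \lct^n(X,D;\fa_\bullet)\le \vol(v_0)\cdot A_{(X,D)}(v_0)^n=\hvol(v_0).$$
Since Proposition \ref{p-liu} bounds the left-hand side below by $\vol(x,X,D)$ and $v_0$ is a minimizer, every inequality is forced to be an equality; in particular $\lct(X,D;\fa_\bullet)=A_{(X,D)}(v_0)$ and $v_0$ computes the log canonical threshold of its own graded sequence.

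Second, for each $m$ I would look at the single ideal $\fa_m=\fa_m(v_0)$ and its log canonical threshold $c_m:=\lct(X,D;\fa_m)$. Running an MMP (via \cite{BCHM10}) to extract a divisor computing this threshold produces a Koll\'ar component $S_m$ over $(X,D,x)$ with $A_{(X,D)}(\ord_{S_m})=c_m\cdot \ord_{S_m}(\fa_m)$. Using the elementary volume bound $\vol(\ord_{S_m})\le \mult(\fa_m)/\ord_{S_m}(\fa_m)^n$, which comes from $\fa_m^\ell\subseteq \fa_{\ell\,\ord_{S_m}(\fa_m)}(\ord_{S_m})$, I obtain
$$\hvol(\ord_{S_m})=A_{(X,D)}(\ord_{S_m})^n\,\vol(\ord_{S_m})\le c_m^n\,\mult(\fa_m)=(m\,c_m)^n\cdot\frac{\mult(\fa_m)}{m^n}.$$
As $m\to\infty$ the right-hand side converges to $\lct(X,D;\fa_\bullet)^n\,\mult(\fa_\bullet)=\hvol(v_0)$ by the asymptotic relations between an ideal and its graded sequence, so $\limsup_m\hvol(\ord_{S_m})\le\hvol(v_0)$. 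Together with the reverse inequality $\hvol(\ord_{S_m})\ge\hvol(v_0)$ (as $v_0$ is a minimizer) this gives $\hvol(\ord_{S_m})\to\hvol(v_0)$. Setting $S_k=S_{m_k}$ for a suitable subsequence and normalizing $c_k>0$ so that $c_k\,\ord_{S_k}$ agrees with $v_0$ on a fixed ideal, the fact that each $\ord_{S_k}$ computes the threshold of $\fa_{m_k}$, which in the limit is computed exactly by $v_0$, yields $c_k\,\ord_{S_k}\to v_0$.

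For the moreover part, suppose $v_0=a\cdot\ord_S$ is divisorial. The same extraction now realizes $S$ directly as a Koll\'ar component, and $\ord_S$ induces a degeneration of $(X,D,x)$ to the orbifold cone $(X_0,D_0)=C(S,\diff_S D)$ with its canonical Reeb vector field $\xi$, under which the central valuation $\wt_\xi$ is the degeneration of $\ord_S$, preserving both log discrepancy and volume. By the lower semicontinuity of the volume of the singularity along this degeneration, $\vol(x_0,X_0,D_0)\ge\vol(x,X,D)$, and $\hvol_{(X_0,D_0,x_0)}(\wt_\xi)=\hvol_{(X,D,x)}(v_0)=\vol(x,X,D)$; combined these force $\wt_\xi$ to minimize $\hvol$ on the cone. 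Theorem \ref{p-m=k} then says precisely that the log Fano cone $(X_0,D_0,\xi)$ is K-semistable, i.e. $S$ is a K-semistable Koll\'ar component.

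The hard part will be the MMP extraction guaranteeing that the divisor computing $\lct(X,D;\fa_m)$ can be taken to be a genuine Koll\'ar component in the sense of Definition \ref{def-Kollar} (with $S_m$ irreducible and $\bQ$-Cartier, $-S_m$ $\mu$-ample, and $(Y_m,\mu_{m*}^{-1}D+S_m)$ plt), together with the control of the normalization constants ensuring the actual convergence $c_k\,\ord_{S_k}\to v_0$ of valuations rather than merely of normalized volumes; both rest on \cite{BCHM10} and on the finite generation and plt-blowup techniques developed in the earlier works.
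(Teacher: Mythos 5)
First, note that the paper you were given does not prove this theorem at all: it is quoted verbatim from \cite{LX16}, so the comparison has to be with the proof in that source. Measured against it, the approximation half of your proposal is essentially the right reconstruction: passing to the graded sequence $\fa_\bullet=\fa_\bullet(v_0)$, forcing equality $\lct^n(X,D;\fa_\bullet)\cdot\mult(\fa_\bullet)=\hvol(v_0)$ via Proposition \ref{p-liu} and minimality, extracting from each $\fa_m$ a Koll\'ar component with $\hvol(\ord_{S_m})\le \lct^n(\fa_m)\,\mult(\fa_m)$, and letting $m\to\infty$ is exactly the strategy of \cite{LX16}. Two caveats, both of which you at least partially flag: the BCHM extraction of an lc place of $(X,D+c_m\fa_m)$ gives a priori only a log canonical, not plt, blowup, and making it a genuine Koll\'ar component requires the tie-breaking/perturbation argument of \cite{LX16}; and the convergence $c_k\,\ord_{S_k}\to v_0$ is not a formal consequence of ``each $\ord_{S_k}$ computes the threshold of $\fa_{m_k}$, which in the limit is computed by $v_0$'' --- valuations computing $\lct(\fa_\bullet)$ need not be unique, and normalizing on a single fixed ideal does not identify the weak limit; one needs an Izumi-type boundedness/compactness argument to extract a limit and then an argument pinning it down. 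As written, that step is an assertion, not a proof.

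The genuine gap is in the ``moreover'' part, where your key inequality points the wrong way. Lower semicontinuity of the normalized volume of singularities in families (which, incidentally, postdates \cite{LX16}) says that the volume can only \emph{drop} on the central fiber, i.e. $\vol(x_0,X_0,D_0)\le\vol(x,X,D)$; it cannot give the inequality $\vol(x_0,X_0,D_0)\ge\vol(x,X,D)$ that you invoke. Worse, the inequality you need --- that $\wt_\xi$ minimizes $\hvol$ on the cone $(X_0,D_0)$ --- is, by Theorem \ref{p-m=k}, \emph{equivalent} to the K-semistability you are trying to establish, so the proposed route is circular: the hard content is precisely that the degeneration induced by the minimizer does not lower the infimum of $\hvol$. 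The actual argument in \cite{LX16} (following \cite{Li17}, and mirrored in this paper in the computation after Lemma \ref{l-n1}) runs in the opposite logical direction: assuming the induced log Fano pair $(S,\Delta_S)$ is not K-semistable, one takes a special test configuration with negative generalized Futaki invariant, builds from it a ray of quasi-monomial valuations $w_\epsilon$ in $\Val_{X,x}$ emanating from $\ord_S$, and shows $\left.\frac{d}{d\epsilon}\hvol(w_\epsilon)\right|_{\epsilon=0}=C\cdot\Fut$ with $C>0$, contradicting minimality of $v_0$ for small $\epsilon$. It is this differentiation mechanism, not any semicontinuity of $\vol(x,X,D)$, that converts minimization into K-semistability; your sketch of the ``moreover'' part would need to be replaced wholesale by it. (Similarly, the claim that ``the same extraction now realizes $S$ directly as a Koll\'ar component'' when $v_0=a\cdot\ord_S$ hides the equality-case analysis showing $S$ is an lc place of $(X,D+c\,\fa_\bullet)$ extractable by a plt blowup.)
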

In the above theorem, when $v_0$ is a divisorial minimizer, then \cite{Blu16} also shows it yields a Koll\'ar component.
In the case of K-semistable log Fano cones, the approximation stated in the above theorem can be realized concretely by perturbing the Reeb vector field to rational ones.

\subsection{K-stability of log Fano cones}

In this section, we recall the definition of a log Fano cone singularity and its K-stability, by essentially following \cite{CS12, CS15} and \cite[Section 2.5]{LX17}.
Denote by $T$ a complex torus which is isomorphic to $(\bC^*)^r$. 
\subsubsection*{Test configurations}
\begin{defn}
Let $X$ be an $n$-dimensional reduced affine variety which is not necessarily irreducible. We say that a $T$-action on $X$ is good if it is effective and there is a closed $T$-fixed point $x\in X$ (called the vertex) that is in the closure of any $T$-orbit.
By a $T$-singularity in this paper, we always mean an affine variety $X$ with a good $T$-action. If  $D$ is a $T$-invariant $\bR$-divisor on $X$ 
we say that $(X, D)$ is a pair with a good $T$-action.
\end{defn}
Let $N=\Hom(\bC^*, T)$ be the co-weight lattice and $M=N^*$ the weight lattice. If $X={\rm Spec}_{\bC}(R)$ is a $T$-variety, then there is a weight space decomposition:
\begin{equation}\label{eq-Rweight}
R=\bigoplus_{\alpha\in\Gamma} R_\alpha \text{ where } \Gamma=\{\alpha\in M\; |\; R_\alpha\neq 0\}\subset M.
\end{equation}
The action being good implies $R_0=\bC$. We will call any element $\xi$ in the Lie algebra $N_\bR:=N\otimes \mathbb{R}$ {\it a coweight vector} (or abbreviated as {\it a vector}). We will denote by $\la \xi\ra$ the subtorus  of $T$ generated by $\xi$, i.e. the subtorus corresponding to the minimal linear $\bQ$-linear subspace $V\subset N\otimes \mathbb{Q}$ such that $ V\otimes \bR$ contains $\xi$. 

If $T$ acts on a smooth variety $X$, then $\xi$ will give a vector field on $X$. For example, if we consider the multiplication of $\bC^*$ on $\bC$, then the coweight vector $1\in \bZ$ yields the vector field $t\partial_t$. 
\begin{defn}
The Reeb cone of $X$ with respect to a good $T$-action is the following set:
\begin{equation}
{N_\bR^+}:=\left\{\xi\in N_\bR\; |\; \langle \alpha, \xi\rangle >0 \text{ for any } \alpha\in \Gamma\backslash\{0\}\right\}.
\end{equation}
Any vector $\xi\in {N_\bR^+}$ will be called a Reeb vector on the $T$-variety $X$.
\end{defn}
\begin{defn}\label{def-volxi}
For any $\xi\in {N_\bR^+}$, we define its volume as:
$$\vol_X(\xi):=\vol_{X, x}(\xi)=\lim_{k\to \infty} \frac{\sum_{\langle \xi, \alpha \rangle\le k}{\rm \Blue{\dim_\bC}}(R_{\alpha}) }{ k^n/n! }.$$ 
\end{defn}

The limit in the above definition was known to exist by using the multivariable Hilbert series as in \cite[Proof of Theorem 4.10]{CS12} (see also \cite{MSY08}). 
If $(X,x)$ is a normal affine $T$-variety, then each $\xi\in N_\bR^+$ corresponds to a valuation $\wt_\xi\in \Val_{X,x}$ which is defined as:
\begin{equation}\label{eq-wtxi}
\wt_\xi(f)=\min\left\{ \langle \alpha, \xi\rangle; f=\sum_{\alpha}f_\alpha \text{ with } f_\alpha\neq 0\in R_\alpha \right\}.
\end{equation}
Note that in this case $\vol_X(\xi)$ in Definition \ref{def-volxi} is just the volume of the valuation $\wt_\xi$.

One key property of the volume function is the following. 

\begin{lem}[{see {\cite{CS12, MSY08, LX17}}}]
The function 
$\xi\mapsto \vol_{X,x}(\xi)$ is smooth and strictly convex on $ {N_\bR^+}$.
\end{lem}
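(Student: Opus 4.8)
The plan is to reduce the statement to properties of the Laplace transform of the Duistermaat--Heckman measure attached to the $T$-action, since both smoothness and strict convexity become transparent once $\vol_X$ is written as such a transform. Write $M_\bR \cong \bR^r$ for the weight space, let $\sigma = \overline{\mathrm{cone}(\Gamma)} \subset M_\bR$ be the closed cone generated by the weights, and recall that $\ft_\bR^+$ is exactly the interior of the dual cone $\sigma^\vee$. For $\xi \in \ft_\bR^+$ and $t>0$ consider the index character (Hilbert series)
\[
H(t,\xi) = \sum_{\alpha\in\Gamma}\len(R_\alpha)\,e^{-t\langle\alpha,\xi\rangle},
\]
which converges because $\langle\alpha,\xi\rangle$ is bounded below on $\Gamma\setminus\{0\}$ and tends to $+\infty$, while $\len(R_\alpha)$ grows only polynomially. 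A standard Abelian theorem, comparing $H$ with the Stieltjes integral of the counting function $N_\xi(s)=\sum_{\langle\alpha,\xi\rangle\le s}\len(R_\alpha)$, identifies the leading term: $\vol_X(\xi)=\lim_{t\to0^+}t^nH(t,\xi)$.

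First I would record that convexity (in the weak sense) is essentially free. For each fixed $t>0$ the map $\xi\mapsto t^nH(t,\xi)$ is convex, being a positive combination of the convex exponentials $e^{-t\langle\alpha,\xi\rangle}$; since $\vol_X$ is the pointwise limit of these functions as $t\to0^+$, it is convex on the convex cone $\ft_\bR^+$. To upgrade this to smoothness and strict convexity I would pass to an integral representation. Rescaling the weight-counting measures $\nu_k = k^{-n}\sum_{\alpha}\len(R_\alpha)\,\delta_{\alpha/k}$ and using finite generation of $R$ together with the polynomial asymptotics of its multigraded Hilbert function (equivalently, Newton--Okounkov body theory), one obtains a positive Radon measure $\mathrm{DH}$ on $\sigma$, the Duistermaat--Heckman measure, with $\nu_k\to\mathrm{DH}$ weakly. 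Homogeneity of $\vol_X$ of degree $-n$ then yields the clean formula
\[
\vol_X(\xi)=\int_{\sigma}e^{-\langle y,\xi\rangle}\,d\mathrm{DH}(y),
\]
which one checks by evaluating $\mathrm{DH}(\{\langle y,\xi\rangle\le s\})=\tfrac{s^n}{n!}\vol_X(\xi)$ and integrating $e^{-s}$.

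With this representation in hand, smoothness is immediate: for $\xi$ in a compact subset of $\ft_\bR^+$ one has $\langle y,\xi\rangle\ge c|y|$ on $\sigma$ with $c>0$, so every $\xi$-derivative of the integrand, which only inserts polynomial factors in $y$, is dominated by $|y|^m e^{-c|y|}$, integrable against $\mathrm{DH}$ (whose mass in balls of radius $R$ grows like $R^n$). Hence one may differentiate under the integral sign and $\vol_X$ is real-analytic on $\ft_\bR^+$; in particular $\nabla^2\vol_X(\xi)=\int_\sigma y\otimes y\,e^{-\langle y,\xi\rangle}\,d\mathrm{DH}(y)\succeq0$. For strict convexity I would compute, for $0\neq v\in N_\bR$,
\[
v^{\mathsf T}\nabla^2\vol_X(\xi)\,v=\int_\sigma\langle y,v\rangle^2\, e^{-\langle y,\xi\rangle}\,d\mathrm{DH}(y),
\]
which is strictly positive precisely because $\mathrm{DH}$ is not supported on the hyperplane $v^\perp$. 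Here the effectiveness of the good $T$-action enters: it forces $\Gamma$ to generate $M$, so $\sigma$ is full-dimensional and $\mathrm{DH}$ has positive density on its interior, whence no hyperplane can carry its full mass. Thus $\nabla^2\vol_X\succ0$ everywhere and $\vol_X$ is strictly convex.

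The main obstacle is the second step, namely producing the Duistermaat--Heckman measure and the integral representation with enough regularity to drive the non-degeneracy argument. Establishing weak convergence $\nu_k\to\mathrm{DH}$ and, crucially, the absolute continuity of $\mathrm{DH}$ with density positive on the interior of $\sigma$ is where the real work lies; everything else is formal manipulation of Laplace transforms. If one prefers to avoid the regularity of $\mathrm{DH}$, strict convexity can instead be deduced from real-analyticity together with the boundary behaviour $\vol_X(\xi)\to+\infty$ as $\xi\to\partial\ft_\bR^+$: a degenerate Hessian direction $v$ would force $\vol_X$ to be invariant under translation by $v$, contradicting the blow-up along the ray that exits the pointed cone $\ft_\bR^+$.
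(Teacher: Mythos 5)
Your skeleton is correct and it is a genuinely different packaging from the paper's. The paper splits the two claims: smoothness is quoted from \cite[Theorem 3]{CS12}, where $\vol_X(\xi)$ appears as the leading coefficient of the meromorphically continued index character, while for strict convexity it first passes to the normalization $Y\to X$, writes $\vol_{X,x}(\xi)=\sum_i \vol_{Y,y_i}(\xi)$ over the preimages $y_i$ of $x$, and then invokes the Newton--Okounkov computation of \cite[Proof of Lemma 3.8]{LX17}: each summand is the Euclidean volume of the body cut out of a strictly convex cone in $\bR^n$ by the hyperplane $\{\langle\cdot,\xi\rangle\le 1\}$, and the explicit formula for this volume is visibly smooth and strictly convex in $\xi$. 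Your Laplace transform of the Duistermaat--Heckman measure is the same convex geometry in different clothes --- $\mathrm{DH}$ is precisely the pushforward of Lebesgue measure on the Okounkov cone under the weight projection --- but it buys cleaner bookkeeping: derivatives become moments of $e^{-\langle y,\xi\rangle}\,d\mathrm{DH}$, plain convexity is free from the pointwise limit of $t^nH(t,\xi)$, and strict convexity becomes a statement about supports of measures. The formal parts of your argument (the Abelian comparison $\vol_X(\xi)=\lim_{t\to 0^+}t^nH(t,\xi)$, differentiation under the integral, the Hessian formula) are all fine.

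The genuine gap sits exactly where you place the ``real work,'' and your one-line justification for it fails at the stated level of generality. First, $X$ here is only assumed reduced, possibly reducible and non-normal, so before any Okounkov-body construction one must pass to the normalization and sum over the preimages of $x$ --- the paper's first step, absent from your writeup; valuations and Okounkov cones require domains. Second, after this reduction only the $n$-dimensional pieces contribute to $\mathrm{DH}$ in the $k^{-n}$ rescaling, and effectiveness of the $T$-action on all of $X$ constrains the union of \emph{all} weight cones, not the cones of the top-dimensional pieces; so the chain ``effectiveness $\Rightarrow$ $\Gamma$ generates $M$ $\Rightarrow$ $\sigma$ full-dimensional $\Rightarrow$ $\mathrm{DH}$ has positive interior density'' is a non sequitur. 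Concretely, take $X\subset\bC^3$ the union of the plane $\{z=0\}$ and the line $\{x=y=0\}$, with $T=(\bC^*)^2$ acting with weight $(1,0)$ on both $x$ and $y$ and weight $(0,1)$ on $z$: this is a good effective action, $\sigma$ spans $M_\bR$, yet $\vol_{X,x}(\xi)=\xi_1^{-2}$ is smooth and convex but not strictly convex. So no argument from effectiveness alone can close your gap; the nondegeneracy must be verified piece by piece, which is exactly what the \cite{LX17} formula supplies on each normal piece (and which succeeds in all of the paper's applications, where the pieces are equidimensional with full-dimensional weight cones). Your fallback argument does not evade this input either: extracting translation-invariance from a single degenerate Hessian direction uses the integral representation (harmless --- it needs only the existence of $\mathrm{DH}$, a much softer fact), but the blow-up $\vol_X(\xi)\to+\infty$ at $\partial\ft_\bR^+$ again requires the support of $\mathrm{DH}$ to fill $\sigma$, or else quoting properness of the volume on the Reeb cone as in \cite{MSY08, CS12}; as the example above shows, that properness can likewise fail for reducible $X$.
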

\begin{proof}
The smoothness was proved in \cite[Theorem 4.10]{CS12} where $\vol_X(\xi)$ was interpreted as the leading coefficient of the expansion of the so-called index character, (which also appeared in the earlier work of Martelli-Sparks-Yau (see \cite[(6.10)]{MSY08}).

The strict convexity of $\vol_{X,x}$ follows from \cite[Section 3.2]{LX17}.  In fact, if we let $Y\to X$ be the normalization of $X$,  the $T$-action can be lifted to $Y$.  Denote the preimage of $x$ to be $\{y_i\}_i$, then we know  $y_i$ are on {pairwise} distinct components $Y_i$ of $Y$, and the $T$-action on each $Y_i$ is good. We claim that the following identity holds true:
$$\vol_{X, x}(\xi)=\sum_i \vol_{Y, y_i}(\xi).$$
Indeed, since $\xi\mapsto \vol_{X,x}(\xi)$ and $\xi\mapsto \vol_{Y, y_i}(\xi)$ are continuous, we just need to verify the identity when $\xi$ is rational. Any rational $\xi$ generates a $\bC^*$-action and $\vol(\xi)$ reduces to the degree of an ample orbifold line bundle (see \cite[Proposition 4.3]{CS12} and \cite[5.3]{MSY08}). The identity follows from the fact that the degree of the orbifold line bundle is the sum of its degrees on irreducible components. 

Thus we may assume $X$ to be normal.  Then \cite[Proposition  3.10]{LX17}, which generalizes the convexity result from \cite{MSY08}, says that $\xi\mapsto \vol_{X,x}(\xi)$ is a strictly convex function for $\xi\in {N_\bR^+}$.



\end{proof}

\begin{defn}[Log Fano cone singularity]\label{d-logfanocone}
Let $(X, D)$ be an affine pair with a good $T$ action. Assume $(X,D)$ is normal with  klt singularities. Then for any $\xi\in {N_\bR^+}$,  we call the triple $(X,D,\xi)$ a {\it log Fano cone} structure that is polarized by $\xi$.  If $\la \xi\ra\cong \bC^*$ which is equivalent to saying that $\xi$ is a multiple of a vector in ${N_\bQ^+}$, then we call $(X, D, \xi)$ quasi-regular. Otherwise, we call it irregular.
\end{defn}

\begin{defn}[Quotient in the quasi-regular case]\label{d-quotient}
In the quasi-regular case, we can take the quotient $(S,B)$ of $(X\setminus\{x\},D\setminus\{x\})$ by the $\mathbb{C^*}$-group $\la \xi \ra$ generated by $\xi$ in the sense of a Seifert $\bC^*$-bundle, and we will denote by $(X,D)/\langle \xi \rangle$. More precisely, assume $\xi \in \frac{1}{l}N$, and we write 
\begin{eqnarray*}
R=\bigoplus_{k=0} \left(\bigoplus_{\la \xi, \alpha\ra=k/l} R_{\alpha}\right):=\bigoplus_{k=0} R^{\xi}_{k}.
\end{eqnarray*}
Then we take $S={\rm Proj}(\bigoplus_{k=0} R^{\xi}_{k})$. By \cite[Section 4]{Kol04}, $\pi\colon X\setminus \{x\}\rightarrow S$ is a Seifert $\bC^*$-bundle, with the quotient $X\setminus\{x\}\to (S,B_1)$ where $B_1$ is the branch divisor. Write $D=\sum_i a_iD_i$. Since each $D_i$ is $\bC_m$-invariant, $D_i$ is the pull back of a divisor $E_i$ on $S$ and the multiplicity of $D_i$ along $\pi^*(E_i)$ is denoted by $m_i$. Define $B_2=\sum_i\frac{a_i}{m_i}E_i$. Let $B=B_1+B_2$.  Then $\pi^*(K_S+B)=(K_X+D)|_{X\setminus \{x\}}$ since $\pi^*(K_X+B_1)=K_{X\setminus \{x\}}$ (see \cite[Corollary 41]{Kol04}) and $\pi^*(B_1)=D|_{X\setminus \{x\}}$.

The quotient $(S,B)$ is a log Fano variety, because we assume that $(X,D)$ is klt at $x$ (see \cite[42]{Kol04} or \cite[Lemma 3.1]{Kol13}). 
\end{defn}
\begin{defn}[Test configuration]\label{defn-QGTC}
Let $(X, D, \xi_0)$ be a log Fano cone singularity with a torus group $T$ action (see Definition \ref{d-logfanocone}). A $T$-equivariant test configuration (or simply called a test configuration) of $(X, D, \xi_0)$ is a quadruple $(\cX, \cD, \xi_0; \eta)$ with a map $\pi: (\cX, \cD)\rightarrow\bC$ satisfying the following conditions:
\begin{enumerate}
\item[(1)]
$\cX$ is an affine variety and 
$\pi\colon \cX\to \bC$ is a flat family. 
 $\cD$ is a divisor on $\cX$ with ${\rm Supp}(\cD)$ not containing any component of {a} fiber of $\pi$. 
\item[(2)] $\eta$ is a holomorphic vector field that generates a $\bC^*$-action on $(\cX, \cD)$ such that $\pi$ is $\bC^*$-equivariant and $\pi_*\eta=-t\partial_t$. As a consequence, there is an isomorphism $\phi\colon (\cX, \cD)\times_{\bC}\bC^*\cong (X, D)\times \bC^*$.
\item[(3)]
The torus $T$ acts on $(\cX, \cD)$ fiberwise and commutes with the $\bC^*$-action generated by $\eta$, and coincides with the action on the first factor when restricted to $(\cX, \cD)\times_{\bC}\bC^*\stackrel{\phi}{\cong} (X, D)\times \bC^*$. 
\end{enumerate}

A test configuration $(\cX, \cD, \xi_0; \eta)$ is called a product one if there is a $T$-equivariant isomorphism $(\cX, \cD)\cong (X, D)\times \bC$ and $\eta=\eta_0-t\partial_t$ where $\eta_0$ is a coweight vector of $T$ and $t\partial_t$ is the canonical lifting of $t\partial_t$ on $\bC$ through the second projection. In this case, we will denote $(\cX, \cD, \xi_0; \eta)$ by $$(X\times\bC, D\times \bC, \xi_0; \eta)=:(X_\bC, D_\bC, \xi_0; \eta).$$

A normal test configuration $(\cX, \cD, \xi_0; \eta)$ is called $\bQ$-Gorenstein if $K_{\cX}+\cD$ is $\bQ$-Cartier.
\end{defn}
According to the above definition, a test configuration $(\cX, \cD, \xi_0; \eta)$ of the log Fano cone $(X, D, \xi_0)$ is a  $\tilde{T}:=T\times\bC^*$-equivariant degeneration of $(X, D)$ where the $\bC^*$-action is generated by $-\eta$. If it is not a product test configuration, then its central fibre $X_0$ admits an effective $\tilde{T}$-action, whose Lie algebra is generated by the Lie algebra of $T$ and $\eta$.

Moreover if we assume $\cX={\rm Spec}(\cR)$ and decompse $\mathcal{R}=\bigoplus_{\alpha}\mathcal{R}_{\alpha}$ into weight spaces with respect to the fiberwise $T$-action, condition (1) in the above definition implies that each weight piece $\cR_{\alpha}$ is a flat $\bC[t]$-module. As a consequence $X$ and $X_0$ have the same weight cone and Reeb cone with respect to the fiberwise $T$-action. In particular, $\xi_0$ is contained in the Reeb cone of $X_0$ under the $T$-action. 
\begin{rem}
Any test configuration can be $T\times\bC^*$-equivariantly embedded into $\bC^N\times\bC$ (for $N\gg 1$) and our definition is the same as the definition given in \cite[Definition 5.1]{CS12}.
The choice of sign in the  identity $\pi_*(\eta)=-t\partial_t$ in Definition \ref{defn-QGTC} is  compatible with our later arguments and calculations. 
\end{rem}


\Blue{
Because $K_{\cX}+\cD$ is $\bQ$-Cartier, by the structure theory of $T$-varieties, there exists a $T\times\bC^*$-equivariant nowhere-vanishing section $s\in |m(K_\cX+\cD)|$ (see 
\cite[Proposition 4.4]{LS13}, and also \cite[2.7]{MSY08}). For any $b\in \bR$ and $\xi+b\eta\in 
N_{\bR}\oplus \bR$, define:
$$
A(\xi+b\eta):=\frac{1}{m}\frac{\cL_{\xi+b\eta}s}{s},
$$
where $\cL_{\xi+b\eta}$ is the Lie derivative of $s$ with respect to the vector field associated to $\xi+b\eta$. Note that this is a linear function.
}

If $(\cX, \cD, \xi_0; \eta)$ is any $\bQ$-Gorenstein test configuration of an $n$-dimensional log Fano cone $(X, D, \xi_0)$, we will denote:
\begin{equation}\label{eq-normeta}
T_{\xi_0}(\eta)=\frac{A(\xi_0)\eta-A(\eta)\xi_0}{n}.
\end{equation}

\begin{defn}[Weakly special and special test configurations]\label{d-wstc}
In the notations we used before, we define a weakly special test configuration (resp. special test configuration) of $(X, D, \xi_0)$ to be a $\bQ$-Gorenstein test configuration $(\cX, \cD, \xi_0; \eta)$ with central fiber $(X_0, D_0)$ satisfying that:
\begin{enumerate}
\item[(4)] $(\cX,\cD+X_0)$ has log canonical singularities (resp. $(X_0, D_0)$ has klt singularities). 
\end{enumerate} 
In this case, we say that $(X_0, D_0)$ is a weakly special degeneration (resp. special degeneration) of $(X, D)$.
\end{defn}
Note that by inversion of adjunction, being special implies being weakly special.


For simplicity, we will just say that $(\cX, \cD)$ is a $\bQ$-Gorenstein (or weakly special, special) test configuration if $\xi_0$ and $\eta$ are clear. We also say that $(X,D,\xi_0)$ degenerates to $(X_0,D_0,\xi_0)$ (or simply to $(X_0, D_0)$).

\subsubsection*{Test configuration and filtration}

In \cite[Section 2.5]{BHJ15}, a filtration viewpoint for test configurations is developed. Here we will mainly work with data over the vertex of the cone which brings more flexibility when applying the minimal model program.  In this section, we will discuss these ideas and modify them to fit into our context. 
\begin{lem}\label{l-filtration} Given a normal $T$-equivariant  test configuration $(\cX,\cD,\xi_0;\eta)$ of $(X,D)$, we can find a $\mathbb{Z}$-graded sequence of ideals $\{\fa_{\bullet}\}$ of $R$ {(see \eqref{eq-Rweight})} such that 
\begin{enumerate}
\item $\fa_k=R$ for $k\le 0$;
\item $\fa_k$ is a homogeneous ideal for any $k\in \bZ$: $\fa_k =\bigoplus_{\alpha} \fa_k\cap R_{\alpha} $ for any $k \in \bZ$;
\item the extended Rees algebra
$ \cR ees:=\bigoplus_{k\in \mathbb{Z}} t^{-k} \fa_k $ satisfies ${\rm Spec}(\cR ees)=\cX$. 
\end{enumerate}
Moreover, if $\eta$ is in the Reeb cone of $X_0$ with respect to $\tilde{T}=T\times\bC^*$, then $\fa_k$ is primary for $k>0$.
\end{lem}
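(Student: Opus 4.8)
The plan is to adapt the dictionary between test configurations and filtrations from \cite[Section 2.5]{BHJ15}. First I would use the trivialization $\phi$ of Definition \ref{defn-QGTC}(2) to realize the coordinate ring $\cR$ of $\cX$ as a finitely generated $\bC[t]$-subalgebra of $R[t,t^{-1}]=\bC[X\times\bC^*]$ with $\cR[t^{-1}]=R[t,t^{-1}]$. Since $\phi$ is $\langle\eta\rangle=\bC^*$-equivariant and $\pi_*\eta=t\partial_t$, this $\bC^*$-action gets identified with the grading of $R[t,t^{-1}]$ by powers of $t$; in particular $\cR$ is a homogeneous subalgebra for the $t$-grading, and $t\in\cR$ cuts out the central fibre $X_0$. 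I would then define, for each $k\in\bZ$,
\[
\fa_k=\{f\in R:\ t^{-k}f\in\cR\},
\]
which is the same as decomposing $\cR=\bigoplus_k\big(\cR\cap R\,t^{-k}\big)=\bigoplus_k t^{-k}\fa_k$ into its $t$-homogeneous pieces; this identity is exactly property (3), so that $\Spec(\cR)=\cX$ holds by construction.

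Next I would record the structural properties. The nesting $\fa_{k+1}\subseteq\fa_k$ is immediate from $t\in\cR$: if $t^{-k-1}f\in\cR$ then $t\cdot t^{-k-1}f=t^{-k}f\in\cR$. The multiplicativity $\fa_k\cdot\fa_{k'}\subseteq\fa_{k+k'}$ follows from $\cR$ being a ring. For the homogeneity (2), I would use that $T$ commutes with $\eta$, so $\cR$ is graded by the full $T\times\langle\eta\rangle$-action and each $\fa_k$ is a sum of $T$-weight spaces of $R$, i.e. $\fa_k=\bigoplus_\alpha \fa_k\cap R_\alpha$. The heart of the matter is property (1), that $\fa_0=R$ (hence $\fa_k=R$ for all $k\le 0$); combined with multiplicativity this is what upgrades each $\fa_k$ from a mere subspace to a genuine \emph{ideal}, since $\fa_0\cdot\fa_k=R\cdot\fa_k\subseteq\fa_k$. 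This amounts to $R\subseteq\cR$, i.e. to showing that every regular function on the generic fibre $X$ extends regularly across $X_0$. I expect this to be the main obstacle. Here I would invoke the normality of $\cX$: since $X_0=\mathrm{div}(t)$ is a Cartier divisor, a function $f\in R$, which via $\phi$ is regular on $\cX\setminus X_0=\cX\times_\bC\bC^*$, extends to $\cX$ precisely when $\ord_{X_0}(f)\ge 0$, i.e. when the valuation $\wt_\eta=\ord_{X_0}$ associated to the degeneration is nonnegative on $R$. The content is therefore that the given $\phi$ is compatible with the degeneration in the sense that $\cX$ is truly a degeneration \emph{of} $X$, so that $R$ sits in $\cR$ in nonnegative $t$-degree; concretely this is visible from the Remark after Definition \ref{defn-QGTC}, where $\cX$ arises from a weighted blow-up of an ambient $\bC^N$, exhibiting the polynomial ring, and hence $R$, inside $\cR$ in nonnegative degree, while normality is used to rule out poles along $X_0$.

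Finally, for the ``moreover'' clause, suppose $\eta$ lies in the Reeb cone $\ft_\bR^+$ of $X_0$. Then $\wt_\eta=\ord_{X_0}$ is centred at the vertex with strictly positive weights on the maximal ideal, so the associated graded ring $\bigoplus_k \fa_k/\fa_{k+1}=\cR/(t)=\bC[X_0]$ carries a good $\langle\eta\rangle$-action whose unique fixed point is the vertex. Consequently each graded piece $\fa_k/\fa_{k+1}$ is a finite-dimensional weight space supported at $x$, so for $k>0$ the quotient $R/\fa_k\cong\bigoplus_{0\le j<k}\fa_j/\fa_{j+1}$ is finite-dimensional; hence $\fa_k$ is $\fm_x$-primary for $k>0$, as claimed.
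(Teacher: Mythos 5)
Your definition of $\fa_k$ via the trivialization, the verification of (2) and (3), and your finite-dimensionality argument for the ``moreover'' clause all run parallel to the paper's proof (which likewise mimics \cite[Section 2.5]{BHJ15}). The genuine gap is in your treatment of property (1), which you yourself correctly identify as the heart of the matter but then justify unsoundly. You claim that $\fa_0=R$, i.e.\ $R\subseteq\cR$, follows from normality of $\cX$ for an arbitrary test configuration. Normality only converts the extension problem into the inequality $\ord_{X_0}(\bar f)\ge 0$; it does nothing to force that inequality, and in fact it can fail. Take the product test configuration $\cX=X\times\bC$ with $\eta=\eta_0+t\partial_t$, where $\eta_0$ is an algebraic vector field commuting with $\xi_0$ and having both positive and negative weights on $R$ (e.g.\ $X=\bC^2$, $\xi_0=z_1\partial_1+z_2\partial_2$, $\eta_0=z_1\partial_1-z_2\partial_2$); this satisfies every condition of Definition \ref{defn-QGTC}. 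Since the trivialization $\phi$ must intertwine the $\eta$-action on $\cX$ with the action on $X\times\bC^*$ that is trivial on the first factor, it is given by the flow of $\eta_0$, so for $f\in R_\alpha$ one gets $\bar f=t^{-\langle\alpha,\eta_0\rangle}f$; whichever sign convention you take, one of $z_1,z_2$ acquires a pole along $X_0$, hence lies outside $\fa_0$, even though $\cX$ is normal and $X_0=\mathrm{div}(t)$ is Cartier. Property (1) is therefore not a consequence of normality: it is exactly the positivity supplied by the hypothesis that $\eta$ lies in the Reeb cone of $X_0$. This is how the paper argues: its proof derives \emph{both} ``$\fa_k=R$ for $k\le 0$'' \emph{and} ``$\fa_k\subset\bigoplus_{\alpha\neq 0}R_\alpha$ for $k>0$'' from $\langle\eta,\alpha\rangle>0$ for all nonzero weights $\alpha$ of the central fibre, and the surrounding text (Remark \ref{r-primary} and the paragraph before Lemma \ref{l-tc}) shows that in all applications $\eta$ is first twisted so as to land in the Reeb cone.

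Your appeal to the Remark after Definition \ref{defn-QGTC} cannot repair this: a weighted blow-up of $\bC^N$ has positive weights by definition, so invoking it to place $R$ inside $\cR$ in nonnegative $t$-degree presupposes the very positivity of the $\eta$-weights that is in question, and that Remark is not available for mixed-weight $\eta$ without the same twisting. The repair is what the paper does: either assume $\eta$ is in the Reeb cone of $X_0$, or replace $\eta$ by $\eta+\nu$ for a suitable integral coweight $\nu$ (Remark \ref{r-primary}), which leaves $\cX$ unchanged but makes all $\eta$-weights on $\cR/t\cR$ strictly positive away from the constants; then no piece $\fa_k/\fa_{k+1}$ with $k<0$ can occur, which together with $\cR[t^{-1}]=R[t,t^{-1}]$ gives $\fa_k=R$ for $k\le 0$. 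With that hypothesis in place, your ``moreover'' argument (finite-dimensionality of each $\fa_k/\fa_{k+1}$ from properness of the pairing with $\eta$ on the weight monoid, hence $\dim_\bC R/\fa_k<\infty$ and $\fa_k$ primary) is correct and agrees in substance with the paper's.
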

\begin{proof}Recall by the definition of the test configuration, $\cX={\rm Spec}(\cR)$ where $\cR=\bigoplus \cR_{\alpha}$ and each $\cR_{\alpha}$ is a flat $\bC[t]$-module.
For any $f\in \cO_X$, we could denote by $\bar{f}$ its pull back from the first factor of $X\times\bC^*$. Since  $\cX\times_{\bC}\bC^*\cong X\times\bC^*$,  we could  mimic the construction in \cite[Section 2.5]{BHJ15} by defining $\fa_k=\bigoplus_{\alpha}\left\{f\in R_{\alpha} \ | t^{-k} \bar{f} \in \cR_{\alpha}\right\}$, and then we form the extended Rees algebra $\mathcal{R}ees=\bigoplus_{k\in \mathbb{Z}}\fa_kt^{-k}$. We claim $\cR ees$ is finitely genrated.
In fact, by construction, we have the injective morphism $\cR ees\rightarrow \mathcal{R}$. Conversely using the weight decomposition with respect to the $\bC^*$-action, any $F\in \cR_\alpha$ is of the form $\sum_k t^{-k}\bar{f}_k$ for some $\{f_k\}\subset R_\alpha$.  So we have $\cR\cong \cR ees$, which in particular 
implies that 
$\cR ees$ is finitely generated.

Since $\cR$ is a flat $\bC[t]$-algebra, that means $\fa_0=R$ which implies that $\fa_{k}=R$ for $k\le 0$. This is the first property. 
The second property follows from that the $\bC^*$-action generated by $\eta$ commutes with $T$. 

Finally, if $\eta$ is in the Reeb cone, then $\langle \eta, \alpha\rangle >0$ for any $\alpha\in \Gamma\backslash\{0\}$ (see \eqref{eq-Rweight}). Thus for any $\alpha\neq 0$ and $f\in R_{\alpha}$, the order of $f$ vanishing along $(t=0)$ is $\langle \eta, \alpha\rangle >0$, which implies for any $k$, $f^m\in \fa_k$ for $m\gg 0$.
\end{proof}

\begin{rem}\label{r-primary}
Since the Reeb cone with respect to $T$ is open, for any given test configuration, one can always perturb $\xi_0$ to be a rational Reeb vector $\xi'_0\in N_\bQ^+$. Our choice of the sign for $\eta$ with $\pi_*(\eta)=-t\partial_t$ means that  the weight $\eta$ on the function $t$ has weight $1$. For $m\gg 1$ sufficiently divisible, $m \xi'_0+\eta$ is an integral vector in the Reeb cone with respect to $\tilde{T}=T\times\bC^*$. 
 See e.g. Example \ref{exmp-Ad}.
\end{rem}

We give a way of obtaining test configurations using models. \Blue{It generalizes the construction of special test configurations via Koll\'{a}r components as discussed in \cite[2.2.1]{LX17}.}

\begin{exmp}\label{exmp-Ad}
First, we give an example from \cite[Example 7.1.2]{LX16} which will illustrate the construction.
We refer to \cite[2.2.1]{LX17} for more general constructions. \\
Consider the $3$-dimensional $A_d$ singularity $$X=\{z_1^2+z_2^2+z_3^2+z_4^{d+1}=0\}\subset \bC^4  \quad \text{ with }\quad d>3.$$ Set $D=\emptyset$, $\xi_0=\sum_{i=1}^3 (d+1)z_i\partial_{z_i}+2 z_4\partial_{z_4}$ which generates the natural $\bC^*$-action on $X$, and set the vertex to be $x=(0,0,0,0)$. Then $(X,\xi_0)$ is a log Fano cone. 

Consider the filtered blow $\mu: Y\rightarrow X$ which is given by the strict transform of $X$ under the weighted blowup of $\bC^4$ with weights $(2,2,2,1)$. The exceptional divisor is given by $E=\{Z_1^2+Z_2^2+Z_3^2=0\}\subset \bP(2,2,2,1)\cong \bP(1,1,1,1)=\bP^3$. We see that $E\cong \bP(1,1,2)$ with the different $\Delta_E=\frac{1}{2}D_\infty$ where $D_\infty=\{Z_4=0\}\cap E=\bP^2\cap E$. Note that $E$ is a Koll\'{a}r component.

We can also consider the special test configuration given by $$\cX=\{(t; z_1, z_2, z_3, z_4); z_1^2+ z_2^2+ z_3^2+t^{d-3} z_4^{d+1}=0\}\subset \bC^4\times\bC,$$ and 
$\eta=-t\partial_t+2\sum_{i=1}^3 z_i\partial_{z_i}+z_4\partial_{z_4}$ generates the $\bC^*$-action $$(\tau, (t; z_1, z_2, z_3, z_4))\mapsto (\tau^{-1} t, \tau^2 z_1, \tau^2 z_2, \tau^2 z_3, \tau z_4).$$ Since we assume $d>3$, the central fibre $X_0=\{z_1^2+z_2^2+z_3^2=0\}\subset \bC^4$ is isomorphic to $(\bC^2/\bZ_2)\times\bC$ and admits a $(\bC^*)^2$-action generated by $\xi_0$ and $\eta$.
Note that $\eta|_{X_0}$ is in the Reeb cone of $X_0$ with respect to $(\bC^*)^2$ and 
$$(E, \Delta_E)=(X_0\setminus\{0\})/ (\bC^*=\la \eta\ra).$$
 Moreover in this example the log Fano pair $(E, \Delta_E)$ admits an orbifold K\"{a}hler-Einstein metric and is hence K-polystable. So by \cite{LX16, LX17, XZ20}, $\ord_E$ is the unique minimizer of $\hvol_{X,x}$.
\end{exmp}

\begin{defn}\label{d-model}Let $(X,D,\xi_0)$ be a log Fano cone singularity. 
Let $\mu\colon Y\to X$ be a $T$-equivariant proper birational morphism from a normal model $Y$, which is an isomorphism outside $X\setminus \{x\}$ with a $T$-equivariant integral Weil divisor $E$ supported on ${\rm Ex}(\mu)$ such that $-E$ is ample. Denote by $\cR:= \bigoplus_{k\in \mathbb{Z}} t^{-k} \fb_k$, where $\fb_k=\mu_*(\mathcal{O}_Y(-kE))$. 

Then $(\cX,\cD,\xi_0;\eta)$ is a test configuration associated to the model $\mu: Y\rightarrow X$, where $\cX:={\rm Spec}(\cR)$ and $\cD$ is the cycle (with $\mathbb{Q}$-coefficients) degeneration of $D$. More precisely, if we write $D=\sum a_iD_i$, where $D_i$ are prime divisors with the corresponding ideal $I_{D_i}$, then we can define $\cD_i$ on $\cX$ to be the divisor corresponding to the ideal $\mathcal{I}_{\cD_i}:=\bigoplus_{k\in \mathbb{Z}}(\fb_k\cap I_{D_i})t^{-k}\subset \mathcal{R}$, and let $\cD=\sum a_i\cD_i$.
\end{defn}

Conversely, starting with a normal test configuration $(\cX,\cD, \xi_0; \eta)$ and assuming $\eta$ is in the Reeb cone of $X_0$, we take the primary ideals $\fa_k$ as in Lemma \ref{l-filtration}, and then take the normalized filtered blow up (see \cite[Chapter 1]{TW89} for the definition) $\mu\colon Y\to X$ induced by $\fa_{\bullet}=\{\fa_{k}\}_{k\in \mathbb{Z}}$ with an exceptional divisor $E$.  

\begin{lem}\label{l-tc}The above two constructions give equivalence between normal test configurations  $(\cX,\cD, \xi_0; \eta)$ with $\eta$ in the Reeb cone  and models $\mu\colon Y\to X$ satisfying the conditions in Definition \ref{d-model}. 
Moreover, 
\begin{enumerate}
\item  $(\cX,\cD, \xi_0; \eta)$ is a special test configuration if and only if $\mu\colon Y\to X$ yields a Koll\'ar component; and
\item $(\cX,\cD, \xi_0; \eta)$ is weakly special if and only if $(Y,E+\mu_*^{-1}D)$ is log canonical.
\end{enumerate}
\end{lem}
\begin{proof}If we start with a normal test configuration $(\cX,\cD, \xi_0; \eta)$, then we get a graded sequence of primary ideals $\{\fa_{\bullet}\}$ by Lemma \ref{l-filtration}. 
If we take the filtered blow up of $\{\fa_{\bullet}\}$ and get $E$ as above, then we claim it is normal and the algebra $\{\fb_k=\mu_*(\mathcal{O}_Y(-kE))\}$ is the same as the algebra $\bigoplus_{k=0}\fa_k$.

In fact, $\bigoplus_{k=0}\fa_k\subset\bigoplus_{k=0}\fb_k$ is a subalgebra, but the latter is integral over the former.
Thus it suffices to verify that the $R$-algebra  $\bigoplus_{k=0}\fa_k$ is integrally closed. 
Similar to  the proof of \cite[9.6.6]{Laz04}, this follows from the fact that  to check whether a function $f$ is contained in $\fa_k$ suffices to only check it at the divisorial valuation along the the special fiber $X_0$. More precisely, let the special fiber $X_0=\sum m_iE_i$ where $E_i$ are the prime divisors, then
\begin{eqnarray*}
& &\mbox{a homogeneous element  } f\in \mbox{ the normal closure } \overline{\bigoplus_{k=0}\fa_k} \\
&\Leftrightarrow& \mbox{$f$ satisfies an equation } f^m+a_1f^{m-1}+\cdots+a_m=0 \mbox{ with }a_i\in \fa_{ik},
\end{eqnarray*}
which implies the vanishing order of $f$ along $E_i$ is at least $km_i$ as the element in $\fa_j$ have vanishing order along $E_i$ at least $jm_i$ by the definition. Then we conclude $f\in \fa_k$. 

If we start with a normal model $\mu\colon Y\to X$ and $E$ as in Definition \ref{d-model}, then $\bigoplus_{k=0}\fb_k$ is a normal algebra where $\fb_k=\mu_*(\mathcal{O}_Y(-kE))$, then we can easily show the Rees algebra $\bigoplus_{k\in \bZ}t^{-k}\fb_k$ is normal, thus the induced test configuration $(\cX,\cD, \xi_0; \eta)$ is normal. If we take the filtered blow up then $Y\cong {\rm Proj}(\bigoplus_{k=0}\fb_k)$ as $-E$ is ample, and the divisor ${\rm Proj}(\bigoplus_{k=0}\fb_k/\fb_{k+1})\subset Y$ yields $E$.

\medskip 

To prove the second part of the statement, let $v\colon \mathbb{A}^1_{\bC}\subset \cX$ corresponds to the section of vertices. Consider the $\bC^*$-action given by $\eta$ in the data of the test configuration, then 
$(\cS={\rm Proj}_{\bC[t]}\cR ees,\mathcal{B})$ is the base of the $\bC^*$-quotient of $(\cX\setminus v(\mathbb{A}^1_{\bC}),\cD\setminus v(\mathbb{A}^1_{\bC}))$ as a Seifert bundle (see \cite{Kol04}), i.e., we remember the codimension one orbifold structure and put it into $\mathcal{B}$. Over the special fiber, we have 
$$S_0\cong {\rm Proj}\bigoplus_{k=0}\fa_{k}/\fa_{k+1}\cong {\rm Proj}\bigoplus_{k=0} \fb_{k}/\fb_{k+1}\cong E.$$
If $(\cX,\cD+\cX_0)$ is log canonical, then $X_0$ is reduced and $(X_0,D_0)$ is semi-log-canonical. Thus $E=S_0$ is reduced and $(S_0, B_0:=\mathcal{B}|_{S_0})$ is semi-log-canonical. Moreover, if we write $K_E+D_E=(K_Y+E+\mu_*^{-1}D)|_E$, then $D_E$ is sent to $B_0$ under the isomorphism between $E$ and $S_0$.
 Thus by inversion of adjunction, $(Y,E+\mu_*^{-1}D)$ is log canonical.

For the converse, assume $(E,D_E)$ is log canonical, it suffices to show that $\fb_k/\fb_{k+1}=H^0(E,\mathcal{O}_E(-kE))$ for any positive integer $k$, as this implies that $(X_0,D_0)$ is the orbifold cone over $(E,D_E)$ induced by the ample ($\bQ$-Cartier) integral Weil divisor $-E$. First, since  the test configuration is $\mathbb{Q}$-Gorenstein,  $-K_{E}-D_E\sim_{\bQ} \lambda E|_E$ for some $\lambda>0$ . Therefore, $E$ and $K_Y+E+\mu_*^{-1}D$ are anti-ample over $X$,
$$-(k+1)E=K_Y+E+\mu_*^{-1}D-(k+1)E-{(}K_Y+E+\mu_*^{-1}D), $$ we conclude that $R^1\mu_*\cO_Y(-(k+1)E)=0$ by the Kawamata-Viehweg Vanishing Theorem, then we can apply $\mu_*$ to the following exact sequence
\[
0\to \mathcal{O}_Y(-(k+1)|_E)\to \mathcal{O}_Y(-kE)\to \mathcal{O}_E((-kE)|_E)\to 0.
\]
  If we specialize the argument to the plt case, we obtain that $E$ is indeed a Koll\'ar component. 
\end{proof}




\subsubsection*{Generalized Futaki invariants and K-stability}

We define the generalized Futaki invariant for $\mathbb{Q}$-Gorenstein test configuration using the volume function. One can easily show this definition is the same as the one in \cite{CS12}. However, the formula in Definition \ref{d-Fut} is more convenient to use for the current paper. 

\begin{defn}[Generalized Futaki invariant]\label{d-Fut}
For any $\mathbb{Q}$-Gorenstein test configuration $(\cX, \cD, \xi_0; \eta)$ of $(X,D,\xi_0)$ {with the central fibre denoted by} $(X_0, D_0, \xi_0)$, its generalized Futaki invariant is defined as
{\begin{eqnarray*}
\Fut(\cX, \cD, \xi_0; \eta):=\frac{D_{T_{\xi_0}(\eta)}\vol_{X_0}(\xi_0)}{\vol_{X_0}(\xi_0)},
\end{eqnarray*}
where we used $T_{\xi_0}(\eta)$ in \eqref{eq-normeta} and the directional derivative
$$D_{T_{\xi_0}(\eta)}\vol_{X_0}(\xi_0):=\left.\frac{d}{d\epsilon}\right|_{\epsilon=0}\vol_{X_0}(\xi_0+\epsilon T_{\xi_0}(\eta)).$$
Since generalized Futaki invariant defined above only depends on the data on the central fiber, we will also denote it by $\Fut(X_0, D_0, \xi_0; \eta)$.}
\end{defn}

Next, we will introduce the notions of K-stability. We note that in the definition, we only look at special test configurations, in the spirit of \cite{Tia97}. 

\begin{defn}[K-stability]\label{d-semicone}
We say that $(X, D, \xi_0)$ is K-semistable, if for any special test configuration $(\cX, \cD, \xi_0; \eta)$, we have $\Fut(\cX, \cD, \xi_0; \eta)$ is nonnegative.

We say that $(X, D, \xi_0)$ is K-polystable, if it is K-semistable, and any special test configuration $(\cX, \cD, \xi_0; \eta)$ with $\Fut(\cX, \cD, \xi_0; \eta)=0$ is a product test configuration. 
\end{defn}

If $(\cX, \cD, \xi_0; \eta)$ is a special test configuration, we know $A(\xi_0)=A_{(X_0,D_0)}(\wt_{\xi_0})>0$. Then we see the following identity holds:
\begin{equation}\label{eq-dirhvol}
D_{T_{\xi_0}(\eta)}\vol_{X_0}(\xi_0)=\left.\frac{d}{d\epsilon}\right|_{\epsilon=0}\hvol_{X_0}(\wt_{\xi_0+\epsilon\eta})\cdot \frac{1}{n A(\xi_0)^{n-1}},
\end{equation}
where we use the rescaling invariance of the normalized volume and $A(\xi_0)=A(\xi_0+t\cdot T_{\xi_0}(\eta))$ for $t\ll1$  (see \eqref{eq-normeta})
As a consequence, we can rewrite the Futaki invariant of a special test configuration in the following way:
\begin{equation}\label{eq-Futhvol}
\Fut(\cX, \cD, \xi_0; \eta):=D_{\eta}\hvol_{X_0}(\wt_{\xi_0})\cdot \frac{1}{n A(\xi_0)^{n-1} \cdot \vol_{X_0}(\xi_0)}.
\end{equation}
This shows that it differs from the one in \cite[Definition 2.26]{LX17} by a positive constant. It also differs from Collins-Sz\'{e}kelyhidi's definition by a constant.

\begin{rem}Obviously to define the K-stability notions, we can also consider more general test configurations than the special ones. 
In {\em \cite{LX14}} we proved that for the K-stability of log Fano varieties, to test on all test configurations is equivalent to only test on special test configurations.

 For log Fano cone singularities, results like {\em \cite{LX14}} 
are not completely known.  Nevertheless, later in this paper, we have to deal with weakly special test configurations, as they will naturally appear in our argument. Thus we need to prove a statement (see Proposition \ref{p-specialcone}) similar to  \cite[Theorem 4]{LX14}, which says  that for log Fano cone singularities, our definition of K-semistability is also equivalent to test on all weakly special test configurations. 
 
 Compared to the other literatures, all  test configurations are considered in \cite{CS12}, whereas in \cite{CS15, LX17} K-stability notions are only tested on special test configurations. 
\end{rem}

We will need the following simple fact, which follows from the definition of the generalized Futaki invariant applied to product test configurations:
\begin{lem}\label{lem-Futlinear}
Assume that the log Fano cone $(X, D, \xi_0)$ admits a torus action by $T'\cong (\bC^*)^{r'}$ that commutes with $\la \xi_0 \ra$. Let $N'$ be the coweight lattice of $T'$.
Then the function 
$$\eta\mapsto \Fut(X_\bC, D_\bC, \xi_0; \eta)$$ 
is linear with respect to $\eta\in N'_{\mathbb {R}}$. 
\end{lem}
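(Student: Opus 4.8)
The plan is to unwind the definition of $\Fut$ on a product test configuration and observe that its $\eta$-dependence factors through a constant, a fixed linear functional, and a linear map, so that linearity is forced. First I would fix the ambient torus: since $T'$ commutes with $\la\xi_0\ra$, the subgroup $T:=\langle T',\la\xi_0\ra\rangle$ is a torus with a good action on $(X,D)$, so that $\frak{t}'\subset \frak{t}_\bR=N_\bR$ and $\xi_0\in\frak{t}_\bR^+$. For any $\eta\in\frak{t}'$ the associated product test configuration $(X_\bC,D_\bC,\xi_0;\eta+t\partial_t)$ has central fibre $(X,D)$, which is klt, so it is a special test configuration and $\Fut$ is defined. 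By Definition \ref{d-ksemiSE}, using $X_0=X$ for product configurations,
$$\Fut(X_\bC,D_\bC,\xi_0;\eta)=\frac{D_{-T_{\xi_0}(\eta)}\vol_X(\xi_0)}{\vol_X(\xi_0)}.$$

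Next I would isolate the three ingredients. The denominator $\vol_X(\xi_0)$ is a fixed positive number independent of $\eta$. The assignment $v\mapsto D_v\vol_X(\xi_0)=\frac{d}{d\epsilon}\big|_{\epsilon=0}\vol_X(\xi_0-\epsilon v)$ is $\bR$-linear in $v\in\frak{t}_\bR$: since $\vol_X$ is smooth on the open cone $\frak{t}_\bR^+$ (by the smoothness lemma recalled above), this is simply the pairing of $v$ with the differential $d\vol_X(\xi_0)$ at the fixed interior point $\xi_0$, and openness of the cone guarantees $\xi_0-\epsilon v\in\frak{t}_\bR^+$ for $|\epsilon|\ll 1$. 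Finally, $\eta\mapsto T_{\xi_0}(\eta)=\frac{A(\xi_0)\eta-A(\eta)\xi_0}{n}$ is $\bR$-linear in $\eta$, because $A(\xi_0)$ and $n$ are constants while the log-discrepancy functional $A(\cdot)$ is itself linear, as recorded in the discussion preceding Lemma \ref{l-normalize} via $A_{(\cX,\cD)}(\wt_{\xi_\epsilon})=\tfrac{1}{m}\tfrac{\cL_{\xi_\epsilon}s}{s}$ and the linearity of the Lie derivative in the vector field.

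Composing the $\bR$-linear map $\eta\mapsto -T_{\xi_0}(\eta)\in\frak{t}_\bR$ with the linear functional $v\mapsto D_v\vol_X(\xi_0)$ and dividing by the constant $\vol_X(\xi_0)$ yields an $\bR$-linear function of $\eta$, which is the assertion. I do not expect a substantive obstacle here: the content is formal once the definitions are unwound, and the normalization $T_{\xi_0}$ only shears $\eta$ by a linear multiple of $\xi_0$, which does not break linearity. The one point demanding care is the bookkeeping of the product structure — checking that the restriction of $\eta+t\partial_t$ to the central fibre $X\times\{0\}$ is precisely $\eta\in\frak{t}'$ (the $t\partial_t$ term vanishes at $t=0$), so that every quantity entering $\Fut$ is computed intrinsically on $(X,\xi_0)$ and the constancy $\vol_{X_0}(\xi_0)=\vol_X(\xi_0)$ is legitimate.
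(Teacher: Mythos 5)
Your proof is correct and follows exactly the route the paper intends: the paper states Lemma \ref{lem-Futlinear} without proof, as an immediate consequence of applying the definition of the generalized Futaki invariant to product test configurations, and your unwinding — constancy of $\vol_X(\xi_0)$, linearity of $v\mapsto D_v\vol_X(\xi_0)$ from smoothness of $\vol_X$ on the open Reeb cone, and linearity of $\eta\mapsto T_{\xi_0}(\eta)$ via the $T$-equivariant pluricanonical section giving $A(\wt_\xi)=\frac{1}{m}\frac{\cL_{\xi}s}{s}$ — supplies precisely the details the paper leaves implicit. Your closing bookkeeping point, that the central fibre of the product configuration is $(X,D)$ carrying the vector field $\eta$ itself (the $t\partial_t$ term vanishing at $t=0$), is also consistent with the paper's remark that $\Fut$ depends only on the data of the central fibre.
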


\subsubsection*{Log Fano varieties}

In the below, we will specialize previous definitions to the case of quasi-regular log Fano cones, which correspond to Fano projective varieties. 

\begin{defn}
Assume $(S, B)$ is a log Fano variety.
A test configuration of $(S, B, -(K_S+B))$ is a quadruple $(\cS, \mathcal{B}, \cL; \eta)$ with a map $\pi: (\cS, \mathcal{B})\rightarrow \bC$ that satisfies the following conditions: 
\begin{enumerate}
\item[(1)] $\cL$ is a $\pi$-ample $\bQ$-line bundle and $\pi\colon \cS\to \bC$ is a flat family and ${\rm Supp}(B)$ does not contain any component of the fiber. We denote the central fiber by $(S_0, B_0, L_0)$.
\item[(2)] There is a $\bC^*$-action (with coweight $\eta$) on $(\cS, \mathcal{B})$ such that $\pi$ is $\bC^*$-equivariant where $\bC^*$ acts on the base $\bC$ by multiplication and there is a $\bC^*$-equivariant isomorphism $\phi: (\cS, \mathcal{B}, \cL)\times_{\bC}\bC^*\cong (S, B, -(K_S+B))\times\bC^*$, where $\bC^*$-trivially acts on the first factor of $ (S, B, -(K_S+B))\times\bC^*$.
\end{enumerate}
Most of the time, as in the literature, we omit $\eta$ in the quadruple and simply denote the test configuration by $(\cS, \mathcal{B}, \cL)$.

Such a test configuration is called $\bQ$-Gorenstein if $\cS$ is normal, 
$$K_\cS+\mathcal{B} \mbox {  is $\bQ$-Cartier \ \ \ and \ \ \ }  \cL\sim_{\mathbb{Q}}-(K_{\cS}+\mathcal{B}).$$ In this case, we usually just write the test configuration as
$(\cS, \mathcal{B}; \eta)$ or simply as $(\cS, \mathcal{B})$.

A $\bQ$-Gorenstein test configuration is  called special if $(S_0, B_0)$ is a log Fano pair with klt singularities. In this case, we say that $(S_0, B_0)$ is a special degeneration of $(S, B)$.

A test configuration $(\cS, \mathcal{B}, \cL; \eta)$ is called a product one  if there is an isomorphism 
\[
(\cS, \mathcal{B}, \cL)\cong (S, B, -(K_S+B))\times \bC \qquad\mbox{ such that }\eta=\eta_0+t\partial_t
\] 
where $\eta_0$ is a coweight vector on some torus group $T$ acting on $(S,B)$ and $t\partial_t$ is the coweight corresponding to the $\mathbb{C}^*$ factor. In this case, we will denote $(\cS, \mathcal{B}, \cL; \eta)$ simply by $(S_\bC, B_\bC; \eta).$

\end{defn}

For a test configuration of a log Fano variety, by trivially adding a copy over $\{\infty\}$,  we can take the intersection formula (see \cite{Wan12, Oda13}) of the generalized Futaki invariant as the definition.  
 More precisely, for any test configuration of $(\mathcal{S},\mathcal{B})$, we can glue it with a trivial family of $(S,B)\times \bP^1\setminus\{0\}$ along $(S,B)\times \bC^*$ to get $(\bar{\mathcal{S}},\bar{B})$ over $\bP^1$ and denote by $\bar{\cL}\sim_{\mathbb{Q}}-(K_{\bar{\cS}/\bP^1}+\mathcal{B})$.

\begin{defn}[Generalized Futaki invariants]\label{d-Fut2}
For any $\mathbb{Q}$-Gorenstein test configuration $(\cS, \mathcal{B}, \cL; \eta)$ of $(S, B)$, 
we define the generalized Futaki invariant 
\begin{eqnarray*}
\Fut(\cS, \mathcal{B}; \eta):=-\frac{\bar{\cL}^{\cdot n}}{n (-(K_S+B))^{\cdot n-1}} \ \ \ \ \mbox{where $n=\dim S+1$.}
\end{eqnarray*}
\end{defn}

By the intersection formula  (see \cite{Wan12, Oda13}), the above definition of the generalized Futaki invariants coincides with the one in \cite{Don02}. 
\begin{defn}[{K-stability, see \cite{Tia97, Don02, LX14}}]\label{d-ksemiSE}
We say that $(S, B)$ is K-semistable, if 
the generalized Futaki invariant $\Fut(\cS, \mathcal{B}; \eta)$ is nonnegative for any special test configurations. 
We say that $(S, B)$ is K-polystable, if it is K-semistable, and any special test configuration $(\cS, \mathcal{B}, \cL; \eta)$ with $\Fut(\cS, \mathcal{B}, \cL; \eta)=0$ is a product test configuration. 
\end{defn}
\begin{rem}
We choose to work specifically on $\mathbb{Q}$-Gorenstein test configurations $(\cS, B, \cL)$, since it fits into our study on log Fano cones. By \cite{LX14}, we know for a log Fano variety, working on this intermediate generality of test configurations yields the same stability notions as working either only on special test configurations or on all test configurations. 
\end{rem}

Given a $\bQ$-Gorenstein test configuration $(\cS, \mathcal{B}, \cL; \eta)$, by choosing $\lambda$ such that $\lambda(K_{\cS}+\mathcal{B})$ is Cartier, 
we can get a $\bQ$-Gorenstein test configuration $(\cX, \cD, \xi_0; \eta)$ of $(X,D):=C(S, B, -\lambda(K_S+B))$ by letting 
$(\cX, \cD)=C(\cS, \mathcal{B}; -\lambda\cL)$, $\xi_0=u\partial_u$ the canonical rescaling vector on $\cX$ where $u$ is an affine coordinate on the line bundle $\lambda\cL$, and letting $\eta$ also denote its canonical lifting from $\cS$ to $\cX$ that corresponds to the pull back of pluri-log-canonical forms (see \cite[Page 3186-3187]{Li17}. 
\Blue{
\begin{lem}[{see \cite[Theorem 4]{CS12} and \cite[Lemma 6.20]{Li17}}]
Notations as above. 
If $(\cS,\mathcal{B}; \eta)$ is a $\mathbb{Q}$-Gorenstein test {configuration}, 
then $$\Fut(\cS, \mathcal{B}; \eta)=\Fut(\cX, \cD, \xi_0; \eta).$$
\end{lem}
\begin{proof}
With the above choice of $\xi_0$, $A(\xi_0)=\lambda^{-1}$. 
Since $\eta$ is the canonical lifting, we have $A(\eta)=0$ so that $T_{\xi_0}(\eta)=\frac{A(\xi_0)}{n}\eta=\frac{\lambda^{-1}}{n}\eta$ (see \eqref{eq-normeta}). So we get:
\[
D_{T_{\xi_0}(\eta)}\vol_{X_0}(\xi_0)=\frac{\lambda^{-1}}{n}
\left.\frac{d\vol_{X_0}(\xi_0+t\eta)}{dt}\right|_{t=0}=\lambda^{-1} \lim_{m\rightarrow +\infty} \frac{w_m}{m^n/(n-1)!}=-\frac{\lambda^{n-1} \bar{\cL}^{\cdot n}}{n},
\]
where $w_m$ is the weight of the $\la\eta\ra$ action on $H^0(S_0, -m \lambda (K_{S_0}+B_0))$. 
The second identity follows from \cite[Theorem 4]{CS12} (see also the calculation in \cite[Proof of Lemma 6.20]{Li17}). For the last identity, see
\cite[Theorem 5.3]{BHJ15}. Dividing both sides by $\vol_{X_0}(\xi_0)=\lambda^{n-1} (-(K_{S}+B))^{\cdot n-1}$ we get the identity.
\end{proof}
The above lemma says that the definition \ref{d-Fut2} is compatible with the generalized Futaki invariants for log Fano cones in Definition \ref{d-Fut}. Thus Definition \ref{d-semicone} specializes to Definition \ref{d-ksemiSE}.  
}
It is well known that if we have a product test configuration induced by a vector field coming from a $\mathbb{C}^*$-action on $(S, B)$, then the generalized Futaki invariant defined above becomes the classical Futaki invariant. It also follows from Lemma \ref{lem-Futlinear} that
\begin{lem}\label{lem-Futlinear2}
Assume a log Fano variety $(S, B)$ admits a torus action by $T\cong (\bC^*)^{r}$. Let $N$ be the coweight lattice of $T$. Then the Futaki invariant $\eta\mapsto \Fut(S\times \bC, B\times \bC; \eta)$ is linear with respect to $\eta\in N_{\bR}$.
\end{lem}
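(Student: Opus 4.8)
The plan is to reduce to the log Fano cone case and invoke Lemma \ref{lem-Futlinear}, using the correspondence between test configurations of $(S,\Delta)$ and of the cone $(X,D)=C(S,\Delta,-\lambda(K_S+\Delta))$ recorded just above, together with the identity $\Fut(\cS,\til{\Delta};\eta)=\Fut(\cX,\cD,\xi_0;\eta)$ for special test configurations. Since linearity of the Futaki invariant over the Lie algebra of a torus has already been proved on the cone, the only genuine task is to transport the $T'$-action and the product test configurations faithfully from $(S,\Delta)$ to $(X,D,\xi_0)$.

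First I would lift the $T'$-action to the cone. As $T'\cong(\bC^*)^{r'}$ is connected and acts on the pair $(S,\Delta)$, the $\bQ$-line bundle $-\lambda(K_S+\Delta)$ carries a canonical $T'$-linearization (automorphisms act functorially on $\omega_S$, and $T'$ preserves $\Delta$), so $T'$ acts on the total space of $-\lambda(K_S+\Delta)$ and hence on the affine cone $X$, preserving $D$ and commuting with the fibrewise rescaling $\bC^*=\la\xi_0\ra$. This produces a linear map $\ft'\to\ft'_{\mathrm{cone}}$, $\eta\mapsto\til\eta$, carrying each $\eta\in\ft'$ to its canonical lift on $X$, with $\til\eta$ commuting with $\xi_0$; additivity of $\eta\mapsto\til\eta$ is immediate from the construction.

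Next, for $\eta\in\ft'$ the associated product test configuration $(S_\bC,\Delta_\bC;\eta)$ of $(S,\Delta)$ corresponds under the cone construction to the product test configuration $(X_\bC,D_\bC,\xi_0;\til\eta)$ of $(X,D,\xi_0)$, a special test configuration with central fibre $(X,D,\xi_0)$. The identity quoted above then gives
\[
\Fut(S_\bC,\Delta_\bC;\eta)=\Fut(X_\bC,D_\bC,\xi_0;\til\eta),
\]
and Lemma \ref{lem-Futlinear} tells us that $\til\eta\mapsto\Fut(X_\bC,D_\bC,\xi_0;\til\eta)$ is linear on $\ft'_{\mathrm{cone}}$. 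Precomposing with the linear lift $\eta\mapsto\til\eta$ yields linearity of $\eta\mapsto\Fut(S_\bC,\Delta_\bC;\eta)$ on $\ft'$, which is the assertion.

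The only delicate point, and the step I would be most careful about, is the lifting: one must verify that the canonical linearization of $-\lambda(K_S+\Delta)$ really does furnish a torus action on $X$ commuting with $\xi_0$, so that the lifts $\til\eta$ all lie in one Reeb-compatible torus and $\eta\mapsto\til\eta$ is linear. Once this is in place the argument is formal. Should I wish to bypass the reduction, I can argue exactly as in Lemma \ref{lem-Futlinear} directly: writing $\Fut(X_\bC,D_\bC,\xi_0;\til\eta)=D_{-T_{\xi_0}(\til\eta)}\vol_X(\xi_0)/\vol_X(\xi_0)$, the field $T_{\xi_0}(\til\eta)=\tfrac{1}{n}\big(A(\xi_0)\til\eta-A(\til\eta)\xi_0\big)$ is linear in $\til\eta$ since $A(\cdot)$ is linear, while $v\mapsto D_v\vol_X(\xi_0)$ is the differential of the smooth function $\vol_X$ at $\xi_0$ and so is linear, and division by the constant $\vol_X(\xi_0)$ preserves linearity.
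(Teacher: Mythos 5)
Your proposal is correct and follows essentially the paper's own route: the paper justifies this lemma via the cone correspondence set up in the immediately preceding paragraph (the canonical lift of $\eta$ to $\lambda\cL=-\lambda(K_{\cS}+\til{\Delta})$, with $A(\xi_0)=\lambda^{-1}$, $A(\eta)=0$ and $\Fut(\cS,\til{\Delta};\eta)=\Fut(\cX,\cD,\xi_0;\eta)$) together with the remark ``same as Lemma \ref{lem-Futlinear}'', whose content is precisely the direct computation you give at the end (linearity of $T_{\xi_0}(\cdot)$ from linearity of $A$, plus linearity of the directional derivative of the smooth function $\vol$ at $\xi_0$). The extra care you take with the $T'$-linearization of $-\lambda(K_S+\Delta)$ is a detail the paper subsumes into its canonical-lift construction, not a divergence.
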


\Blue{\section{Case of log Fano pairs}\label{sec-logpair}
In this section, we will focus on the stability of log Fano pairs.  More concretely we will construct a common degeneration of two K-semistable degenerations of a log Fano variety, as well as investigate the equivariant K-stability for a torus action. Even in this case of log Fano varieties, we find it more flexible to work on the associated log Fano cones in order to use a combination of techniques from the minimal model program and results on normalized volumes.} The study will be generalized to log Fano cones later. However, we believe that treating the case of log Fano pairs first will help the reader to more easily get the main idea.  


\subsection{K-semistable degeneration of K-semistable log Fano pair}\label{s-Qfano}
\def\Fut{\mathrm{Fut}}
\def\hilb{\mathrm{Hilb}}
\def\SL{\mathrm{SL}}
\def\GG{\mathbb{G}}
\def\PP{\mathbb{P}}
\def\la{\langle}
\def\ra{\rangle}

We will need the following lemma which allows us to reduce a two-step equivariant degeneration to a single equivariant degeneration. The idea of its proof  is similar to the one used in \cite[Section 6]{LX16}. In fact, the proof is a mimic of the argument in the classical GIT situation, but replacing Kempf's instability theorem \cite[Corollary 4.5]{Kempf78} by \cite[Theorem 1.4]{LX16}.
\begin{lem}\label{l-semistable}
Let $(\mathcal{S},\mathcal{B}, \eta)$ be a special test configuration of a K-semistable log Fano variety $(S, B)$ with central fiber $(S_0, B_0)$. Suppose  that ${\rm Fut}(\cS,\mathcal{B})=0$. Then $(S_0, B_0)$ is a K-semistable log Fano variety.
\end{lem}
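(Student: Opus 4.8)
The plan is to verify the K-semistability of $(S_0,\Delta_0)$ directly from the definition: given an \emph{arbitrary} special test configuration $(\cS',\til{\Delta}';\eta')$ of $(S_0,\Delta_0)$ with central fibre $(S_{00},\Delta_{00})$, I would show $\Fut(\cS',\til{\Delta}')\ge 0$. The key idea is to splice the given degeneration $(\cS,\til{\Delta};\eta)$ of $S$ with the new degeneration $(\cS',\til{\Delta}';\eta')$ of $S_0$ into a \emph{single} special test configuration of $(S,\Delta)$ whose central fibre is again $(S_{00},\Delta_{00})$, and then to read off its generalized Futaki invariant using $\Fut(\cS,\til{\Delta})=0$. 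This is the promised reduction of a two-step equivariant degeneration to a single one, following the scheme of \cite[Section 6]{LX16}.

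First I would work on the anticanonical ring $R=\bigoplus_m H^0(S,-m(K_S+\Delta))$. The first test configuration is encoded by the $\eta$-weight grading, whose associated graded ring is the anticanonical ring $R_0$ of $(S_0,\Delta_0)$; the second is encoded by an $\eta'$-filtration of $R_0$ with associated graded the anticanonical ring $R_{00}$ of $(S_{00},\Delta_{00})$. For an integer $N\gg 1$ the combined grading of weight $N\eta+\eta'$ on $R$ has associated graded ring equal to the iterated one $\gr_{\eta'}\gr_{\eta}R=R_{00}$, and this is a genuine finitely generated $\bZ$-grading, hence defines a test configuration $(\cS_N,\til{\Delta}_N;N\eta+\eta')$ of $(S,\Delta)$. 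Since its central fibre is $(S_{00},\Delta_{00})$, which is klt by specialness of the second test configuration, $(\cS_N,\til{\Delta}_N)$ is a special test configuration.

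Next comes the Futaki bookkeeping. Because $R_{00}=\gr_{\eta'}\gr_{\eta}R$ is automatically bigraded, $(S_{00},\Delta_{00})$ carries an action of the torus $\langle\eta,\eta'\rangle$, so both $\eta$ and $\eta'$ restrict to product-type vector fields on $S_{00}$. By the linearity of the generalized Futaki invariant (Lemma \ref{lem-Futlinear2}),
\[
\Fut(\cS_N,\til{\Delta}_N)=\Fut(S_{00},\Delta_{00};N\eta+\eta')=N\,\Fut(S_{00},\Delta_{00};\eta)+\Fut(S_{00},\Delta_{00};\eta').
\]
Now passing to $\gr_{\eta'}$ only reshuffles each $\eta$-weight space and therefore preserves all $\eta$-weight multiplicities; consequently the Futaki invariant in the direction $\eta$ is unchanged under the degeneration $S_0\rightsquigarrow S_{00}$, giving $\Fut(S_{00},\Delta_{00};\eta)=\Fut(S_0,\Delta_0;\eta)=\Fut(\cS,\til{\Delta})=0$. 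Hence $\Fut(\cS_N,\til{\Delta}_N)=\Fut(\cS',\til{\Delta}')$ for all $N\gg1$. Applying K-semistability of $(S,\Delta)$ to the special test configuration $(\cS_N,\til{\Delta}_N)$ yields $\Fut(\cS',\til{\Delta}')=\Fut(\cS_N,\til{\Delta}_N)\ge 0$, and since $(\cS',\til{\Delta}')$ was arbitrary, $(S_0,\Delta_0)$ is K-semistable.

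I expect the genuine obstacle to be the construction in the second paragraph, i.e. realizing the spliced filtration as an honest special test configuration: finite generation of the $N\eta+\eta'$-Rees algebra for $N\gg1$, the identification of its central fibre with $(S_{00},\Delta_{00})$, and the survival of the $\bQ$-Gorenstein and klt conditions. The Futaki computation in Steps above is then essentially formal. Equivalently, one may run the entire argument on the cone $(X,D)=C(S,\Delta;-\lambda(K_S+\Delta))$ and express $\Fut$ through the normalized-volume formula \eqref{eq-Futhvol}; there the invariance of the $\eta$-term is transparent because $\vol_{X_0}(\xi)$ and the log discrepancies $A(\cdot)$ are preserved under special degeneration, while the same finite-generation question is exactly where the minimal-model-program input behind \cite[Section 6]{LX16} enters.
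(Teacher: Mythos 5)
There is a genuine gap, and it sits at the very first step: you quantify over an \emph{arbitrary} special test configuration $(\cS',\til{\Delta}';\eta')$ of $(S_0,\Delta_0)$, but your splicing construction only makes sense when $\eta'$ commutes with $\eta$, i.e.\ when the second test configuration is $\la\eta\ra$-equivariant. For a non-equivariant $(\cS',\til{\Delta}')$, the $\eta'$-filtration on $R_0=\gr_{\eta}R$ is not homogeneous with respect to the $\eta$-grading, so the ``combined grading of weight $N\eta+\eta'$ on $R$'' is undefined; moreover $R_{00}=\gr_{\eta'}\gr_{\eta}R$ then inherits no residual $\eta$-action, so the expression $\Fut(S_{00},\Delta_{00};\eta)$ and your claim that $\gr_{\eta'}$ ``preserves all $\eta$-weight multiplicities'' have no meaning --- your entire Futaki bookkeeping tacitly presupposes the equivariance. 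This is exactly where the paper inserts its key input: it argues by \emph{contradiction}, replacing Kempf's instability theorem by \cite[Theorem 1.4]{LX16}, which asserts that if $(S_0,\Delta_0)$ is \emph{not} K-semistable, then a destabilizing special test configuration with $\Fut<0$ can be chosen to be equivariant with respect to the $\bC^*$-action generated by $\eta$. Only after this equivariant reduction do the two one-parameter subgroups commute, $[\eta,\eta']=0$, and the splicing becomes available; a direct verification for arbitrary test configurations of $(S_0,\Delta_0)$, as you propose, is not accessible with the tools of this paper.

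There is a second, admitted, gap: even granting equivariance you leave open the finite generation of the spliced Rees algebra, the identification of its central fibre with $(S_{00},\Delta_{00})$, and the survival of the klt condition. The paper bypasses all of this by working projectively: it embeds both families equivariantly in $\PP^N$, passes to the two-factor Hilbert scheme $\HH^{N,P,p}$, takes the graph $\cG$ of the $(\bC^*)^2$-orbit map $\PP^1\times\PP^1\dashrightarrow \HH^{N,P,p}$, and pulls back the universal family along the lift $\hat\tau$ of the one-parameter subgroup $t\mapsto(t^k,t)$ for $k\gg1$, so that flatness and the identification of the central fibre with $(S'_0,\Delta'_0)$ come for free. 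The concluding computation then matches yours:
\begin{equation*}
\Fut(\cS'',\til{\Delta}'')=k\cdot\Fut(\cS,\til{\Delta})+\Fut(\cS',\til{\Delta}')=\Fut(\cS',\til{\Delta}')<0,
\end{equation*}
contradicting K-semistability of $(S,\Delta)$; the equality $\Fut(S'_0,\Delta'_0;k\eta)=\Fut(S_0,\Delta_0;k\eta)$ used there is the rigorous (equivariant) version of your ``weight multiplicities are preserved'' step, justified by Lemma \ref{lem-Futlinear2} and flatness of the weight decomposition. So your Futaki calculus is the right shape, but the proof needs the contradiction structure and \cite[Theorem 1.4]{LX16} to secure the equivariance on which everything else rests.
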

\begin{proof}
By \cite[Theorem E]{LX16}, if a log Fano pair admits a torus action, then to test its K-semistability it suffices to consider torus equivariant test configurations. Therefore, if $(S_0, B_0)$ is not K-semistable, then there is an equivariant  special test configuration $(\cS', \mathcal{B}'):=(\cS', \mathcal{B}', \eta')$ with respect to the $\bC^*$-action corresponding to $\eta$ such that 
$$\Fut(\cS', \mathcal{B}', \eta')<0.$$ We denote by $(S'_0, B'_0)$ the central fiber of $(\cS', \mathcal{B}', \eta')$. 

We can assume $(\cS, \cB)$ (resp. $(\cS', \cB')$) is $\bC^*$-equivariantly embedded into $\bP^N\times\bC\times\{0\}$ (resp. $\bP^N\times \{0\}\times\bC$). By abuse of notations, we denote
$\eta: \bC^*\to \SL(N+1)$ (resp. $\eta': \bC^*\to \SL(N+1)$) to be the 1-parameter subgroup (1-PS) generated by $\eta$ (resp. $\eta'$). Then $\eta$ commutes with $\eta'$, or equivalently
$[\eta, \eta']=0$. 
Let $\Theta=mB$ for some sufficiently divisible positive integer $m$ such that $\Theta$ is integral. 
\begin{equation}
\hilb(S,\Theta)\in \HH^{N,P,p}:=\{(\hilb(S),\hilb(\Theta))\in \hilb(\PP^N, P)\times \hilb(\PP^N, p) \mid \Theta\subset S\subset\PP^N\}
\end{equation}
where $P(k)=h^0(\cO_S(k))$ and $p(k)=h^0(\cO_\Theta(k))$ for $k\gg1$ are the Hilbert polynomials for $(S,\Theta)\subset \PP^N\times\PP^N$. The $\SL(N+1)$-action on $\PP^N$ induces an action on $\HH^{N,P,p}$. 
We then have the following convergence:
\begin{align*}
&\hilb(S_0, \Theta^*_0)=\displaystyle \lim_{t\to 0}\eta(t)\cdot \hilb(S,\Theta).
\end{align*}
We remark $\Theta_0:=mB_0$ is not necessarily the same as the scheme $\Theta_0^*$ due to the possible appearance of embedded points on $\Theta_0^*$. However, we have the inclusion of the ideal sheaves $I_{\Theta^*_0}\subset I_{\Theta_0}$, with the support of the cokernel being of codimension at least two on $S_0$. We can similarly define $p'(k)=h^0(\cO_{\Theta_0}(k))$
and have the following convergence:
\begin{align*}
&\hilb(S'_0, \Theta'_0)=\displaystyle \lim_{t\to 0}\eta'(t)\cdot \hilb(S_0, \Theta_0) \subset \HH^{N,P,p'}\\
&\hilb(S'_0, \Theta'^*_0)=\displaystyle \lim_{t\to 0}\eta'(t)\cdot \hilb(S_0, \Theta^*_0)\subset \HH^{N,P,p}.
\end{align*}
Therefore, we have the inclusion of the ideal sheaves $I_{\Theta'^*_0}\subset I_{\Theta'_0}$, and the codimension of the  support of the cokernel  is at least two on $S'_0$.

Our goal now is to construct a {new} test configuration $(\cS'',\mathcal{B}'')$ of $(S, B)$ with a special fiber $(S'_0, B'_0)$ such that $\Fut(\cS'',\mathcal{B}'')<0$, contradicting to our assumption that $(S, B)$ is K-semistable.

Notice that  the action of $\bC^\ast\times\bC^\ast\cong\la\eta\ra\times\la\eta'\ra<\SL(N+1)^2$ on $\HH^{N,P,p}$ induces a $\bC^\ast\times \bC^\ast$-equivariant map  
\begin{equation}\label{tau}
\begin{array}{cccc}
\phi:&\bC^*\times \bC^* & \overset{}{ \xrightarrow{\hspace*{1.0cm}}} & \HH^{N,P,p}\\
& (\eta,\eta') &\longmapsto & (\eta,\eta')\cdot \hilb(S,\Theta) \ 
\end{array}
\end{equation}
which may be regarded as a rational map  $\phi: \PP^1\times\PP^1\dashrightarrow \HH^{N,P,p}$. By extending $\phi$ to the closure  the its graph: 
$$(\PP^1\times\PP^1)\times \HH^{N,P,p}\supset\mathcal{G}:=\mathrm{graph}(\phi) \overset{\hat\phi}{ \xrightarrow{\hspace*{1.8cm}}} \HH^{N,P,p},$$
we obtain the following diagram
\begin{equation}\label{X-D-E}
\xymatrix{ & & \cG  \ar@{>}[d]^f  \ar@{>}[dr]^{\hat\phi} &  \\
\bC^* \ar@{>}[r]^{\!\!\!\!\tau}\ar@{>}@[][urr]^{\hat \tau}	&\bC^*\times \bC^* \ar@{^{(}->}[r] \ar@/_1.8pc/@[][rr]_{(\eta,\eta')\cdot\hilb(S,\Theta)}   &\PP^1\times\PP^1 \ar@{-->}[r]^\phi & \HH^{N,P,p},}
\end{equation}
where  $f$ is a $(\bC^*\times \bC^*)$-equivariant blow-up, $\phi(0,0)=\hilb(S'_0, \Theta'^*_0)$ and 
\begin{equation}\label{tau}
\begin{array}{cccc}
\tau:&\bC^* & \overset{}{ \xrightarrow{\hspace*{1.cm}}} & \bC^*\times \bC^*\\
& t &\longmapsto & (t^k,t) \
\end{array}
\end{equation}
is a 1-PS with $k\gg1$ and $\hat\tau$ its lift. Then $\hat\tau$ satisfies
$$\hat\phi\circ\hat\tau(0)=\lim_{t\rightarrow 0} \hat{\phi}\circ\hat{\tau}(t)=\phi(0,0)=\hilb(S'_0, \Theta'^*_0).$$

Let $(\cS'',\til{\Theta}'')$ be the flat family obtained by pulling pack the universal family $(\cS^\hilb,\mathcal{B}^\hilb)\to {\HH^{N,P,p}}$ via $\hat\tau$, and let $\mathcal{B}'':=\frac{1}{m}\til{\Theta}''$.
Then $(\cS'',\mathcal{B}'')$ is a special test configuration and we have
\begin{eqnarray*}
\Fut(\cS'', \mathcal{B}'')&=&\Fut(S'_0, B'_0; k\eta+\eta')\\
&=&\Fut(S'_0, B'_0; k \eta)+\Fut(S'_0, B'_0; \eta')\\
&=&\Fut(S_0, B_0; k \eta)+\Fut(S'_0, B'_0; \eta')\\
&=&k\cdot \Fut(\cS,\mathcal{B})+\Fut(\cS' ,\mathcal{B}')\\
&=&0+\Fut(\cS' ,\mathcal{B}')<0,
\end{eqnarray*}
where we used the linearity of the Futaki invariant (cf. Lemma \ref{lem-Futlinear2}) in the second identity.
Hence $(\cS'',\mathcal{B}'')$ is the test configuration we are looking for and our proof is  completed.
\end{proof}

\subsection{Common degenerations of log Fano pairs}

\subsubsection{A common degeneration result and outline of proof}\label{sec-common1}
The main technical theorem of this section is the following.

\begin{thm}\label{t-doubledeg}
Let $(S,B)$ be an $(n-1)$-dimensional K-semistable log Fano variety. If there are special test configurations $(\mathcal{S}^{(i)},\mathcal{B}^{(i)})$ ($i=1,2$) of $(S, B)$ with central fibers $(S^{(1)}_0,B^{(1)}_0)$ and $(S^{(2)}_0,B^{(2)}_0)$ such that ${\rm Fut}(\mathcal{S}^{(i)},\mathcal{B}^{(i)})=0$, then there are two special test configurations $(\cS'^{(i)},\mathcal{B}'^{(i)})$ of $(S^{(i)}_0, B^{(i)}_0)$ with isomorphic central log Fano fibers $(S'_0, B'_0)$ such that
${\rm Fut}(\mathcal{S}'^{(i)},\mathcal{B}'^{(i)})=0$.
\end{thm}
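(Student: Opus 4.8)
The plan is to realize both special test configurations simultaneously inside a single degeneration over $\bC^2$, so that the two one-parameter slices recover $(\cS^{(1)},\til\Delta^{(1)})$ and $(\cS^{(2)},\til\Delta^{(2)})$ while the central point $0\in\bC^2$ produces the common fiber $(S'_0,\Delta'_0)$. Following the sketch, I would first pass to the affine cone picture: fix a sufficiently divisible $\lambda>0$ and set $(X,D)=C(S,\Delta;-\lambda(K_S+\Delta))$, so that each $(\cS^{(i)},\til\Delta^{(i)})$ induces a weakly special (in fact special) $\bQ$-Gorenstein test configuration $(\cX^{(i)},\cD^{(i)},\xi_0;\eta_i)$ of the log Fano cone $(X,D,\xi_0)$, with $A(\xi_0)=\lambda^{-1}$, $A(\eta_i)=0$, and with generalized Futaki invariant equal to $\Fut(\cS^{(i)},\til\Delta^{(i)})=0$. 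By Theorem~\ref{p-m=k}, K-semistability of $(S,\Delta)$ is equivalent to $\wt_{\xi_0}$ being a minimizer of $\hvol_{(X,D,x)}$; this is the hypothesis I would exploit repeatedly.

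Next I would build the approximating Koll\'ar components for the \emph{first} test configuration. Using the filtration description of Lemma~\ref{l-filtration} and Lemma~\ref{l-tc}, the ray $\xi_\epsilon=\xi_0+\epsilon\eta_1$ yields, for $\epsilon=1/k$ with $k\gg1$ rational, a valuation $w_{1/k}=a_k\cdot\ord_{E_k}$ where $E_k$ is a Koll\'ar component over $(X,D,x)$; the condition $\Fut(\cS^{(1)},\til\Delta^{(1)})=0$ combined with minimality of $\wt_{\xi_0}$ forces $\hvol(\ord_{E_k})\to\hvol(\wt_{\xi_0})=\vol(x,X,D)$, i.e.\ the $E_k$ are asymptotically minimizing. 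The central construction is then to degenerate $E_k$ along the \emph{second} test configuration $(\cX^{(2)},\cD^{(2)})$: I would take the graded sequence of valuative ideals $\fa_\bullet=\fa_\bullet(\ord_{E_k})$ and form its equivariant degeneration $\frak A_\bullet$ over $\cX^{(2)}$ (a flat limit of the $\fa_\bullet$ under the $\langle\eta_2\rangle$-action), obtaining a model $\cY^{(2)}_k\to\cX^{(2)}$ with exceptional divisor $\cE^{(2)}_k$ whose generic fiber recovers $(Y_k,E_k)$.

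The heart of the argument—and the step I expect to be the main obstacle—is to show that $\frak A_\bullet$ can be used to \emph{extract} $\cE^{(2)}_k$ as the unique exceptional divisor by running an MMP. Concretely I would produce a positive constant $c'_k$ such that
\begin{equation*}
(\cX^{(2)},\cD^{(2)}+c'_k\,\frak A_\bullet)\ \text{is klt}\quad\text{and}\quad A\bigl(E_k\times\bC;\ \cX^{(2)},\cD^{(2)}+c'_k\,\frak A_\bullet\bigr)<1.
\end{equation*}
The existence of such $c'_k$ is exactly where K-semistability and $\Fut(\cS^{(1)},\til\Delta^{(1)})=0$ enter: by Proposition~\ref{p-liu} and the minimization results of \cite{Li17,Liu16,LL16,LX16,LX17}, the normalized multiplicity $\mult(\frak A_\bullet)\cdot\lct^n(\cX^{(2)},\cD^{(2)};\frak A_\bullet)$ is controlled by $\vol(x,X,D)$, and the near-minimality of $E_k$ translates into the strict log-discrepancy inequality above. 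Semicontinuity of $\lct$ under the flat degeneration guarantees the klt condition survives the passage from $\fa_\bullet$ to $\frak A_\bullet$. Granting this, \cite{BCHM10} gives a model with $\cE^{(2)}_k$ the unique exceptional place, and the equivariance of the whole construction (everything is built $T\times\langle\eta_2\rangle$-equivariantly) makes $\cE^{(2)}_k$ a Koll\'ar component over the central fiber.

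Finally I would assemble the $\bC^2$-family. Applying the relative Rees algebra construction to $\cE^{(2)}_k\subset\cY^{(2)}_k/\bC$ and quotienting by the natural rescaling $\bC^*$-action, I obtain a flat family over $\bC^2$ whose restriction to $\bC\times\{t\}$ ($t\neq0$) is $(\cS^{(1)},\til\Delta^{(1)})$, whose restriction to $\bC\times\{0\}$ is a special test configuration $(\cS'^{(1)},\til\Delta'^{(1)})$ of $(S^{(1)}_0,\Delta^{(1)}_0)$, and whose restriction to $\{0\}\times\bC$ is a special test configuration $(\cS'^{(2)},\til\Delta'^{(2)})$ of $(S^{(2)}_0,\Delta^{(2)}_0)$. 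The fiber over $0\in\bC^2$ is the common central fiber $(S'_0,\Delta'_0)$, giving the required isomorphism. The vanishing $\Fut(\cS'^{(i)},\til\Delta'^{(i)})=0$ then follows from Lemma~\ref{l-semistable} (which shows $(S^{(i)}_0,\Delta^{(i)}_0)$ is K-semistable, so its Futaki invariants are nonnegative) together with the additivity of the Futaki invariant along the two coordinate directions, exactly as in the linearity computation at the end of the proof of Lemma~\ref{l-semistable}. The delicate point throughout will be checking flatness and that the klt/log-discrepancy estimates are preserved under the equivariant degeneration $\fa_\bullet\rightsquigarrow\frak A_\bullet$; this is where the bulk of the technical work lies.
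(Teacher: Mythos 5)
Your proposal reproduces the paper's architecture (cone over $(S,\Delta)$, approximating Koll\'ar components $E_k$ along the ray of the first test configuration, degeneration of the valuative ideals along the second one, extraction via \cite{BCHM10}, relative Rees algebra over $\bC^2$), but the step you yourself flag as the crux --- producing $c'_k$ --- is argued incorrectly in two ways. First, you attribute the needed control to ``$\Fut(\cS^{(1)},\til{\Delta}^{(1)})=0$ forcing $\hvol(\ord_{E_k})\to\hvol(\wt_{\xi_0})$''; but mere convergence is automatic from continuity of $f(\epsilon)=\hvol(w_\epsilon)$ and is useless here, because $A_{(X,D)}(E_k)=k\lambda^{-1}$ grows linearly in $k$ (identity \eqref{eq-AXDvk}). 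Writing $c'_k=(1-\delta)c_k$, the slack $\delta$ must simultaneously satisfy $\delta\cdot k\lambda^{-1}<1$ (for the log discrepancy bound) and $1-\delta<\lct(\fb_{k,\bullet};X^{(2)}_0,D^{(2)}_0)/c_k$; this window is nonempty only because $f(1/k)=f(0)+O(1/k^2)$, and that quadratic rate is precisely what $f'(0)=C\cdot\Fut(\cX^{(1)},\cD^{(1)},\xi_0;\eta^{(1)})=0$ together with smoothness and convexity of $f$ delivers (Claim \ref{c'}). Second, your appeal to ``semicontinuity of $\lct$ under the flat degeneration'' to preserve the klt condition points in the wrong direction: $\lct$ is non-increasing under specialization, which the paper uses only for the \emph{upper} bound $\lct(\fb_{k,\bullet})^n\cdot\mult(\fb_{k,\bullet})\le c_k^n\cdot\mult(I_k)$. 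The essential \emph{lower} bound on $\lct(\fb_{k,\bullet};X^{(2)}_0,D^{(2)}_0)$ comes from Proposition \ref{p-liu} applied on the central fiber of the \emph{second} degeneration, and this requires $\vol(x^{(2)},X^{(2)}_0,D^{(2)}_0)=f(0)$, i.e.\ K-semistability of $(S^{(2)}_0,\Delta^{(2)}_0)$ via Lemma \ref{l-semistable} and Theorem \ref{p-m=k}. So Lemma \ref{l-semistable}, which you invoke only at the very end, is indispensable already at this stage.

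Two further steps are asserted where the paper has to work. (i) The extraction, after using ACC of log canonical thresholds \cite{HMX14} to choose $\epsilon$ uniformly in $k$, only yields a \emph{locally stable} family $(\cY^{(2)}_k,\mu_*^{-1}\cD^{(2)}+\cE^{(2)}_k)$; hence the restrictions of the $\bC^2$-family to the axes are a priori only \emph{weakly special} test configurations with semi-log-canonical central fiber, and you cannot directly claim they are special or that $\cE^{(2)}_k$ restricts to a Koll\'ar component over $0$. The paper upgrades at the end: were $(S'_0,\Delta'_0)$ not klt, \cite[Theorem 7]{LX14} would produce a special test configuration of $(S^{(1)}_0,\Delta^{(1)}_0)$ with strictly negative Futaki invariant, contradicting its K-semistability. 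Relatedly, identifying the slice $\{1\}\times\bC$ with $(\cX^{(1)},\cD^{(1)})$ requires the surjectivity of $\fa_m(\ord_{\cE_k})\to\fa_m(\ord_{E_k})$, proved via the vanishing $R^1\mu_*(-m\cE_k)=0$ --- a point your flatness discussion glosses over. (ii) The vanishing $\Fut(\cS'^{(i)},\til{\Delta}'^{(i)})=0$ does not follow from K-semistability plus additivity: that combination only shows the composite degeneration of $(S,\Delta)$ has nonnegative invariant, which imposes no constraint forcing zero. The paper instead uses that the generalized Futaki invariant is an intersection number, constant in the flat $\bC^2$-family, whose nearby one-parameter slices are isomorphic to $(\cS^{(2)},\til{\Delta}^{(2)})$ (resp.\ $(\cS^{(1)},\til{\Delta}^{(1)})$) and hence have vanishing invariant.
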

We remark that Theorem \ref{t-doubledeg} should be  regarded as  an analogy of a corresponding statement in classical geometric invariant theory (GIT) {(see e.g. \cite[Theorem 3.5]{New78})}.
As an immediate consequence  we have the following:
\begin{cor}
In the above notion, if we assume further that $(S^{(1)}_0,B^{(1)}_0)$ is K-polystable, then there is a special test configuration of $(S^{(2)}_0,B^{(2)}_0)$ with generalized Futaki invariant 0 and central fiber isomorphic to $(S^{(1)}_0,B^{(1)}_0)$.
\end{cor}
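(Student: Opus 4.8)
The plan is to prove the corollary directly from Theorem~\ref{t-doubledeg} by exploiting the fact that a K-polystable log Fano admits no nontrivial special degeneration with vanishing Futaki invariant. By Theorem~\ref{t-doubledeg}, the hypotheses provide special test configurations $(\cS'^{(i)},\til{\Delta}'^{(i)})$ of $(S^{(i)}_0,\Delta^{(i)}_0)$, for $i=1,2$, with $\Fut(\cS'^{(i)},\til{\Delta}'^{(i)})=0$ and with isomorphic central fiber $(S'_0,\Delta'_0)$. The entire argument then reduces to understanding the first of these two test configurations, which degenerates the K-polystable pair $(S^{(1)}_0,\Delta^{(1)}_0)$.

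First I would invoke the K-polystability of $(S^{(1)}_0,\Delta^{(1)}_0)$. By Definition~\ref{d-ksemiSE}, since $(S^{(1)}_0,\Delta^{(1)}_0)$ is K-polystable and the special test configuration $(\cS'^{(1)},\til{\Delta}'^{(1)})$ has $\Fut(\cS'^{(1)},\til{\Delta}'^{(1)})=0$, this test configuration must be a product, i.e.\ $(\cS'^{(1)},\til{\Delta}'^{(1)})\cong (S^{(1)}_0,\Delta^{(1)}_0)\times\bC$. In particular its central fiber $(S'_0,\Delta'_0)$ is isomorphic to $(S^{(1)}_0,\Delta^{(1)}_0)$ itself.

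Then I would transport this identification through the isomorphism of central fibers: since $(S'_0,\Delta'_0)\cong(S^{(1)}_0,\Delta^{(1)}_0)$, the remaining test configuration $(\cS'^{(2)},\til{\Delta}'^{(2)})$ of $(S^{(2)}_0,\Delta^{(2)}_0)$ has central fiber isomorphic to $(S^{(1)}_0,\Delta^{(1)}_0)$, and it satisfies $\Fut(\cS'^{(2)},\til{\Delta}'^{(2)})=0$ by construction. This is precisely the desired test configuration of $(S^{(2)}_0,\Delta^{(2)}_0)$.

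The only genuinely substantive input is Theorem~\ref{t-doubledeg}; once that is granted, the corollary is a short formal deduction, so I do not anticipate a serious obstacle. The one point requiring minor care is making sure that the product structure forced by K-polystability is correctly matched up with the abstract isomorphism $(S'_0,\Delta'_0)$ produced by Theorem~\ref{t-doubledeg}, so that the conclusion is stated for $(S^{(2)}_0,\Delta^{(2)}_0)$ rather than for $(S^{(1)}_0,\Delta^{(1)}_0)$; this is purely bookkeeping and carries no real difficulty.
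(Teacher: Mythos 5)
Your proof is correct and matches the paper's intent exactly: the paper states this corollary as an ``immediate consequence'' of Theorem~\ref{t-doubledeg} without writing out the deduction, and your argument---applying K-polystability of $(S^{(1)}_0,\Delta^{(1)}_0)$ to force $(\cS'^{(1)},\til{\Delta}'^{(1)})$ to be a product, so that $(S'_0,\Delta'_0)\cong(S^{(1)}_0,\Delta^{(1)}_0)$, and then reading off $(\cS'^{(2)},\til{\Delta}'^{(2)})$ as the desired test configuration---is precisely the intended one. No gaps.
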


To make our proof of Theorem \ref{t-doubledeg} in \ref{sec-pfdouble} more accessible, we first give an outline of the argument.
We will use notations introduced in previous sections.

Motivated by the works in \cite{Li17, LL16, LX16}, we consider the normalized volume function $\hvol_{(X,D,x)}(\cdot)$ defined on the valuation space $\Val_{X,x}$ over the vertex $x$ of the cone $(X,D)=C(S,B;-\lambda(K_S+B))$ for a sufficiently divisible $\lambda>0$. Then $(\cS^{(1)}, \mathcal{B}^{(1)})$ determines a  ``ray" of valuations, temporarily denoted by $\{w_\epsilon\}_{0\le \epsilon \ll 1}$, emanating from the canonical valuation $w_0=\ord_S$ ($S$ also denotes the divisor obtained by blowing up the vertex). Indeed, by Lemma \ref{l-n1} below, when $k\gg 1$, $w_{1/k}=a_k\cdot \ord_{E_k}$, where $a_k>0$ and $E_k$ is a Koll\'ar component over $(X,D, x)$ (see Lemma \ref{l-n1}). 
Moreover we know that the generalized Futaki invariant ${\rm Fut}(\mathcal{S}^{(1)},\mathcal{B}^{(1)})$ is the derivative of the normalized volume at $w_0$ along this ray. 


By taking cones similar as before, $\{(\cS^{(i)}, \mathcal{B}^{(i)})\}_{i=1,2}$ induce special degenerations of $(X, D)$, which will be denoted by $\{(\cX^{(i)}, \cD^{(i)})\}_{i=1,2}$. Note that $E_k\times\bC^*$ determines a divisorial valuation over $X\times\bC^*$ and hence over $(\cX^{(2)}, \cD^{(2)})$. As we mentioned in the introduction, our main goal is to construct a model $\cY_k^{(2)}\rightarrow \cX^{(2)}$ with a (prime) exceptional divisor $\cE^{(2)}_k$ given by $E_k\times \bC^*$, such that it satisfies $(\cY_k^{(2)}, \cE^{(2)}_k)\times_{\bC}\bC^*\cong (Y_k, E_k)\times \bC^*$, where the isomorphism is compatible with the equivariant isomorphism of the {\it second} special test configuration. 
Based on the results from  the minimal model program (MMP) (see \cite{BCHM10}), this would be true if we could find a graded sequence of ideals $\frak{A}_\bullet$ and a positive real number $c'_k$ such that the following two conditions are simultaneously satisfied: 
\[
(\cX^{(2)}, \cD^{(2)}+c'_k \frak{A}_\bullet)\mbox{ is klt \ \ \ and\ \ \ } A(E_k\times\bC; \cX^{(2)}, \cD^{(2)}+ c'_k \frak{A}_\bullet)<1, \label{e-extract} \tag{\ding{95}}
\]
where $A(E_k\times\bC; \cX^{(2)}, \cD^{(2)}+c'_k \frak{A}_\bullet)$ is the log discrepancy of (the birational transform of) $E_k\times\bC$ with respect to the triple $(\cX^{(2)}, \cD^{(2)}+ c'_k \frak{A}_\bullet)$. Note that this way of applying MMP is also a major ingredient in the study of some related problems in  \cite{Blu16, LX16, LX17} . 

To construct such a graded sequence $\frak{A}_\bullet$ of ideals, we look at the graded sequence of {valuation ideals} $\{\fa_{\bullet}\}$ of $\ord_{E_k}$ and its equivariant degeneration along the second special test configuration $(\cX^{(2)}, \cD^{(2)})$. The resulting graded sequence of ideals over $\cX^{(2)}$ will be denoted by $\frak{A}_{\bullet}$. We claim  for $k\gg 1$, $\frak{A}_{\bullet}$ is exactly what we are looking for. Indeed, as we will show (see Claim \ref{c'}), the assumptions that $(S,B)$ is K-semistable and $\Fut(\cS^{(1)}, \mathcal{B}^{(1)})=0$ guarantee the existence of $c'_k$ satisfying the two conditions in \eqref{e-extract}. This is possible thanks to the interaction between K-semistability and minimization of normalized volumes/normalized multiplicities. 


Applying the relative Rees algebra construction to $\cE^{(2)}_k\subset \mathcal{Y}^{(2)}_k/\mathbb{C}$, and then taking a quotient by the natural rescaling $\mathbb{C}^*$-action, one can obtain a family over $\mathbb{C}^2$, whose restriction to $\mathbb{C}\times \{t\}$ for $t\neq 0$ is the same as $(\cS^{(1)},\mathcal{B}^{(1)})$  and it gives a degeneration of $(S^{(1)}_0, B^{(1)}_0)$  when restricted to  $\bC\times \{0\}$. On the other hand, over $\{0\}\times \mathbb{C}$, one get a degeneration of $(S^{(2)}_0, B^{(2)}_0)$. Therefore, we obtain that the two log Fano varieties $(S^{(i)}_0,B^{(i)}_0)$ $(i=1,2)$, which are special fibers of the two special test configurations $(\cS^{(i)},\mathcal{B}^{(i)})\ (i=1,2)$ with ${\rm Fut}(\mathcal{S}^{(i)},\mathcal{B}^{(i)})=0\ (i=1,2)$, indeed admit degenerations with {\em isomorphic} special fibers (see Theorem \ref{t-doubledeg}).

\subsubsection{Proof of Theorem \ref{t-doubledeg}}\label{sec-pfdouble}

As $(\mathcal{S}^{(i)},\mathcal{B}^{(i)})$ ($i=1,2$) are special test configurations,  $(S^{(1)}_0, B^{(1)}_0)$ and $(S^{(2)}_0, B^{(2)}_0)$ are log Fano varieties. Consider the  cone $(X,D)=C(S,B;-\lambda(K_S+B))$ over $S$ and similarly $(X^{(i)}_0, D^{(i)}_0)=C(S^{(i)}_0,B^{(i)}_0; -\lambda(K_{S^{(i)}_0}+B^{(i)}_0)) (i=1,2)$ for some sufficiently divisble $\lambda$. 
Denote the corresponding degeneration of $X$ to $X^{(i)}_0$ over $\mathbb{C}$ to be $\mathcal{X}^{(i)}$, then we get  special test configurations $(\cX^{(i)}, \cD^{(i)}, \xi_0; \eta^{(i)})$  of $(X,D,\xi_0)$, where $\cD^{(i)}$ is the cone over $\mathcal{B}^{(i)}$ and $\xi_0$ is from the natural $\mathbb{C}^*$-action on the cone. 

\medskip

From \cite[Definition 4.4]{BHJ15}, we know that the central fiber $S^{(1)}_0$ of the special degeneration $\mathcal{S}^{(1)}$ induces a valuation $w':=q \cdot \ord_{F}$ for some divisor $F$ over $S$. Let $\ord_{S}$ denote the canonical divisorial valuation associated to the exceptional divisor, which is isomorphic to $S$, obtained by blowing up the vertex $x$. 
Assume $\mu: \til{S}\rightarrow S$ is a birational morphism such that the divisor $F$ is on $\tilde{S}$ and $(\til{S}, F)$ is log smooth. Let $\til{X}\to X$ be the resolution given by the total space of the line bundle of $\mu^*(\lambda(-K_X-B))$ over $\tilde{S}$.
Then following \cite[Page 3181-3182]{Li17}, we denote by $a_1=-\lambda (A_{(S, B)\times \bC}(S^{(1)}_0)-1)$ and let $w_{\epsilon}$ be the quasi-monomial valuation on the model $(\til{X}, \til{S}+\til{F})$ with weight $(1+ \epsilon a_1, \epsilon q)$ with respect to $\til{S}$ and the pull back $\til{F}$ of $F$ by $\tilde{X}\to \til{S}$ (see \cite[Definition 6.12]{Li17}). 
We choose $\epsilon^*$ such that $1+\epsilon a_1>0$ for any $\epsilon \in [0, \epsilon^*)$. Then $w_{\epsilon}$ is centered at the vertex $x$ of $X$. 
By \cite[Proposition 6.16]{Li17}, we have the identity: 
\begin{equation}\label{eq-AXDvk}
A_{(X,D)}(v_k)=k A_{(X,D)}(w_{1/k})=k\cdot A_{(X,D)}(\ord_{S})=k\cdot \lambda^{-1}.
\end{equation}

For $\bN \ni k\gg 1$, let $v_k=k \cdot w_{1/k}$. Then $v_k=d \cdot \ord_{E_k}$ is a multiple of a divisorial valuation $\ord_{E_k}$ for some $d\in \bZ_{>0}$. As a valuation, we can describe $v_k$ explicitly as follows (see \cite[(57)]{Li17}). For any $f\in H^0(S, -m \lambda(K_S+B))$, 
\begin{equation}
v_k(f)=k m +\ord_{S^{(1)}_0}(\bar{f}).
\end{equation}
where $\bar{f}$ is the meromorphic section of $m\cL\rightarrow \cS$ obtained by pulling back $f$ via the map $$(\cS\backslash S^{(1)}_0, m\cL) \cong (S\times\bC^*, m \cdot p_1^* L)\stackrel{p_1}{\longrightarrow} (S, mL). $$
Note that $v_{k}$ can also be defined as the restriction of $\wt_{k\xi_0- \eta}$ from $\bC(\cX^{(1)})$ to $\bC(X)$ by using the $\bC^*$-equivariant isomorphism $\cX^{(1)}\backslash X^{(1)}_0\cong X\times \bC^*$ (see \cite[page 3184-3185]{Li17}). 

\begin{lem}\label{l-n1}
Notations as above, 
for $k\gg 1$, the divisor $E_k$ corresponding to $v_{k}$ is a Koll\'ar component with an associated model $Y_k\rightarrow (X,D)$. Moreover, the special test configuration $(\cX^{(1)}, \cD^{(1)})$ is given by the special test configuration associated to $E_k$ (in the sense of Definition \ref{d-model}) up to a base change. In particular $(\cS^{(1)}, \mathcal{B}^{(1)})$ can be recovered by the model $E_k\rightarrow Y_k\rightarrow (X, D)$.
\end{lem}
\begin{proof} 
For simplicity, we denote $L=-\lambda(K_S+B)$. By \cite[Proposition 2.15]{BHJ15} (see also Lemma \ref{l-filtration}), we know that $\mathcal{X}^{(1)}$ is given by
$${\rm Spec}_{\bC[t]}\left( \bigoplus_{m\in \mathbb{N}}\left(\bigoplus_{j \in \mathbb{Z}}t^{-j}\cF^{j}H^0(S, mL)\right)\right)=:{\rm Spec}_{\bC[t]}(\cR^{(1)})$$
where $\cF^{j}H^0(S, mL)$ is given by:
$$
\cF^j H^0(S, m L)=\left\{s\in H^0(S, mL)\;|\; t^{-j}\bar{s}\in H^0(\cS, m\cL)\right\}.
$$
Therefore $X^{(1)}_0$ is isomorphic to
$${\rm Spec}\left( \bigoplus_{j\in \mathbb{N}}\left(\bigoplus_{m \in \mathbb{N}}\cF^j H^0(S, mL)/ \cF^{j+1}H^0(S, mL) \right)\right),$$
and the $(\mathbb{C}^*)^2$-action on $X^{(1)}_0$ is induced by the two gradings.

On the other hand, 
$f\in \cF^{j}H^0(S, m\lambda(-K_S-B))$ if and only if $\ord_{S^{(1)}_0}(\bar{f})\ge j$ which by \cite[(57)]{Li17} is equivalent to 
\begin{eqnarray*}
v_k(f)&=&mk+\ord_{S^{(1)}_0}(\bar{f})\ge mk+j \ . 
\end{eqnarray*}
In other words, the {valuation ideal} $\fa_p(v_k)$ of $v_k=d\cdot \ord_{E_k}$ is determined by:
$$f\in H^0(S,m\lambda(-K_S-B))\cap \fa_p(v_k) \mbox{ if and only if } f\in \cF^{p-mk}H^0(S, m\lambda(-K_S-B)).$$
 
Since $v_k\in \Val_{X,x}$ is $\mathbb{C}^*$-invariant, we have the identity:
$${\rm gr}_{v_k} R=\bigoplus_{p\in d\bZ} \bigoplus_{m} \cF^{p-mk}H^0(S, mL)/\cF^{p+1-mk}H^0(S, mL).$$
Let $\xi_{\frac{1}{k}}:=\xi_0-\frac{1}{k}\eta$. For an element  
$$\bar{f}\in \cF^j H^0(S, m L)/\cF^{j+1} H^0(S, m L),$$ 
its weight vector is $\alpha=(m,-j)$ and $
\la \xi_{\frac{1}{k}}, \alpha\ra=m+\frac{j}{k}
$.
Thus 
$${\rm Proj}({\rm gr}_{\ord_{E_k}}R)\cong {\rm Proj}({\rm gr}_{v_k}R)$$ is the quotient of $X^{(1)}_0$ by the $\mathbb{C}^*$-action generated by $\xi_{\frac{1}{k}}$ (see Definition \ref{d-quotient}). So we have:
$(E_k, B_k):=\big((X^{(1)}_0, D^{(1)}_0)\setminus\{x^{(1)}\}\big)/\langle \xi_{\frac{1}{k}}\rangle$ (where $x^{(1)}$ is the vertex and $B_k$ includes the orbifold locus) and $E_k$ can be extracted over $X$. 
Since $(X^{(1)}_0, D^{(1)}_0)$ has klt singularities, $(E_k, B_k)$ is a log Fano variety which has klt singularities and hence is a Koll\'ar component over $X$ by the inversion of adjunction. 

To see that last statement, note that we can rewrite $\cR^{(1)}$ as:
\begin{eqnarray*}
\cR^{(1)}&=&\bigoplus_{j\in \bZ}\bigoplus_{m\in \bN} t^{-j} \fa_{mk+j}(v_k)\cap  H^0(S, mL),\\
&=&\bigoplus_{p\in\bZ}\bigoplus_{m\in \bN}t^{-p+mk}\fa_p(v_k)\cap H^0(S,mL),
\end{eqnarray*}
which is isomorphic to the extended Rees algebra of $\fa_\bullet(v_k)$:
\begin{equation}
\bigoplus_{p\in \bZ}\bigoplus_{m\in \bN} t^{-p}\fa_p(v_k)\cap H^0(S, mL)=\bigoplus_{p\in \bZ}t^{-p}\fa_p(v_k).
\end{equation}
Indeed, it is easy to verify that the map $t^{-p+mk}f\mapsto t^{-p}f$ for any $f\in \fa_p\cap H^0(S, mL)$ is an isomorphism of the two algebras. On the other hand, the extended Rees algebra of $\ord_{E_k}$ is given by:
\begin{eqnarray*}
\bigoplus_{q\in \bZ} u^{-q}\fa_q(\ord_{E_k})=\bigoplus_{p\in d\bZ}u^{-\frac{p}{d}}\fa_{p}(v_k).
\end{eqnarray*}
From this we see that $\cX^{(1)}=\cY\times_{\bC, t\mapsto t^d}\bC$ where $\cY$ is the test configuration associated to $\ord_{E_k}$ in the sense of Definition \ref{d-model}.
\end{proof}
In the proof of Lemma \ref{l-n1}, there is a rank 2 torus $(\mathbb{C}^*)^2$ \Red{acting} on $X^{(1)}_0$, such that if we let $\xi_0$ be the coweight vector $(1,0)$, then $X^{(1)}_0/ \langle\xi_0\rangle \cong S^{(1)}_0$, and the action by the coweight $(0,1)$ is induced by the action on $S^{(1)}_0$ from the test configurational $\mathcal{S}^{(1)}$ structure.  We construct a ray $\xi_\epsilon=\xi_0-\epsilon\eta$, where $\eta$ corresponds the action with coweight $(0,1)$. Then any $\xi_\epsilon$ gives a quasi-monomial valuation $\wt_\epsilon$ on $X^{(1)}_0$ (see \eqref{eq-wtxi}). Moreover, for $\epsilon\in[0,\epsilon^*)$, it also induces a sequence of quasi-monomial valuations $w_{\epsilon}$ in $X$ which is contained in $\Val_{X,x}$ (see \cite[Proof of Theorem 3.5]{LX17}). Our proof in Lemma \ref{l-n1} just gives a verification of the divisorial valuation, which can be easily extended to the general case.

Furthermore, as proved in \cite[Lemma 6.20]{Li17} or \cite[Section 2.4]{LX17}, if we define $f(\epsilon):=\hvol(w_\epsilon)$, then it  is a smooth convex function on $[0,\epsilon^*)$ with $0<\epsilon^*\ll 1$ such that $f(0)=\hvol(v)$ and
\begin{eqnarray*}
 f'(0)&= &\left.\frac{d}{d\epsilon}\hvol_X(w_\epsilon)\right|_{\epsilon=0}=\left.\frac{d}{d\epsilon}\hvol_{X_0}(\wt_\epsilon)\right|_{\epsilon=0}\\
  & =&\left.\frac{d}{d\epsilon}\right|_{\epsilon=0}\hvol_{X_0}(\xi_0-\epsilon\eta)\\
  &=& C\cdot \Fut (\cX^{(1)},\cD^{(1)}, \xi_0; \eta^{(1)}),
\end{eqnarray*}
where the last identity follows from \eqref{eq-Futhvol} and the constant 
$$C=n \cdot A_{X_0}(\wt_{\xi_0})^{n-1}\cdot \vol(\xi_0)>0.$$

\begin{lem}\label{l-model}
For $k$ sufficiently large, the model $Y_k\to X$ extracting $E_k$ can be degenerated along $\cX^{(2)}$ to obtain a model $\mu: \cY^{(2)}_k \to \cX^{(2)}$ over ${\bC}$ with an exceptional divisor $\cE_k$ such that the following properties hold true:
\begin{enumerate}
\item There is the following isomorphism which is equivariant with respect to the $\bC^*$-action generated by $\eta^{(2)}$:
$$(\cY^{(2)}_k, \cE^{(2)}_k)\times_{\bC}\bC^*\cong (Y_k, E_k)\times\bC^*.$$ 
\item
$(\cY^{(2)}_k, \mu_*^{-1}\cD^{(2)}+\cE^{(2)}_k+(\cY^{(2)}_k)_t)$ is log canonical for any $t\in \bC$ (i.e., in the terminology of \cite[Definition 2.2]{Kol18}, $(\cY^{(2)}_k, \mu_*^{-1}\cD^{(2)}+\cE^{(2)}_k)$ a locally stable family over $\bA^1_{\mathbb{C}}$).
\end{enumerate}
\end{lem}
\begin{proof}
For a fixed sufficiently large $k$, denote by $I_k$ the $\fm$-primary ideal over $x\in X$ induced by $E_k$ which is the push forward of $\mathcal{O}(-mE_k)$ for a fixed sufficiently divisible $m$.
 Let $$c_k=\lct(I_k; X, D)=:\lct(I_k)$$ 
 be its log canonical threshold. Then because $E_k$ is a Koll\'ar component, we have:
$$f\left(\frac{1}{k}\right)=\hvol(\ord_{E_k})=\mult(I_k) \cdot c_k^n.$$
Note that because of the rescaling invariance of the normalized multiplicities
$\mult(I_k)\cdot\lct^n(I_k),$
we can replace $I_k$ by its powers and the normalized multiplicities do not change, so we do not specifically denote $m$.

\bigskip

Since  $f'(0)=C\cdot {\rm Fut}(\mathcal{X}^{(1)},\cD^{(1)}, \xi_0; \eta)=0$, we have
 $$f\left(\frac{1}{k}\right)=f(0)+O\left(\frac{1}{k^2}\right).$$
 
 Fix $k$, for each $l\ge 1$, as in \cite[Lemma 4.1]{LX16}, we can construct a graded sequence of ideals $\frak{A}_\bullet=\{\frak{A}_l\}$ on $\cX^{(2)}$ such that 
 $$\frak{A}_l\otimes_{\bC[t]}\bC[t, t^{-1}]\cong I_k^{l}[t,t^{-1}] \mbox{\ \  \ and\  \ \ }\frak{A}_l\otimes_{\bC[t]}(\bC[t]/(t))\cong  \bin(I_k^l),$$ 
 where $\{\bin(I_k^l)\}$ is the graded sequence of ideals consisting of initial ideals of the sequence $\{I_k^l\}_l$ for the $\bC^*$-degeneration of $X$ to $X^{(2)}_0$.
To simplify the notations, we just denote 
$$\fb_{k, \bullet}=\{\fb_{k,l}\}_l=\{\bin(I_k^l)\}.$$ 

\medskip

\begin{claim}\label{c'}
For any $\epsilon>0$, we can find $k$ sufficiently large and $\delta$ sufficiently small satisfying:
\begin{equation}\label{e-doublecontrol}
A(E_k; X, D+ c_k'I_k)<\epsilon/2\qquad \mbox{and} \qquad c_k'< {\rm lct}( \fb_{k,\bullet}; X^{(2)}_0, D^{(2)}_0)
\end{equation}
with $c_k':=c_k(1-\delta)$.
\end{claim}

\begin{proof}[Proof of Claim \ref{c'}]
To prove the claim, we first note that, by using $A(E_k, X, D+c_k I_k)=0$ and identity \eqref{eq-AXDvk}:
\begin{eqnarray*}
A(E_k; X, D+(1-\delta)c_k I_k)=\delta \cdot A_{(X,D)}(E_k)=\delta\cdot k \cdot A_{(X,D)}(\ord_S).
\end{eqnarray*}
On the other hand, since $(X^{(2)}_0,D^{(2)}_0)$ is K-semistable by Lemma \ref{l-semistable}, we know that $f(0)=\vol(x^{(2)}, X^{(2)}_0, D^{(2)}_0)$ (see Theorem \ref{p-m=k}), where $x^{(2)}$ is the vertex. 
Therefore, 
\begin{eqnarray*}
f(0)&\le & \lct(\fb_{k,\bullet}; X^{(2)}_0, D^{(2)}_0)^n\cdot\mult(\fb_{k,\bullet})\\
 &\le & c_k^n\cdot \mult(I_k)\\
 &=& f\left(\frac 1 k\right)=f(0)+O\left(\frac{1}{k^2}\right),
 \end{eqnarray*}
 where we have used Proposition \ref{p-liu} for the first inequality, and the non-increasing of log canonical thresholds under specialization as well as $\mult(\fb_{k,{\bullet}})=\mult(I_k)$ for the second inequality.
 
We get the inequality:
$$\frac{ \lct(\fb_{k,\bullet}; X^{(2)}_0, D^{(2)}_0)}{c_k}\ge \left(\frac{f(0)}{f(1/k)}\right)^{1/n}.$$
Since $(1+O(\frac{1}{k^2}))^{\frac{1}{n}}$ is also of the order $(1+O(\frac{1}{k^2}))$, for any fixed $\epsilon$, there exists $K_0\gg 0$ such that for any $k\ge K_0$,
$$\left(\frac{f(0)}{f(1/k)}\right)^{1/n}\ge 1-\frac{\epsilon}{4 k\cdot A_{X, D}(\ord_S)}.$$
Now if we choose $\delta$ to be:
$$\delta=\frac{\epsilon}{2k\cdot A_{(X,D)}(\ord_S)},$$
then $c_k'=(1-\delta )\cdot c_k <  \lct(\fb_{k,\bullet}; X^{(2)}_0, D^{(2)}_0 )$ and 
\begin{eqnarray}\label{e-discrepancy}
A(E_k, X, D+(1-\delta )c_k I_k)=\epsilon/2.
\end{eqnarray}
\end{proof}
We may assume $\epsilon$ is less than 1.
It follows from 
{Claim \ref{c'} that}
\begin{equation}
A(E_k\times\bC; \cX^{(2)}, \cD^{(2)}+c'_k \frak{A}_\bullet)<\epsilon/2\qquad \text{ and } \qquad c'_k< {\rm lct}(\frak{A}_\bullet; \cX^{(2)}, \cD^{(2)}+X_0^{(2)}),
\end{equation}
where we used  the inversion of adjunction for the second inequality. 
We can then apply \cite[Corollary 1.4.3]{BCHM10} to precisely extract an irreducible divisor $\cE^{(2)}_k$ to obtain a birational morphism $\mu\colon \cY^{(2)}_k \to \mathcal{X}^{(2)}$ whose restriction over $X\times \mathbb{C}^*$  is the divisor $E_k\times \mathbb{C}^*$ and $-\cE^{(2)}_k$ is ample over $\mathcal{X}^{(2)}$.

Moreover, as  $({\cY^{(2)}_k}, \mu_*^{-1}\cD^{(2)}+(1-\epsilon )\mathcal{E}^{(2)}_k+Y_0^{(2)})$ is log canonical, by ACC of log canonical thresholds (\cite[Theorem 1.1]{HMX14}), we may choose $\epsilon$ to be  sufficiently small and independent of $k$ such that  $({\cY^{(2)}_k}, \mu_*^{-1}\cD^{(2)}+\mathcal{E}^{(2)}_k+Y_0^{(2)})$ is log canonical. 
\end{proof}

There is a $\mathbb{C}^*\times \mathbb{C}^*=\la \xi_0\ra\times \la \eta^{(2)}\ra$-action on $X^{(2)}_0$. Note that $[\xi_0, \eta^{(2)}]=0$.
The ideals $\{\fb_{k,\bullet}\}$ is $(\mathbb{C}^*)^2$-equivariant. In fact, by definition it is clearly equivariant with respect to $\la\eta^{(2)}\ra$. It is also equivariant with respect to the first factor because $E_k$ is $\la \xi_0\ra$-invariant and $\cX^{(2)}$ is $\la\xi_0\ra$-equivariant.

\begin{equation}\label{eq-CD2}
\xymatrix@1 @R=1.2pc @C=1pc
{
(X^{(2)}_0, D^{(2)}_0) \ar@{~>}_{(\cX'^{(2)}, \cD'^{(2)})}[dddd]
 \ar@{-->}[dr] &   & &  &(X, D) \ar_{ (\cX^{(2)}, \cD^{(2)})\longleftarrow \cY^{(2)}_k\longleftarrow\cE^{(2)}_k}@{~>}[llll] \ar^{(\cX^{(1)}, \cD^{(1)})\leftarrow \cY_k\leftarrow \cE_k=E_k\times\bA^1}@{~>}[dddd]  \ar@{-->}[ld] & \ar[l] Y_k\leftarrow E_k
 \\
& (S^{(2)}_0,B^{(2)}_0) \ar@{~>}_{(\cS'^{(2)},\mathcal{B}'^{(2)})}[dd] &  &  (S, B) \ar_<<<<<<<{(\cS^{(2)}, \mathcal{B}^{(2)})}@{~>}[ll] \ar^{(\cS^{(1)}, \mathcal{B}^{(1)})}@{~>}[dd] & & \\
&   &   &\\
&                              (S'_0, B'_0)     &               &(S^{(1)}_0, B^{(1)}_0) \ar@{~>}^{\stackrel{}{(\cS'^{(1)},\mathcal{B}'^{(1)})}}[ll]& & \\
(X'_{0}, D'_{0}) \ar@{-->}[ru] & & & & (X^{(1)}_0, D^{(1)}_0)\ar@{~>}^{(\cX'^{(1)}, \cD'^{(1)})}[llll] \ar@{-->}[lu] & \ar[l] Y_{k,0}\leftarrow E_k .
}
\end{equation}


\medskip

Now we apply the family version of the construction first introduced in \cite[Section 2.4]{LX16}, to conclude that the model $\cY^{(2)}_k\to \mathcal{X}^{(2)}$ with relative anti-ample $\cE^{(2)}_k$ over $\mathcal{X}^{(2)}$ yields a degeneration of $\mathcal{X}^{(2)}$ which gives a  family $(\mathfrak{X}, \frak{D})$ over $\mathbb{C}^2$, whose restriction over $ (\mathbb{C}^*)^2\subset \mathbb{C}^2$ is isomorphic to $(X,D)\times (\mathbb{C}^*)^2$.
More precisely, if we assume $\cX^{(2)}={\rm Spec}_{\bC[s]}(\cR^{(2)})$ and define the extended Rees algebra: 
\begin{equation}
\frak{R}=\bigoplus_{m\in\bZ} \fa_m(\ord_{\cE_k}) s^{-m} \subset \cR^{(2)}[s, s^{-1}],
\end{equation}
where as before $\fa_m(\ord_{\cE_k})=\{f\in \cR^{(2)},\; \ord_{\cE_k}(f)\ge m\}$. Then $\frak{X}={\rm Spec}_{\bC[t,s]}(\frak{R})$ and $\frak{D}$ is the divisor on $\frak{X}$ induced by $\cD^{(2)}$. Using the fact that  $({\cY^{(2)}_k}, \mu_*^{-1}\cD^{(2)}+\mathcal{E}^{(2)}_k+({\cY^{(2)}_k})_t)$ is log canonical for any $t\in \bC$ (see Lemma \ref{l-model}.2), 
we know that $(\frak{X}, \frak{D}+\frak{X}\times_{\bC^2}(\{t\} \times \bC))\times_{\bC^2}(\bC\times \{0\})$ is log canonical (see Lemma \ref{l-tc}). 

\medskip

Using the basic property of the Rees algebra (see e.g. \cite[Section 4.1]{LX16}), we see that
\begin{eqnarray*}
(\frak{X}, \frak{D})\times_{\bC^2} (\bC\times\{1\}) \cong (\cX^{(2)}, \cD^{(2)}).
\end{eqnarray*}
Moreover, we claim that:
\begin{eqnarray*}
(\frak{X}, \frak{D})\times_{\bC^2} (\{1\}\times\bC) \cong (\cX^{(1)}, \cD^{(1)}).
\end{eqnarray*}
This holds true if the morphism $\fa_m(\ord_{\cE_k})=\mu_*\cO(-m \cE_k)\rightarrow \fa_m(\ord_{E_k})=(\left.\mu\right|_{Y_k})_*\cO(-mE_k)$ is surjective which follows from the vanishing $R^1\mu_*(-m\cE_k)=0$.

The restrictions of $(\frak{X}, \frak{D})$ over the two axes $\mathbb{C}\times \{0\}$ and $\{0 \}  \times \mathbb{C}$ respectively give test configurations $(X^{(1)}_0, D^{(1)}_0)$ and $(X_0^{(2)}, D^{(2)}_0)$ with the same central fiber $(X'_0, D'_0)$.  We know these two test configurations are indeed weakly special because $(\frak{X}, \frak{D}+\frak{X}\times_{\bC^2}(\{0\} \times \bC))\times_{\bC^2}(\bC\times \{0\})$.

The $\la\xi_0\ra$-action on $(X, D)$ extends naturally to $(\mathfrak{X}, \frak{D})$ over $\mathbb{C}^2$. Moreover, $K_{\frak{X}}+\frak{D}$ is $\bQ$-Cartier and admits a $(\bC^*)^2$-equivariant nowhere-vanishing section $\frak{s}\in |m(K_{\frak{X}}+\frak{D})|$. Then we can take the quotient of the action $(\frak{X}, \frak{D})$ by the $\la \xi_0\ra$-action to get a pair $(\frak{S}, \widetilde{\underline{B}})$. Its restrictions over the two axes $\mathbb{C}\times \{0\}$ and $\{0 \}  \times \mathbb{C}$ respectively give  test configurations $(S^{(1)}_0, B^{(1)}_0)$ and $(S_0^{(2)},B^{(2)}_0)$ with the same central fiber $(S'_0, B'_0)$. Because the generalized Futaki invariants are defined by the intersection numbers, we know the generalized Futaki invariant of the test configuration $(\frak{S}, \widetilde{\underline{B}})\times_{\mathbb{C}^2}(\bC\times\{0\})$ degenerating  $(S^{(1)}_0, B^{(1)}_0)$ to $(S'_0, B'_0)$ is 0 since the nearby fibers $(\frak{S}, \widetilde{\underline{B}})\times_{\mathbb{C}^2}(\bC\times\{t\})$ $(t\neq 0)$ all have generalized Futaki invariants 0, and the same is true for the test configuration $(\frak{S}, \widetilde{\underline{B}})\times_{\mathbb{C}^2}(\{0 \}  \times \mathbb{C})$ degenerating $(S_0^{(2)},B^{(2)}_0)$ to  $(S'_0, B'_0)$.

Then the central fiber $(S'_0, B'_0)$ will automatically be a log Fano variety since otherwise it follows from \cite[Theorem 7]{LX14} that we can construct a special test configuration of $(S^{(1)}_0, B^{(1)}_0)$ with a strictly negative Futaki invariant, which contradicts to the K-semistability of $(S^{(1)}_0, B^{(1)}_0)$ by Lemma \ref{l-semistable}.

Thus this completes the proof of  Theorem \ref{t-doubledeg}.

{\subsubsection{Application to torus-equivariant K-polystability}}
\begin{lem}\label{l-rank}
Assume $(S,B)$ is an $(n-1)$-dimensional K-semistable log Fano pair. Let $(\cS, \mathcal{B})$ be a special test configuration {with central fibre} $(S_0,B_0)$ such that $\Fut(\mathcal{S},\mathcal{B})=0$. If $S$ admits a torus $T_S\cong (\mathbb{C}^*)^{d-1}$-action, then $(S,B)$ and $(S_0,B_0)$ admits a common K-semistable degeneration $(S_1,B_1)$ with a $T_S\times \mathbb{C}^*\cong (\mathbb{C}^*)^{d}$-action, which extends the $T_S$-action on $(S,B)$ as well the $\bC^*$-action on $(S_0,B_0)$.
\end{lem}
\begin{proof}
By Lemma \ref{l-semistable}, we know $(S_0,B_0)$ is K-semistable. Fix a sufficiently divisible $\lambda$. By Lemma \ref{l-n1}, for $k\gg 1$, the special degeneration induces a Koll\'ar component $E_k$ over the cone 
$$E_k\rightarrow Y_k\rightarrow (X, D)=C(S, B; -\lambda(K_S+B)). $$ 
The cone $(X,D)$ admits a $T\cong T_S\times \bC^*$-action, where the first factor $T_S$-action is induced from the $T_S$-action on $(S,B)$ and the second factor $\bC^*$-action comes from the natural rescaling  on the cone $(X,D)$.

Consider the {valuation ideal} $I_k=\fa_m(\ord_{E_k})$ for $m\gg 1$,
and its equivariant  degeneration $\{\fb_{k,l}\}_l$ of $ \{I_k^l\}$ on the fiber of $X_{\bC^d}:=X\times \mathbb{C}^d$ over $0\in \bC^d$ with respect the $T$-action given by $\widetilde{\fb_{k,l}}$. That is to say, $\widetilde{\fb_{k,l}}$ is a $T$-equivariant ideal on $X_{\bC^d}$ which is flat over $\bC^d$, whose restriction over in $\underline{1}=(1,...,1)\in (\bC^*)^d\subset \mathbb{C}^d$ is $I_k^l$, and over $0\in \bC^d$ is $\fb_{k,l}$.

Then as before, we know there is a smooth function $f$ on $[0,\epsilon^*)$ with $0<\epsilon^*\ll 1$ such that
$$f\left(\frac{1}{k}\right)=\mult(I_k)\cdot \lct(I_k)^n \qquad \mbox{ and \ \  }f'(0)=0.$$
Then by exactly the same argument  as in Claim \ref{c'}, we know that  for any $\epsilon>0$, we can pick $k\gg 1$ and $c'_k$ such that
$$A(E_k, X, D+ c_k'I_k)<\epsilon\qquad \mbox{and} \qquad c_k'< {\rm lct}(\fb_{k,\bullet}; X_0, D_0).$$

Therefore, we can construct a $T$-equivariant birational model $\mu_d\colon \cY_k\to X_{\bC^d}$ which over $\underline{1}$ yields $Y_k\to X$, and ${\rm Ex}(\mu_d)=\cE$ is an irreducible divisor which is anti-ample over $X_{\bC^d}$.  For any point $t\in \bC^d$, we can find the coordinate hyperplanes $H_1,...,H_d$ passing through $t$. By inversion of adjunction, $(X_{\bC^d}, D_{\bC^d}+\sum^d_{i=1}H_i+c_k'\cdot\widetilde{\fb_{k,l}})$ is log canonical, which implies that $(\cY_k, \cE+\mu_*^{-1}D_{\bC^d}+\mu^*\sum^d_{i=1}H_i)$ is log canonical by ACC of log canonical thresholds (see \cite{HMX14}) as before. This implies that for any $t\in \bC^d$, any component of the fiber $(\cY_k)_t$ is of dimension $\dim(X)$ by \cite[Proposition 39]{dFKX17} as it is contained in $\mu^*\sum^d_{i=1}H_i$. 
Moreover, $(\cY_{k})_t\to X$ is birational, since if $(\cY_k)_t$ has another component, then it is contained in $E$ and $\mu^*\sum^d_{i=1}H_i$, thus violates  \cite[Proposition 39]{dFKX17} again. 

Denote by  $\fa_i={\mu_d}_*\cO_{\cY_k}(-i\cE)$. For any $j\in \mathbb N$, we have an exact sequence 
$$0\to \cO_{\cY_k}(-(i+1)\cE)\to \cO_{\cY_k}(-i\cE)\to Q_i\to 0$$ for a sheaf $Q_i$ supported on $\cE$. Since $\cE$ is $\mathbb{Q}$-Cartier, we know $Q_i$ is Cohen-Macaulay (see \cite[Proof of Proposition 5.26]{KM98}), thus it is flat over $\bC^d$ as it is equi-dimensional. By the vanishing theorem, we can pushforward the above exact sequence by $\mu_d$ to get
$$0\to \fa_{i+1}\to \fa_i \to {\mu_d}_*(Q_i)\to 0.$$ 
Since for any $t\in \bC^*$, $R^j{(\mu_d)_t}_*(Q_i)_t=0$ for any $j>0$, by base change theorem (see e.g. \cite[Theorem III.12.11]{Har77}), this implies  
$${\mu_d}_*(Q_i)\otimes k(t)={\mu_d}_*(Q_i\otimes k(t))= {(\mu_d)_t}_*(-i\cE_t)/{(\mu_d)_t}_*(-(i+1)\cE_t)$$ and we inductively get $\fa_i\otimes k(t)={(\mu_d)_t}_*\cO_{(\cY_k)_t}(-i\cE_t)$ for all $i\in \bN$. Then we can take the relative extended $k[t_1,...,t_d]$-Rees algebra $\bigoplus_{i\in \mathbb{Z}} \fa_i\cdot t^{-i}$, which yields is $\til{T}$-equivariant family $\frak X_{d+1}$ over $\bC^{d+1}$.
The above discussion says the construction commutes with any base change, i.e. for a any $t\in \bC^d$, if we consider the base change $\frak X_{d+1}\times_{\bC^{d+1}}(\{t\}\times \bC)$, we get exactly the degeneration induced by $(\cY_k)_t\to X$. In particular, we can choose $(X_1,D_1)$ to be the fiber over $0\in \bC^{d+1}$ and $(S_1,B_1)$ is its $\bC^*$-quotient. And $(S_1,B_1)$ is K-semistable, since it is a special degeneration of $(S,B)$ with generalized Futaki invariant 0.
\end{proof}

\subsection{Proof of main results for log Fano pairs}
\begin{proof}[Proof of Theorem \ref{t-uniqueness}]
Given  a K-semistable log Fano pair $(S^{(0)},B^{(0)}):=(S,B)$. If it is not K-polystable, then by \cite{LX14} we know it has special degenerations to  log Fano pairs which are not isomorphic to $(S,B)$, with  generalized Futaki invariant 0. Let $(S^p,B^p)$ be a log Fano pair, which among all possible special degenerations of $(S,B)$, admits a faithful torus action of the maximal dimension. We claim $(S^p,B^p)$ is K-polystable.  If this is not true, by Lemma \ref{l-rank}, $(S^p,B^p)$ has a K-semistable degeneration $(S^{( 1)},B^{(1)})$ which admits a faithful torus action of a larger dimension. However, by the proof of Lemma \ref{l-semistable}, we can indeed degenerate $(S,B)$ to $(S^{( 1)},B^{(1)})$, which is a contradiction. 



The uniqueness directly follows from  Theorem \ref{t-doubledeg}, as any test configuration $(\cS,\mathcal{B})$ which degenerates $(S,B)$ to a K-polystable log Fano pair $(S_0,B_0)$ automatically satisfies ${\rm Fut}(\cS,\mathcal{B})=0$.
\end{proof}


\begin{proof}[Proof of Theorem \ref{t-equivariant}]It is known from \cite{LX16} that to check K-semistablity, we only need to check the $T$-equivariant special test configurations. Then from K-semistability to K-polystability, it follows from Lemma \ref{l-rank}.
\end{proof}

\section{General case of log Fano cones}
In Section 4.1, we will generalize the techniques in Section \ref{sec-logpair} to the case of log Fano cones. This allows us to get weakly special test configurations with isomorphic central fibres and generalized Futaki invariants 0, under similar assumption as in Theorem \ref{t-doubledeg}. In Section 4.2, we prove that these weakly special test configurations are indeed special. We prove this fact by generalizing the last step in \cite{LX14} to the setting of  log Fano cone singularities, including the irregular case.  We complete the proof of Donaldson-Sun's conjecture (Theorem \ref{t-DSconj}) and Theorem \ref{t-uniquecone} on existence/uniqueness of K-polystable degenerations in Section \ref{ss-completionofproof}.

\subsection{Common degenerations of log Fano cones}\label{ss-fanocone}


Fix a K-semistable log Fano cone $(X, D, \xi_0)$ with a torus action by $T\cong (\mathbb{C}^*)^r$. 
Then $\wt_{\xi_0}$ is a minimizer of $\hvol_{X,D}$ by Theorem \ref{p-m=k}. Assume that $(\cX^{(i)}, \cD^{(i)}, \xi_0; \eta^{(i)})$  $(i=1,2)$ are two special degenerations of $(X, D, \xi_0)$ to $(X^{(i)}_0, D^{(i)}_0, \xi_0), (i=1,2)$ respectively.
Recall that $\xi_0$ on $\cX^{(i)}$ is just given by the natural extension of $\xi_0$ on $X\times \mathbb{C}^*$.  By assumption $\eta^{(i)}$ has an integral coweight which can be written as the form $(\cdot,1)$ with respect to the decomposition of $\tilde{T}:=T\times \mathbb{C}^*\cong(\mathbb{C}^*)^{r+1}$. Note that the central fibers $(X^{(i)}_0, D^{(i)}_0)$ $(i=1,2)$ admit $\tilde{T}$-actions generated by $T$ and $\la\eta^{(i)}\ra$.

\medskip


\begin{thm}\label{t-doublecone}
Let $(X, D, \xi_0)$ be a K-semstable log Fano cone. With the notations in the above paragraph, assume ${\rm Fut}(\mathcal{X}^{(1)},\cD^{(1)}, \xi_0; \eta^{(1)})=0$ and ${\rm Fut}(\mathcal{X}^{(2)},\cD^{(2)},\xi_0; \eta^{(2)})=0$. Then there are weakly special test configurations $(\cX'^{(i)},\cD'^{(i)},\xi_0; \eta'^{(i)})$ of $\left(X^{(i)}_0, \cD^{(i)}_0, \xi_0 \right)$ $(i=1,2)$ with isomorphic central fibers such that
${\rm Fut}(\cX'^{(i)},\cD'^{(i)},\xi_0 ;\eta'^{(i)})=0$ for $i=1,2$.
\end{thm}

We follow a similar strategy as in Section \ref{s-Qfano}. 

\begin{proof}  We first claim  that $(X_0^{(1)},D^{(1)}_0, \xi_0)$ is K-semistable. 
If not, then there is a special test configuration $(\cX''^{(1)}_0,\cD''^{(1)}_0,\xi_0;\eta''^{(1)})$ with $$\Fut(\cX''^{(1)}_0,\cD''^{(1)}_0,\xi_0;\eta''^{(1)})<0,$$ which degenerates 
$(X_0^{(1)},D^{(1)}_0, \xi_0)$ to  $(X_0''^{(1)},D''^{(1)}_0, \xi_0)$. 
Then we claim there is a test configuration $(\til{\cX}''^{(1)}_0,\til{\cD}''^{(1)}_0,{\xi}_0; k\eta^{(1)}+ \eta''^{(1)})$ for some $k\gg 0$ degenerating 
$(X,D, \xi_0)$ to  $(X_0''^{(1)},D''^{(1)}_0, \xi_0)$ with the generalized Futaki invariant 
\begin{eqnarray*}
& &\Fut(\til{\cX}''^{(1)}_0,\til{\cD}''^{(1)}_0,{\xi}_0; k\eta^{(1)}+ \eta''^{(1)})\\
& =&k\cdot \Fut(\cX^{(1)}, \cD^{(1)}, \xi_0; \eta^{(1)})+\Fut(\cX''^{(1)}_0,\cD''^{(1)}_0,\xi_0;\eta''^{(1)})\\
& <&0,
\end{eqnarray*}
which is contradictory to our assumption $(X,D,\xi_0)$ is K-semistable. 
Here we used the linearity of the generalized Futaki invariant from Lemma \ref{lem-Futlinear} as in the log Fano cone case.
Denote by $\sigma: \bA^1\to\til{ \cX}_0^{(1)}$ the section of vertices and similarly $\sigma'': \bA^1\to\til{ {\cX}}_0''^{(1)}$. To see the existence of such test a configuration we fix a rational vector $\xi_0'\in {N_\bQ^+}$,  and take the quotient, we get $(\cS^{(1)}_0, \mathcal{B}^{(1)}_0)$ and $(\cS''^{(1)}_0, \mathcal{B}''^{(1)}_0)$ which give special test configurations of  the log Fano pairs obtained as the $\la \xi'_0 \ra$-quotients of  $(X,D)$ and $(X^{(1)}_0, D^{(1)}_0)$.
Since $[\eta^{(1)},\eta''^{(1)}]=0$, the proof of Lemma \ref{l-semistable} shows that there is a test configuration  
that degenerates the $\la \xi'_0 \ra$-quotient of $(X,D)$ to that of  
$(X_0''^{(1)},D''^{(1)}_0)$. Then we can take the cone back to get $(\til{\cX}''^{(1)}_0,\til{\cD}''^{(1)}_0,{\xi}_0)$. (Also see \cite[Section 4.2]{LX17} for a direct construction.)

\medskip

Applying the diophantine approximation (cf. \cite[Lemma 2.7]{LX17}) of the coordinates of $\xi_0$, we can choose a sequence of  integral vectors $\{\til{\xi}_k\}$ such that $|\til{\xi}_k-k \xi_0|\le A$ for any constant $A>0$ where $k$ is an infinite sequence of increasing positive integers. Consider the Koll\'{a}r component $E_k$ determined by $\til{\xi}_k+\eta^{(1)}$ over $x\in (X,D)$ (it is a Koll\'ar component by Lemma \ref{l-n1}). 
Let $I_k=\fa_m(\ord_{E_k})$ for a sufficiently divisible $m$ depending on $k$. Let $c_k=\lct(I_k; X,D)$ and consider:
$$
f(\frac{1}{k})=\hvol(\ord_{E_k})=\mult(I_k)\cdot c_k^n.
$$
Let $\tilde{T}=\langle \xi_0, \eta^{(1)}\rangle\cong (\bC^*)^{r+1}$ be the torus generated by $\xi_0$ and $\eta^{(1)}$, and $\tilde{N}={\rm Hom}(\bC^*, \tilde{T})$ be the coweight lattice of $\til{T}$. 

Since $(X^{(1)}_0, D^{(1)}_0, \xi_0)$ is K-semistable, 
$$\hvol(\xi):=\hvol_{(X^{(1)}_0, D^{(1)}_0)}(\wt_\xi)$$ 
is a smooth function of $\xi\in \til{N}^{+}_{\bR}$ and obtains the minimum at $\xi_0$ (see Theorem \ref{p-m=k}). By \eqref{eq-Futhvol}, this also implies that for any rational vector $\eta_1\in \tilde{N}_{\bR}$,  
\begin{eqnarray}\label{e-futd}
 \frac{d\ \hvol(\wt_{\xi_0+t\eta_1})}{dt}\Big|_{t=0}=C\cdot \Fut(X^{(1)}_0\times \mathbb{C},D^{(1)}_0\times \mathbb{C},\xi_0; \eta_1)  =0
 \end{eqnarray}

 By Taylor's Remainder Theorem there is a neighborhood $U$ of $\xi_0\in  \tilde{N}_\bR$ and a positive constant $C>0$ (independent of $\xi$) such that, 
for any $\xi\in U$, we have the inequality:
\[
\hvol(\xi_0)\le \hvol(\xi)\le \hvol(\xi_0)+C |\xi-\xi_0|^2.
\]
Note that $f(\frac{1}{k})=\hvol(\frac{1}{k}\til{\xi}_k+\frac{1}{k}\eta^{(1)})$ by the rescaling invariance of the normalized volume. Because
$
\left|\frac{1}{k}\til{\xi}_k+\frac{1}{k}\eta^{(1)}-\xi_0\right|\le C'k^{-1}
$
for $C'>0$ independent of $k$, there exists $K_0\gg 1$ such that for any $k\ge K_0$, $f(\frac{1}{k})=f(0)+O(\frac{1}{k^2})$. 

Then the same argument as in the case of the log Fano varieties using \cite[Corollary 1.4.3]{BCHM10}, shows that we can  find  $\mu^{(2)}\colon \cY^{(2)}_k\to \mathcal{X}^{(2)}$  a  morphism over $\mathbb{C}$ with a divisor $\mathcal{E}^{(2)}_k$ such that $-\mathcal{E}^{(2)}_k$ is ample over $\mathcal{X}^{(2)}$ and $(\cY^{(2)}_k, \cE^{(2)}_k) \times_{\bC}\bC^*=(Y_k, E_k)\times \bC^*$ where the isomorphism is equivariant with respect to the $\bC^*$-action generated by $\eta^{(2)}$.
Moreover, fixed any arbitrarily small $\epsilon$, we can choose $k$ sufficiently large such that the log discrepancy of $E_k$ with respect to $(X , D+(1-\delta)c_k\cdot I_k)$ is less than $\epsilon$, and $(X^{(2)}_0,D_0^{(2)}+(1-\delta)c_k\cdot \bin(I_k))$ is log canonical for a suitable choice of small $\delta$ (see \eqref{e-doublecontrol}). The it follows from the ACC of log canonical thresholds (see \cite{HMX14}) that $(\cY^{(2)}_k, \cE^{(2)}_k+(\mu^{(2)})^{-1}_*\cD^{2}+(\cY^{(2)}_k)_t)$ is log canonical for any $t\in \bC$. 

The relative extended Rees algebra gives a family $(\frak{X}, \frak{D})$ over $\mathbb{C}^2$, such that over $\mathbb{C}\times \{t\}$ (resp. $\{t\}\times \mathbb{C}$) ($t\neq 0$), it gives a family which is isomorphic to $(\cX^{(1)}, \cD^{(1)})$ (resp. $(\cX^{(2)}, \cD^{(2)})$). The family $(\frak{X}, \frak{D})$ admits a $(\mathbb{C}^*)^{r+2}$-action.

By Lemma \ref{l-tc}, we get {\it weakly special} test configurations 
\[
(\cX'^{(i)},\cD'^{(i)},\xi_0; \eta'^{(i)}) \mbox{ of }(X^{(i)}_0, D^{(i)}_0, \xi_0) \ \ (i=1,2)
\] 
with an isomorphic central fiber $(X'_0,D'_0,\xi_0)$. 

We claim that the generalized Futaki invariants $\Fut(\cX'^{(i)},\cD'^{(i)},\xi_0; \eta'^{(i)})$ are 0. Indeed, by the construction, 
$$\left.(\frak{X}, \frak{D},\xi_0; \eta^{(1)})\right|_{\bC\times\{t\}}\cong (\cX^{(1)},\cD^{(1)}, \xi_0; \eta^{(1)} ).$$ 
It follows from our assumption that 
$$\Fut(\cX^{(1)}, \cD^{(1)}, \xi_0; \eta^{(1)})=\Fut(X^{(1)}_0, D^{(1)}_0, \xi_0; \eta^{(1)})=0.$$ 
By the flatness of the weighted piece and $(\bC^*)^2$ equivariance, we get for any $t$, 
$$\vol_{X^{(1)}_0}(\xi_0+t\eta^{(1)})=\vol_{X'_0}(\xi_0+t\eta^{(1)}),$$
which implies that  $\Fut(X'_0, D'_0, \xi_0; \eta^{(1)})=0$ (see \eqref{e-futd}). Similarly, we have $\Fut(X'_0, D'_0, \xi_0; \eta^{(2)})=0$.

\end{proof}

By the above result, we obtain two {\it weakly special} test configurations $(\cX'^{(i)}, \cD'^{(i)}, \xi_0; \eta'^{(i)})$ with isomorphic central fibres 
 $(X'^{(1)}_0, D'^{(1)}_0, \xi_0)\cong (X'^{(2)}_0, D'^{(2)}_0, \xi_0)$ and zero generalized Futaki invariants. In the next subsection, we are going to show that $(\cX'^{(i)}, \cD'^{(i)}, \xi_0; \eta'^{(i)})$ are indeed special test configurations. 

\subsection{Vanishing Futaki invariants and special degenerations}\label{ss-special}
We will prove Proposition \ref{p-specialcone}, which says to test K-(semi, poly)stability of a log Fano cone, although in our definition we only require to test on all special degenerations,  it is indeed the same to test on all weakly special test configurations. A tool we will use is to write the generalized Futaki invariant of a weakly special configuration as the derivative of the leading coefficient of the index character (see \cite{MSY08, CS12, CS15}).

If there are two $T$-equivariant weakly special test configurations 
$$(\cX^{(i)}={\rm Spec}(\cR^{(i)}),\cD^{(i)},\xi_0;\eta) \mbox{ \ \ of a K-semistable log Fano cone $(X,D,\xi_0)$},$$ 
with $\Fut(\cX^{(i)},\cD^{(i)},\xi_0;\eta)=0$, by Lemma \ref{l-filtration}, we know $\cX^{(i)}$ is associated to a graded sequence of ideals $\fa^{(i)}_{\bullet}$ which we can assume to be primary (see Remark \ref{r-primary}) as
\begin{eqnarray*}
& &\Fut(\cX^{(i)},\cD^{(i)},\xi_0;m\xi_0'+\eta)\\
 &=&\Fut(\cX^{(i)},\cD^{(i)},\xi_0;m\xi_0')+\Fut(\cX^{(i)},\cD^{(i)},\xi_0;\eta)\\ 
 &=&0,
\end{eqnarray*}
where $\Fut(\cX^{(i)},\cD^{(i)},\xi_0;\xi_0')=0$ follows from the K-semistability of $(X,D)$.
Moreover, since the test configuration is weakly special, by  Lemma \ref{l-tc} there is indeed a birational morphism $\mu^{i}\colon Y^i \to X$ $(i=1,2)$ with a reduced exceptional divisor $E^{i}$ such that $(Y^{i},E^{i}+(\mu^{i})^{-1}_*D)$ is log canonical and $\fa^{(i)}_k=\mu^i_*(-kE^i)$.  Therefore, we can take a normalized graph $\mu^g\colon Y^g\to X$ of $Y^1\dasharrow Y^2$ over $X$ with $p_i\colon Y^g\to Y^i$. 
Then for any pair $(a,b)$ such that $\left(-ap_1^*(E^1)-bp_2^*(E^2)\right)$ is integral, by Definition \ref{d-model}, we can consider the test configuration $\cY_{a,b}$ of $(X,D,\xi_0)$ induced by $\left(-ap_1^*(E^1)-bp_2^*(E^2)\right)$.

We apply the $T$-equivariant index character (see \cite[Section 4]{CS12} for more details) 
for any $\xi\in \tilde{N}^+_{\bR}\subset \tilde{N}_{\mathbb{R}}\cong \mathbb{R}^{r+1}$ where $\tilde{N}^+_{\bR}$ is the Reeb cone of the $\tilde{T}=T\times\bC^*$-action and $t\in \bC$ with the real part $\Re(t)>0$, and define:
\begin{equation}
F(a,b;\xi, t)=\sum_{\alpha\in \tilde{\frak{t}}^+_{\bR}}e^{-t\alpha(\xi)}\dim R^{a,b}_\alpha(v),
\end{equation}
where $R^{a,b}$ is the ring of the special fiber of $\cY_{a,b}$.

Now if we fix a prime integral vector $\xi\in t^+_{\bR}\cap N$ such that 
$$\la \xi \ra\mbox{-quotient of }(\cX^{(i)},\cD^{(i)})=(\cS^{(i)}, B^{(i)},\cL^{(i)})  \ \ (i=1,2)$$ give test configurations of the $\la \xi\ra$-quotient $(S,B)$ of $(X,D)$ with polarizations $\cL^i$. Then the quotient of $\cY_{a,b}$ by $\la\xi\ra$ is given by the normalized graph $\cS_{a,b}$ of $\cS^{(1)}\dasharrow \cS^{(2)}$ with morphisms $\phi_i\colon \cS_{a,b} \to \cS^{(i)}$ and the polarization is given by $a\phi_1^*\cL^{(1)}+b\phi_2^*\cL^{(2)}$.

\medskip 
The following  statement essentially follows from \cite[Theorem 4.10]{CS12}.
\begin{prop}\label{p-polynomial}
For a fixed $\xi\in  \tilde{N}^+_{\bR}$ the index character $F(a,b;\xi, t)$ has a meromorphic extension to $\bC$ with poles along the imaginary axis. Near $t=0$ it has a Laurent series expansion:
\begin{equation}
F(a,b;\xi, t)=\frac{a_0(a,b;\xi)n!}{t^{n+1}}+\frac{a_1(a,b;\xi)(n-1)!}{t^n}+\cdots,
\end{equation}
where $a_0(a,b;\xi)$ is a polynomial of $(a,b)$ whose coefficients depends smoothly on $\xi\in  \tilde{N}^+_{\bR}$.
\end{prop}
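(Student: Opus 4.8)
The plan is to separate the three assertions---meromorphic continuation, the shape of the Laurent expansion, and the polynomiality of $a_0$---treating the last as the only substantive point. For the first two, I would invoke the general theory of the $T$-equivariant index character from \cite{CS12} (see also \cite{MSY08, CS15}). Since $\cY_{a,b}$ is the $\bQ$-Gorenstein test configuration of the log Fano cone $(X,D,\xi_0)$ attached to the model $Y^g$ by Definition \ref{d-model}, its central fibre ring $R^{a,b}$ is the coordinate ring of an affine variety of dimension $n+1$ carrying a good action of the enlarged torus (with cocharacter lattice $\tilde N$), the extra $\bC^*$-factor recording the degeneration grading. Writing the Hilbert series of $R^{a,b}$ as a rational function in the variables $e^{-t\beta(\xi)}$ immediately yields the meromorphic continuation of $F(a,b;\xi,t)$ to $\bC$ with poles confined to the imaginary axis, a pole of order exactly $n+1$ at $t=0$, and the asserted Laurent expansion. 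Its leading coefficient is, up to the factor $n!$, the weighted volume $\vol_{Y^{a,b}_0}(\xi)$ of the central fibre $Y^{a,b}_0$, and the smoothness of all the Laurent coefficients in $\xi\in\tilde{\frak{t}}^+_{\bR}$ is part of the same package---it is the smoothness of the volume function already recorded above (see \cite{CS12, LX17}).

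It remains to prove that $a_0(a,b;\xi)$ is a polynomial in $(a,b)$, and here I would first dispose of the case where $\xi$ is rational, i.e. quasi-regular. Then $\la\xi\ra\cong\bC^*$ and one may form the Seifert quotient $V^{a,b}_\xi:=Y^{a,b}_0/\la\xi\ra$, a projective variety of dimension $n$ equipped with an ample $\bQ$-polarization $\cL^{a,b}_\xi$; by the Seifert-bundle volume formula (see \cite{MSY08, Kol13}) the weighted volume equals $c_\xi\cdot(\cL^{a,b}_\xi)^{\cdot n}$ for a positive constant $c_\xi$ depending only on $\xi$. The crucial observation is that $\cL^{a,b}_\xi$ is \emph{linear} in $(a,b)$: it is cut out by the class $-D_{a,b}:=-a\,p_1^*E^1-b\,p_2^*E^2$ on the graph $Y^g$ and then descended to $V^{a,b}_\xi$. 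For $\xi=\xi_0$ this descent is exactly the polarization $a\,\phi_1^*\cL^1+b\,\phi_2^*\cL^2$ on the normalized graph $\cS_{a,b}$ described above, and the same argument applies verbatim for any rational $\xi$. Consequently $a_0(a,b;\xi)=\tfrac{c_\xi}{n!}\,(a\,A^\xi_1+b\,A^\xi_2)^{\cdot n}$ for two fixed nef classes $A^\xi_1,A^\xi_2$ on $V^{a,b}_\xi$, which is a homogeneous polynomial of degree $n$ in $(a,b)$ by the multilinearity of the intersection product.

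Finally I would pass from rational to arbitrary $\xi$ by density. The rational vectors are dense in the open cone $\tilde{\frak{t}}^+_{\bR}$, and for each of them $a_0(\,\cdot\,;\xi)$ is a homogeneous polynomial of the \emph{fixed} degree $n$; since such polynomials form a finite-dimensional space, their coefficients are recovered by interpolation from finitely many evaluations $a_0(a_i,b_i;\xi)$. Each evaluation is smooth in $\xi$ by the first step, so the interpolated coefficients extend to smooth functions on all of $\tilde{\frak{t}}^+_{\bR}$; then, for every fixed $(a,b)$, the two continuous functions of $\xi$---namely $a_0(a,b;\xi)$ and the value of the interpolated polynomial---agree on the dense rational set and hence everywhere. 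This gives polynomiality in $(a,b)$ with coefficients smooth in $\xi$. I expect the main difficulty to lie in the second step: verifying, for an \emph{arbitrary} rational Reeb direction $\xi$ rather than only the distinguished $\xi_0$ whose quotient is $\cS_{a,b}$, that the polarization on the Seifert quotient $V^{a,b}_\xi$ genuinely descends from $-D_{a,b}$ and is therefore linear in $(a,b)$, and that it is nef and big so that the weighted volume is computed by its top self-intersection rather than merely bounded by it.
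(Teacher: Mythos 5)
Your proposal is correct in substance, but the mechanism you use for the key step --- passing from rational to irrational $\xi$ --- is genuinely different from the paper's. The paper works inside the proof of \cite[Theorem 3]{CS12}: it writes the index character as an equivariant Hilbert series
$F(a,b;\xi,t)=e^{-t(\xi_1\alpha_1+\cdots+\xi_s\alpha_s)}\,HN_{a,b}(e^{-t\xi_1},\dots,e^{-t\xi_s})\big/\prod_{j=1}^N\bigl(1-e^{-t\langle\xi,w_j\rangle}\bigr)$,
observes that $(a,b)$ enters only through the Hilbert numerator $HN_{a,b}$ --- which is an invariant of the ring $R^{a,b}$ and does not depend on $\xi$ --- and then uses the quasi-regular case (via \cite[Proposition 4.1]{CS12}, where $a_0$ is an intersection number of the linear-in-$(a,b)$ polarization $a\phi_1^*\cL^1+b\phi_2^*\cL^2$ on $\cS_{a,b}$) for a \emph{single} rational $\xi$ to conclude that $HN_{a,b}(1,\dots,1)$ is a polynomial in $(a,b)$; polynomiality for every $\xi$ then follows at once, and smoothness of the coefficients in $\xi$ comes for free from the explicit denominator. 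You instead prove the rational case for \emph{all} rational Reeb directions, extract a uniform homogeneous degree bound $n$, and transfer to irrational $\xi$ by interpolation in the $(n+1)$-dimensional space of homogeneous degree-$n$ polynomials plus density and continuity in $\xi$. Your route is softer: it uses only statement-level inputs (Laurent expansion, smoothness in $\xi$, the rational-case intersection formula) rather than the internal structure of the CS12 proof, at the price of needing the quotient computation for every quasi-regular direction and an a priori degree bound; the paper's route is more structural, needs only one rational direction, and in effect yields the separation $a_0(a,b;\xi)=P(a,b)\cdot g(\xi)$ with $g$ smooth (the paper's sketch glosses over the case $HN_{a,b}(1,\dots,1)=0$, where the leading coefficient involves derivatives of $HN_{a,b}$; this is handled in \cite{CS12}). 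Two small repairs to your write-up: the continuity of $\xi\mapsto a_0(a,b;\xi)$ for fixed $(a,b)$ is safer taken from the rational-function form of $F$ (each Laurent coefficient is real-analytic in $\xi$) than from the volume-smoothness lemma, since that lemma is stated for reduced affine varieties and the central fibre of $\cY_{a,b}$ is not obviously reduced for general integral $(a,b)$; and the ``main difficulty'' you flag --- descent and linearity of the polarization for an arbitrary rational $\xi$ --- is exactly what the paragraph preceding the proposition already dispatches, since it is stated for an arbitrary prime integral vector $\xi$ in the Reeb cone (not only $\xi_0$): the quotient of $\cY_{a,b}$ by $\la\xi\ra$ is the normalized graph with polarization $a\phi_1^*\cL^1+b\phi_2^*\cL^2$, which is linear in $(a,b)$ and relatively nef by construction.
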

\begin{proof}
 It follows from \cite[Proposition 4.3]{CS12} that when $\xi$ is rational, then $a_0$ coincides with the leading term of the total weight on the test configuration $\cS_{a,b}$ constructed from the quotient log Fano pair. Since it can be represented by an intersection formula, in particular, it is a polynomial of $a$ and $b$ by \cite{Wan12, Oda13}.

 Denote by $s=r+1$. By the proof of \cite[Theorem 4.10]{CS12}, we know 
$$F(a,b; \xi, t)=\frac{e^{-t(\xi_1\alpha_1+\cdots \xi_s\alpha_s)}\cdot {HN_{a,b}(e^{-t\xi_1},...e^{-t\xi_s})}}{\Pi^N_{j=1}(1-e^{-t(\xi_1w_{1j}+\cdots+ \xi_sw_{sj})})},$$
where $\xi=(\xi_1,...,\xi_s)\in {\tilde{N}^+_{\bR}}$, $(\alpha_1,...,\alpha_s)\in \mathbb{Z}^s$ and $w_{ij}$ ($1\le i \le s$, $1\le j\le N$) are real numbers. 
The leading term of the Laurent expansion is the same as the leading term of
$$\frac{HN_{a,b}(1,...,1)}{\Pi^N_{j=1}(1-e^{-t(\xi_1w_{1j}+\cdots+ \xi_sw_{sj})})}\ \ .$$ Since $a,b$ only appear in the part $HN_{ab}(1,...,1)$  which does not depend on $\xi$, and from the case that $\xi$ is rational, we know that $HN_{a,b}(1,...,1)$ is a polynomial of $(a,b)$, which implies $a_0$ is a polynomial of $(a,b)$.
\end{proof}

With all these preparations,  we can prove Proposition \ref{p-specialcone} which is a generalization of \cite[Theorem 4]{LX14} from the quasi-regular case to the general case of  an arbitrary log Fano cone singularity. Although we expect the full results of special degeneration in \cite{LX14} can be extended, here we only need the last step of the argument. 

\begin{prop}\label{p-specialcone}
Let $(\mathcal{X}, \cD,\xi_0; \eta)$ be a weakly special test configuration of a log Fano cone singularity $(X, D, \xi_0)$. Then we can find a special test configuration $(\cX',\cD',\xi_0;\eta')$ and a positive integer $m$ such that
$${\rm Fut}(\mathcal{X}',\cD',\xi_0; \eta')\le m\cdot {\rm Fut}(\cX,\cD,\xi_0;\eta),$$
and the strict inequality holds if  $(\cX,\cD,\xi_0; \eta)$ is not a special test configuration. 
\end{prop}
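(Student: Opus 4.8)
The plan is to mimic the final step of \cite{LX14}, but to replace the quotient construction (which is unavailable when $\xi_0$ is irrational) by a direct analysis of the equivariant index character supplied by Proposition \ref{p-polynomial}. First I would dispose of the trivial case: if $(\cX,\cD,\xi_0;\eta)$ is already special we take $(\cX',\cD',\xi_0;\eta')=(\cX,\cD,\xi_0;\eta)$ and $m=1$. So assume it is weakly special but not special, which by Definition \ref{d-stc} means the central fibre $(X_0,D_0)$ is log canonical but not klt. By Lemma \ref{l-tc} the given test configuration is encoded by a birational model $\mu^1\colon Y^1\to X$ with reduced exceptional divisor $E^1$ for which $(Y^1,E^1+\mu^1_*D)$ is log canonical and $\fa^{(1)}_k=\mu^1_*\cO(-kE^1)$. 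Since $(X_0,D_0)$ fails to be klt there is a log canonical place lying over the vertex; I would extract a Koll\'ar component $E^2$ over $(X,D,x)$ (Definition \ref{def-Kollar}) realizing this behaviour, which by Lemma \ref{l-tc} produces a genuine \emph{special} test configuration $(\cX^{(2)},\cD^{(2)},\xi_0;\eta^{(2)})$ with model $\mu^2\colon Y^2\to X$.

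Next I would invoke the two-model machinery assembled before the statement: form the normalized graph $\mu^g\colon Y^g\to X$ of $Y^1\dashrightarrow Y^2$ with $p_i\colon Y^g\to Y^i$, and for integral $(a,b)$ consider the test configuration $\cY_{a,b}$ induced by $-a\,p_1^*E^1-b\,p_2^*E^2$ as in Definition \ref{d-model}. The central fibre of $\cY_{a,b}$ carries the enlarged torus action, and its generalized Futaki invariant can be read off from the leading Laurent coefficient $a_0(a,b;\xi)$ of the index character $F(a,b;\xi,t)$: the volume of the central fibre equals $a_0(a,b;\xi)$ up to a universal constant, and $\Fut(\cY_{a,b})$ is the corresponding normalized directional derivative $D_{-T_{\xi_0}(\eta)}\vol/\vol$ in the test-configuration direction. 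The crucial input is Proposition \ref{p-polynomial}, which guarantees that $a_0(a,b;\xi)$ is a \emph{polynomial} in $(a,b)$ with coefficients depending smoothly on $\xi\in\tilde{\ft}^+_\bR$; this is exactly what allows the intersection-number computation of \cite{LX14} to go through for irrational $\xi_0$, since it no longer requires passing to a quotient log Fano pair.

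With this in hand I would study the slice through $(a,b)=(1,0)$, where $\cY_{1,0}\cong\cX$ recovers the given configuration, so $\Fut(\cY_{1,0})=\Fut(\cX,\cD,\xi_0;\eta)$. The heart of the argument is to show that moving toward $E^2$, i.e. increasing $b$, does not increase the Futaki invariant, with strict decrease precisely because $E^2$ is a log canonical place of the non-klt fibre (so $A_{(\cX,\cX_0+\cD)}(E^2)=0$); concretely one differentiates the now-polynomial expression for $\Fut(\cY_{a,b})$ and checks the sign at $(1,0)$. Then for a suitable rational $(a,b)$ with $b>0$, inversion of adjunction shows the central fibre of $\cY_{a,b}$ becomes klt, so $\cY_{a,b}$ is special; clearing denominators to make the polarizing divisor integral rescales $\eta'$ and yields the positive integer $m$ with $\Fut(\cX',\cD',\xi_0;\eta')\le m\cdot\Fut(\cX,\cD,\xi_0;\eta)$, strict when the original is not special. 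I expect the main obstacle to be exactly this sign/monotonicity step: extracting the polynomiality of $a_0$ from the index character is clean, but pinning down that the derivative of the index-character-derived Futaki invariant along the $(a,b)$-family is nonpositive --- and simultaneously that the central fibre genuinely becomes klt for the chosen rational direction --- requires combining the log canonical geometry of $(X_0,D_0)$ with the strict convexity of $\vol(\xi)$, and this is where the passage to the irregular case is most delicate.
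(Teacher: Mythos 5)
Your skeleton matches the paper's proof: handle the trivial case, extract a $T$-equivariant Koll\'ar component from a log canonical place of $(Y,E+\mu_*^{-1}D)$ (the paper cites \cite[Proposition 2.10]{LX16} for this equivariant extraction, which your sketch leaves implicit), run the two-model family $\cY_{a,b}$ through the equivariant index character, and use the polynomiality of $a_0(a,b;\xi)$ from Proposition \ref{p-polynomial} to bypass the quotient construction when $\xi_0$ is irrational. But there are two genuine gaps. First, your construction of the special test configuration fails: you claim that for a suitable rational $(a,b)$ with $b>0$ the central fibre of $\cY_{a,b}$ becomes klt by inversion of adjunction. As long as $a>0$, the divisor $a\,p_1^*E^1+b\,p_2^*E^2$ keeps every component of $E^1$ with positive coefficient, so the induced degeneration retains the non-klt (typically even reducible) central-fibre contributions coming from the log canonical places of $E^1$; no intermediate member of the family is special. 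The correct special configuration is the pure endpoint $\cY_{0,1}$, induced by the Koll\'ar component alone, which Lemma \ref{l-tc} certifies as special; the integer $m$ is the base-change factor making the coefficient of the Koll\'ar component in the pullback of $mE$ integral, not a rescaling of a mixed polarizing divisor.

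Second, once the target is the endpoint of the segment $(a,b)=(1-t,t)$, checking the sign of the derivative only at $(1,0)$, as you propose, gives local decrease and says nothing about $\Fut(\cY_{0,1})$. The paper writes $\Fut(\cX',\cD',\xi_0;\eta')-\Fut(\cX,\cD,\xi_0;\eta)=\int_0^1\frac{d}{dt}f(t;\xi_0)\,dt$ with $f(t;\xi_0)=D_{-T_{\xi_0}\eta}\,a_0(1-t,t;\xi_0)$, and must show the integrand is nonpositive on \emph{all} of $[0,1]$. The mechanism is not the strict convexity of $\vol(\xi)$ in the Reeb direction, which you invoke and which concerns variation of $\xi$ rather than of $(a,b)$: it is a reduction to rational $\xi_0$, where the intersection-theoretic computation of \cite[Proposition 5]{LX14} on the quotient log Fano pair gives nonpositivity, followed by smoothness of the coefficients in $\xi$ (Proposition \ref{p-polynomial}) to extend to irrational $\xi_0$ by density. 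Strictness then comes from the explicit value $\left.\frac{d}{dt}f(t;\xi_0)\right|_{t=0}=-\frac{1}{2\vol_X(\wt_{\xi_0})}\sum_i c_i\vol_{E_i}(\wt_{\xi_0})$, where the $c_i$ are the coefficients of $p^*(K_{\cX}+\cD)-q^*(K_{\cX'}+\cD')$ on a common model of the two test configurations, and some component of the central fibre has $c_0>0$ precisely because $\cX$ is not special --- not, as you suggest, directly from the log discrepancy of $E^2$ being zero.
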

\begin{proof}
By Lemma \ref{l-tc}, the weakly special test configuration is induced by a $T$-equivariant morphism $\mu\colon Y\to X$, such that the reduced exceptional divisor $E$ is anti-ample over $X$ and $(Y,E+\mu_*^{-1}D)$ is log canonical.
Suppose $(\cX,\cD,\xi_0;\eta)$ is not special, then $(Y,E+\mu_*^{-1}D)$  is not plt. Therefore, by \cite[Proposition 2.10]{LX16}, we can find a $T$-equivariant  Koll\'ar component $S$ over $x\in (X,D)$ such that its log discrepancy with respect to $(Y,E+\mu_*^{-1}D)$ is 0.  Denote by $\mu'\colon Y'\to X$ the plt blow extracting precisely $S$. So by Lemma \ref{l-tc} again, it gives a special test configuration $(\mathcal{X}',\cD',\xi_0; \eta')$ and the base change factor $m$ (which we omit from now on) corresponds to a multiple such that the coefficient of $S$ in the pull back of $mE$ is integral. 

Let $Y^g$ be the normalized graph $Y\dasharrow Y'$ and $p\colon Y^g \to Y$, $p'\colon Y^g\to Y'$ the natural morphisms. Then for any pair of positive integers $(a,b)$, the divisor $bp^*E+ap'^*S$ are anti-ample, and therefore induces a test configuration $\cX_{a,b}$ by Lemma \ref{l-tc}. We take $a_0(a,b,\xi)$ as in Proposition \ref{p-polynomial}.

Now we claim that
$$D_{T_{\xi_0}(\eta)}a_0(1,0,\xi_0)=\Fut(\cX, \cD,\xi_0; \eta)> \Fut(\cX', \cD', \xi_0; \eta')=D_{T_{\xi_0}(\eta)}a_0(0,1,\xi_0).$$ 


To see this we write: 
$$p^*(K_{Y}+E+\mu_*^{-1}D)=p'^*(K_{Y'}+S+(\mu'_*)^{-1}D)+G,$$
and since the log discrepancy $A_{Y,E+\mu_*^{-1}D}(S)=0$, the negativity lemma {(see \cite[Lemma 3.39]{KM98})} implies that $G\ge 0$.

For any irreducible component  $E_i$ in ${\rm Supp}(G)$, denote by $c_i$ its coefficient in $G$. In particular, from our assumption that $\cX$ is not a special test configuration,  for some component $E_0$ contained in ${\rm Supp}(E)$, its coefficient $c_0$ is positive. Let $F_i$ be divisor on $X_0$ given by the orbifold cone $C(E_i,-E|_{E_i})$.

We take the previous construction for the two test configurations  $\cX$ and $\cX'$. By Proposition \ref{p-polynomial}, for a fixed $\xi_0$, if we define
$$f(t;\xi_0)=D_{T_{\xi_0}(\eta)}a_0(1-t,t;\xi_0),$$ 
then the difference of the generalized Futaki invariant is of the form 
\begin{eqnarray*}
\Fut(\cX', \cD',\xi_0; \eta')- \Fut(\cX, \cD, \xi_0; \eta)= \int^1_{0}\frac{d}{dt} f(t;\xi _0) \ dt.
\end{eqnarray*}

The integrand is  smooth in $[0,1]$, and the proof of \cite[Proposition 5]{LX14} shows that it is non-positive when $\xi_0$ is rational. Thus it is non-positive. We claim its value at 0 is
\begin{eqnarray}\label{e-compare}
\left.\frac{d}{dt} f(t;\xi_0)\right|_{t=0}= - \frac{1}{ \vol_{X}(\wt_{\xi_0})} \sum_i c_i\cdot \vol_{F_i}(\wt_{\xi_0})<0.
\end{eqnarray}
In fact to see \eqref{e-compare}, when $\xi_0$ is rational, we can compute on the quotient log Fano pair, and this is given in \cite[Page 217]{LX14}. Since both sides are smooth functions on $\xi_0$, we know that they must be equal to each other. 
\end{proof}

An immediate consequence is the following.
\begin{cor}\label{c-ws=s} 
For a K-semistable log Fano cone singularity $(X,D,\xi)$, if it has a weakly special test configuration $(\cX,\cD,\xi_0;\eta)$ with the generalized Futaki invariant being 0, then it  is a special test configuration, i.e., the central fiber is klt. 
\end{cor}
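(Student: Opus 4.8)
The plan is a short contradiction argument that feeds directly into Proposition \ref{p-specialcone}. Suppose, for contradiction, that $(\cX,\cD,\xi_0;\eta)$ is a weakly special test configuration of the cone $(X,D,\xi_0)$ with $\Fut(\cX,\cD,\xi_0;\eta)=0$ but that it is \emph{not} special, i.e. its central fibre $(X_0,D_0)$ fails to be klt.

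First I would apply Proposition \ref{p-specialcone} to this weakly special test configuration. Since by assumption it is not special, the proposition supplies a genuine special test configuration $(\cX',\cD',\xi_0;\eta')$ together with a positive integer $m$ for which the inequality is \emph{strict}:
$$\Fut(\cX',\cD',\xi_0;\eta')<m\cdot \Fut(\cX,\cD,\xi_0;\eta).$$
Substituting $\Fut(\cX,\cD,\xi_0;\eta)=0$ then forces $\Fut(\cX',\cD',\xi_0;\eta')<0$.

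This is the desired contradiction: $(\cX',\cD',\xi_0;\eta')$ is a \emph{special} test configuration of $(X,D,\xi_0)$, and K-semistability (Definition \ref{d-ksemiSE}) demands that the generalized Futaki invariant of every special test configuration be nonnegative. Hence the assumption that the central fibre is not klt is untenable, so the weakly special test configuration is in fact special.

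There is essentially no obstacle internal to this corollary: its entire content is packaged inside Proposition \ref{p-specialcone}, whose proof rests on expressing the generalized Futaki invariant through the $T$-equivariant index character (Proposition \ref{p-polynomial}) and reproducing the last step of \cite{LX14} in the possibly irregular cone setting. The only point I would take care to check is that the object produced by Proposition \ref{p-specialcone} is exactly of the class---special test configurations---against which K-semistability is defined, so that the comparison with zero is legitimate; this is automatic, since the proposition extracts a $T$-equivariant Koll\'ar component and hence a special degeneration with klt central fibre.
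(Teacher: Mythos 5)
Your proof is correct and is exactly the argument the paper intends: the corollary is stated as an immediate consequence of Proposition \ref{p-specialcone}, and your contradiction argument (strict inequality for a non-special weakly special test configuration forces a special test configuration with negative generalized Futaki invariant, violating Definition \ref{d-ksemiSE}) is the canonical way to spell that out. Your closing check---that the output of Proposition \ref{p-specialcone} lies in the class of special test configurations against which K-semistability is tested---is also the right point of care and is indeed automatic from the Koll\'ar component construction in its proof.
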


\subsection{Completion of the proofs of main theorems for log Fano cones}\label{ss-completionofproof}

\begin{proof}[Proof of Theorem \ref{t-uniquecone}]
The proof follows the same structure as the proof of Theorem \ref{t-uniqueness}, so we will only outline the steps. 

We first prove the existence of K-polystable degenerations.
 Let $(X^p,D^p,\xi_0)$ be a log Fano cone which admits a maximal dimensional torus $T^p$-action among all K-semistable special degeneration of $(X, D, \xi_0)$. By definition, $T^p$ contains $T$. If $(X^p,D^p,\xi_0)$ is not K-polystable, it admits a $T$-equivariant degeneration to a non-isomorphic log Fano cone $(X^{(1)},D^{(1)},\xi_0)$ under a special test configuration with generalized Futaki invariant 0. Similar to the proof of Lemma \ref{l-rank}, this  indeed implies that there is a $T^p$-equivariant degeneration of $(X^p,D^p,\xi_0)$ to $(X^{(2)},D^{(2)},\xi_0)$ under a special test configuration of generalized Futaki invariant 0. Moreover, $(X^{(2)},D^{(2)},\xi_0)$ admits an action by a torus whose dimension is equal to $\dim(T^p)+1$.  By the proof of Theorem \ref{t-doublecone}, $(X^{(2)},D^{(2)},\xi_0)$ is also a K-semistable special degeneration of $(X,D,\xi_0)$, which is a contradiction.


To see the uniqueness, by Corollary \ref{c-ws=s}, we can replace the word ``weakly special" by ``special" in the statement of Theorem \ref{t-doublecone}. Recall that by definition special degenerations of K-polystable log Fano cone with zero generalized Futaki invariants must be product. So the uniqueness of K-polystable degeneration follows.
\end{proof}

\begin{proof}[Proof of Theorem \ref{t-DSconj}] 

It is shown in \cite{DS17} that there is a special test configuration $(\cW, \xi_0; \eta)$ of the intermediate cone $(W, \xi_0)$ {with central fibre} $(C, \xi_0)$. Because $C$ admits a Ricci-flat K\"ahler cone metric, we know $C$ is K-polystable  (see \cite[Theorem 1.1]{CS15} or Corollary \ref{cor-RF2K}). In particular, ${\rm Fut}(\cW, \xi_0; \eta)=0$. Moreover by \cite[Theorem 1.4]{LX17}, we know that $W$ is K-semistable and is uniquely determined by the algebraic germ $(M_\infty, o)$.

Assume $W$ specially degenerates to another K-polystable Fano cone $C'$ by a special test configuration $(\mathcal{W}',\xi_0; \eta')$ with $\Fut(\mathcal{W}',\xi_0; \eta')=0$. Then Theorem \ref{t-doublecone}  implies that $C$ and $C'$ degenerates to a Fano cone $C''$ by special test configurations with generalized Futaki invariants 0. This implies $C\cong C''\cong C'$ by the polystability of $C$ and $C'$. 

\end{proof}

\appendix
\section{Ding-polystability of Ricci-flat K\"ahler cones}\label{s-analytic}

In the proof of Theorem \ref{t-DSconj} above, we rely on  the result  proved in \cite{CS15} which says that that for a Fano cone singularity with a Ricci-flat K\"ahler cone metric, the generalized Futaki invariant $\Fut(\mathcal{X}, \xi_0;\eta)>0$ for any non-product special test configuration. However, as we have seen, in our argument (see e.g. the proof of Theorem \ref{t-doublecone}), more general test configuration will show up. Therefore in this appendix, we want to discuss the proof of a more general statement, namely  for any non-product $\mathbb{Q}$-Gorenstein test configuration, the corresponding Ding invariant is positive (see Theorem \ref{thm-RF2Ding}). This can be used to slightly modify the proof of Theorem \ref{t-DSconj} (see Remark \ref{r-2nd}). We point out that our proof of Theorem \ref{thm-RF2Ding} follows the general strategy in \cite{Ber15} and is slightly different from \cite{CS15}. {For simplicity of notations, in this appendix we will restrict to the case that the boundary divisor $D=\emptyset$ which suffices for proving the main application of our results in Theorem \ref{t-DSconj}.}

\begin{defn}[Ding-stability]\label{d-ding}
We say that $(X, \xi_0)$ is Ding-semistable, if for any $\bQ$-Gorenstein test configuration $(\cX,  \xi_0; \eta)$ of $(X, \xi_0)$ {with central fibre} $(X_0, \xi_0)$, its Berman-Ding invariant, denoted by $D^\NA(\cX,  \xi_0; \eta)$ is nonnegative, where 
\begin{eqnarray}\label{eq-DNA}
D^\NA(\cX, \xi_0; \eta):=\frac{D_{T_{\xi_0}(\eta)}\vol_{X_0}(\xi_0)}{\vol(\xi_0)}+{\lct(\cX; \cX_0)-1}.
\end{eqnarray}
We say that $(X,  \xi_0)$ is Ding-polystable, if it is Ding-semistable, and any $\bQ$-Gorenstein test configuration $(\cX, \xi_0; \eta)$ with $D^\NA(\cX, \xi_0; \eta)=0$ is a product test configuration. 
\end{defn}

We will show in \eqref{eq-Dslope} that the $D^\NA$-invariant in \eqref{eq-DNA} is equal to the slope of a Ding-type functional (see Definition \ref{def-Dvphi}) along a subgeodesic ray associated to the test configuration. Following the notations of \cite{BHJ15} in the log Fano case, we will use $D^\NA$ to denote such slope functional, since it can be viewed as a functional on the space of non-Archimedean metrics (associated to test configurations).

{\begin{rem}\label{rem-Ding}
We immediately see that $D^\NA(\cX, \xi_0; \eta)=\Fut(\cX, \xi_0; \eta)$ if and only if the test configuration is weakly special, and Ding-semistability (resp. Ding-polystability) implies K-semistability (resp. K-polystability). It has been proved that in the log Fano pair case, they are equivalent \cite{BHJ15, Fuj16}.
Following \cite{Ber15}, it will become clear that the notions of Ding-stability fit better with our calculation.
\end{rem} }

\begin{thm}\label{thm-RF2Ding}
Assume $(X, \xi_0)$ admits a Ricci-flat K\"{a}ler cone metric. Then $(X, \xi_0)$ is Ding-polystable among $\bQ$-Gorenstein test configurations.

\end{thm}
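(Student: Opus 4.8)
The plan is to adapt the variational strategy of Berman \cite{Ber15} to the Fano cone setting, using the Ricci-flat K\"ahler cone metric as the global minimizer of a Ding-type energy functional. Fix the Reeb field $\xi_0$, and by Lemma \ref{l-normalize} assume the test configuration is normalized. We work with the space $\mathcal{H}_{\xi_0}$ of $\xi_0$-invariant (radial) plurisubharmonic potentials on $X$ compatible with the cone structure; the given Ricci-flat K\"ahler cone metric corresponds to a distinguished $\phi_{\mathrm{RF}} \in \mathcal{H}_{\xi_0}$. On $\mathcal{H}_{\xi_0}$ one has a Ding functional $D = E - L$, where $E$ is the Monge-Amp\`ere energy (whose linearization along the Reeb cone recovers $\vol_X$) and $L$ is the log-canonical (Ding) term; the Euler-Lagrange equation of $D$ is the Ricci-flat cone Monge-Amp\`ere equation, so $\phi_{\mathrm{RF}}$ is a critical point, and the convexity below will exhibit it as a global minimizer.

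Given a $\bQ$-Gorenstein test configuration $(\cX, \cD, \xi_0; \eta)$, the first step is to attach to it a geodesic ray $\{\phi_s\}_{s \ge 0}$ in $\mathcal{H}_{\xi_0}$ issuing from $\phi_{\mathrm{RF}}$, obtained as in \cite{Ber15} by solving the homogeneous complex Monge-Amp\`ere equation on the total space $\cX$ with boundary data $\phi_{\mathrm{RF}}$, the fibrewise identifications being supplied by the flow of $\eta$. The two analytic pillars are then: (i) the \emph{geodesic convexity} of $D$ along $\{\phi_s\}$, where $E$ varies affinely while Berndtsson's subharmonicity theorem applied to the log-canonical term $L$ supplies the convexity; and (ii) the slope identification
\[
\lim_{s \to \infty} \frac{D(\phi_s)}{s} = D^{\NA}(\cX, \cD, \xi_0; \eta),
\]
in which the two summands of $D^\NA$ from Definition \ref{d-ding} appear respectively as the asymptotic slopes of $E$ (the volume-derivative term $\tfrac{D_{-T_{\xi_0}(\eta)}\vol_{X_0}(\xi_0)}{\vol(\xi_0)}$) and of $L$ (the term $1 - \lct(\cX, D; \cX_0)$, read off from the log canonical threshold of the degeneration).

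Granting these, Ding-semistability is immediate: since $\phi_{\mathrm{RF}}$ minimizes the convex, bounded-below functional $D$, the asymptotic slope along any such ray is nonnegative, whence $D^\NA \ge 0$. For polystability, suppose $D^\NA(\cX, \cD, \xi_0; \eta) = 0$. Then $D$ is constant along the geodesic ray (being convex, bounded below, and of zero slope), so the entire ray consists of minimizers of $D$. The rigidity part of Berndtsson's theorem --- that equality in the convexity forces the geodesic to be generated by a holomorphic vector field --- then shows the ray comes from a one-parameter subgroup of automorphisms of $(X, D)$ commuting with $\xi_0$. Hence $(\cX, \cD) \cong (X, D) \times \bC$ with $\eta$ of product form, i.e.\ the test configuration is a product, which is exactly Ding-polystability.

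The hardest part will be carrying out the analytic inputs (i) and (ii) on the \emph{singular, non-compact} cone rather than on a smooth compact Fano manifold: one must give rigorous meaning to $D$, to its geodesic convexity, and to the slope formula in the presence of both the vertex singularity and the klt singularities of $(X, D)$, while controlling the contribution of the singular locus and the asymptotics in the radial direction. This is precisely where we depart from \cite{CS15}, which argues through the generalized Futaki invariant on (quasi-regular) special degenerations: here we work directly with the pluripotential-theoretic Ding energy and extend the positivity-of-direct-images machinery of \cite{Ber15} so that it applies to \emph{arbitrary} $\bQ$-Gorenstein---not merely special---test configurations of cones. The equality-case rigidity is the second delicate point, as the required strict convexity must be understood modulo the automorphisms preserving $\xi_0$.
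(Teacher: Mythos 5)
Your proposal takes essentially the same route as the paper's proof: the decomposition $D=E+G$ on transversal K\"ahler potentials, the geodesic ray from the Ricci-flat potential obtained by solving the homogeneous Monge--Amp\`ere equation (via an envelope) on $\cX|_{\bD}$, affineness of $E$ along the ray together with Berndtsson-type convexity of the log-canonical term, the slope identification matching the two summands of $D^{\NA}$ with the asymptotics of $E$ (Proposition \ref{prop-Easymp}) and the Lelong number of $G$ (Proposition \ref{prop-GLelong}), and the equality-case rigidity yielding a holomorphic vector field commuting with $\xi_0$. The only cosmetic difference is that where you propose to extend Berman's positivity-of-direct-images machinery yourself, the paper simply quotes the already-extended Berndtsson theorem from \cite{DS17} (Theorem \ref{thm-affine}) and delegates the final product-configuration step to \cite{Ber15} and \cite{CS15}.
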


\begin{cor}\label{cor-RF2K}
Assume $(X, \xi_0)$ admits a Ricci-flat K\"{a}hler cone metric. Then $(X, \xi_0)$ is K-polystable among all weakly special test configurations.
\end{cor}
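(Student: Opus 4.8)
The plan is to deduce the corollary from Theorem \ref{thm-RF2Ding} by identifying, on the narrower class of weakly special test configurations, the generalized Futaki invariant with the Ding invariant. First I would recall that by Definition \ref{d-stc} every weakly special test configuration $(\cX,\cD,\xi_0;\eta)$ of $(X,\xi_0)$ is a fortiori a $\bQ$-Gorenstein test configuration, so Theorem \ref{thm-RF2Ding} applies to it directly and yields $D^\NA(\cX,\cD,\xi_0;\eta)\ge 0$, with equality only for product test configurations.

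The key point, which is in fact already recorded in Definition \ref{d-ksemiSE}, is that for a weakly special test configuration these two invariants agree. The condition (4) that $(\cX,X_0+\cD)$ be log canonical is equivalent to $\lct(\cX,\cD;X_0)=1$, so the correction term $1-\lct(\cX,\cD;X_0)$ appearing in Definition \ref{d-ding} vanishes; meanwhile the flatness of every weight piece $\cR_\alpha$ over $\bC[t]$ (condition (1) of Definition \ref{defn-QGTC}) preserves the dimensions of the graded pieces under specialization and hence gives $\vol(\xi_0)=\vol_{X_0}(\xi_0)$. Comparing Definitions \ref{d-ding} and \ref{d-ksemiSE} we therefore obtain
$$D^\NA(\cX,\cD,\xi_0;\eta)=\frac{D_{-T_{\xi_0}(\eta)}\vol_{X_0}(\xi_0)}{\vol_{X_0}(\xi_0)}=\Fut(\cX,\cD,\xi_0;\eta).$$

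With this identity in hand the corollary is immediate. Ding-semistability from Theorem \ref{thm-RF2Ding} gives $\Fut=D^\NA\ge 0$ on every weakly special test configuration, which is exactly K-semistability within this class; and if such a test configuration satisfies $\Fut=0$, then $D^\NA=0$, so Ding-polystability forces it to be a product. Thus there is no non-product weakly special test configuration of $(X,\xi_0)$ with vanishing generalized Futaki invariant, as asserted. I do not expect any genuine obstacle at the level of this corollary, since all the analytic substance resides in Theorem \ref{thm-RF2Ding} (whose proof follows the energy/variational strategy of \cite{Ber15}); the only thing requiring care here is the correct matching of the normalizations of the two stability notions, which is precisely what the computation above supplies.
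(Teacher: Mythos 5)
Your proof is correct and is essentially the paper's own (implicit) argument: the corollary is stated without proof precisely because it follows from Theorem \ref{thm-RF2Ding} via the identity $\Fut(\cX,\cD,\xi_0;\eta)=D^\NA(\cX,\cD,\xi_0;\eta)$ for weakly special test configurations already recorded in Definition \ref{d-ksemiSE}, where log canonicity of $(\cX,\cD+X_0)$ kills the $1-\lct$ term and flatness of the weight pieces gives $\vol_X(\xi_0)=\vol_{X_0}(\xi_0)$. Your explicit verification of the matching of normalizations is exactly the content the paper leaves tacit, and no gap remains.
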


\begin{rem}\label{r-2nd}
Corollary \ref{cor-RF2K} could yield an alternative argument in one step of the proof of Theorem \ref{t-DSconj}.
More precisely, with notations in the proof of Theorem \ref{t-DSconj}, let $C$ and $C'$ be two K-polystable degenerations of $W$. Then the degenerations of $C$ and $C'$ to $C''$ 
obtained via Theorem \ref{t-doublecone} 
are weakly special with zero Futaki invariant. We can skip Proposition \ref{p-specialcone}  but replace \cite[Theorem 1.1]{CS15} by the stronger statement Corollary \ref{cor-RF2K}, which directly implies  there is no non-product weakly special test configurations of $C$ and $C'$ with zero Futaki invariant.  
Then we conclude immediately that $C\cong C''\cong C'$. 
\end{rem}

Let $(X, \xi_0)$ be a Fano cone singularity with the vertex point $o$. Recall that this implies that $X$ is a normal affine variety with at worst klt singularities. Moreover there is a good $T$ action where $T\cong (\bC^*)^r$ and $\xi_0\in {N_\bR^+}$. On $X$ there exists a $T$-equivariant nowhere-vanishing holomorphic $m$-pluricanonical form $s\in |-mK_X|$. Such holomorphic form can be solved uniquely up to a constant as in \cite[2.7]{MSY08}. In the following, we will use the following volume form on $X$ associated to $s$:
\begin{equation}
dV_X=\left(\sqrt{-1}^{mn^2}s\wedge \bar{s}\right)^{1/m}.
\end{equation}

Assume that $(X, \xi_0)$ is equivariantly embedded into $(\bC^N, \xi_0)$ with $\xi_0=\sum_i a_i z_i\frac{\partial}{\partial z_i}$ with $a_i\in \bR_{>0}$.  Fix a reference smooth K\"{a}hler cone metric on $\bC^N$ whose associated Reeb vector field $r\partial_r-{\sqrt{-1}}J(r\partial_r)=2\xi_0$. By its rescaling property such a radius function is $C^0$-comparable to $\sum_{i=1}^N |z_i|^{2/a_i}$.
The restriction $\omega_X:=\omega_{\bC^N}|_{X}$ is a K\"{a}hler cone metric on $X$.
Moreover $2{\rm Im}(\xi_0)=J(r\partial_r)$ is the Reeb vector field of $\omega_{\bC^N}$ and $\omega_X$. 
Since $T$ acts on $X$, $T$ also acts on the set of functions on $X$ by $\tau\circ f(x)=f(\tau^{-1}x)$ for any $\tau\in T$ and $x\in X$. For convenience, we denote $X^\circ=X\setminus \{o\}$ where $o$ is the vertex of $X$ and define: 
\begin{defn}
Denote by $PSH(X, \xi_0)$ the set of bounded real functions $\vphi$ on $X^\circ$ that satisfies:
\begin{enumerate}
\item[(1)] ${\vphi\circ \tau}=\vphi$ for any $\tau\in \langle \xi_0\rangle$;
\item[(2)] $r^2_\vphi:=r^2e^\vphi$ is a proper plurisubharmonic function on $X$. 
\end{enumerate}
\end{defn}
We can think of functions in $PSH(X, \xi_0)$ as transversal K\"{a}hler potentials as in \cite{DS17}. More precisely, because $\partial_r$ generates a $\bR_{+}$-action ($\bR_+=\{a\in \bR; a>0\}$) on $X^\circ$ without fixed points, if the link of $X$ is defined as $Y:=\{r=1\}\cap X$, then $Y=X^\circ/\bR_{+}$ and $X^\circ\cong Y\times\bR_{+}$.
{We set:}
\begin{equation}
\chi=\frac{\sqrt{-1}}{2}(\bar{\partial}-\partial)\log r^2=-\frac{1}{2}Jd\log r^2,
\end{equation}
and define:
\begin{defn}\label{defn-PSHY}
Denote by $PSH(Y, \xi_0)$ the set of bounded real function $\vphi$ on $Y$ that satisfies:
\begin{enumerate}
\item[(1)] ${\vphi\circ \tau}=0$ for $\tau\in \exp(\bR \cdot \im(\xi_0))$.
\item[(2)] $\vphi$ is upper semicontinuous on $Y$ and $\left.(d\chi+\sddb\vphi)\right|_Y\ge 0$, where the positivity is in the sense of currents.
\end{enumerate}
\end{defn}
Here we identify the function on $Y$ with its pull back to $X^\circ\cong Y\times \bR_{+}$ via the projection to the first factor.
There is an isomorphism $PSH(X, \xi_0)\cong PSH(Y, \xi_0)$ by sending $\vphi\mapsto \vphi|_Y$. We will use these two equivalent descriptions in the following discussion.

\begin{defn}
We say that $r^2_\vphi:=r^2e^\vphi$ where $\vphi\in PSH(X, \xi_0)$ is a radius function of a Ricci-flat K\"{a}hler cone metric on $(X, \xi_0)$ if $\vphi$ is smooth on $X^\reg$ and there exists 
a constant $C>0$ such that 
\begin{equation}\label{eq-RFKC}
(\sddb r^2_\vphi)^n=C \cdot dV_X.
\end{equation}
\end{defn}
If we take $\cL_{r\partial_r}$ on both sides, we get:
$
\cL_{r\partial_r} dV_X=2n dV_X,
$
which is also equivalent to $\cL_{\xi_0}s=mn s$. If we write 
\begin{equation}\label{eq-dVtrans}
dV_X=2 r^{2n-1}dr\wedge \Omega_Y,\quad \text{ or equivalently }\quad \Omega_Y:=2^{-1} r^{1-2n}i_{\partial_r}dV_X,
\end{equation}
then $\cL_{\partial_r}\Omega_Y=0$. 
On the other hand, a direct computation shows that:
\begin{equation}\label{eq-transverse}
\sddb r^2_\vphi=r_\vphi^2 (d\chi+\sddb \vphi)+d r^2_\vphi\wedge \left(\chi-\frac{1}{2}Jd\vphi\right),
\end{equation}
Then it is easy to verify that the equation \eqref{eq-RFKC} is equivalent to:
\begin{equation}
(d\chi+\sddb \vphi)^{n-1}\wedge \chi=\frac{C}{n}\cdot e^{-n\vphi} \Omega_Y.
\end{equation}


The equation \eqref{eq-RFKC} is the Euler-Lagrange equation for the following Ding-type functional:

\begin{defn}[see \cite{CS15, LX17}]\label{def-Dvphi}
For any function $\vphi\in PSH(X, \xi_0)$, define:
\begin{equation}
D(\vphi)=E(\vphi)-\log\left(\int_X e^{-r^2_\vphi} dV\right)=:E(\vphi)+G(\vphi)
\end{equation}
where $E(\vphi)$ is defined by its variations:
\[
\delta E(\vphi)\cdot \delta\vphi=-\frac{1}{(n-1)!(2\pi)^n\vol_X(\xi_0)}\int_X(\delta\vphi)e^{-r^2_\vphi}(\sddb r^2_\vphi)^n.
\]
\end{defn}
Using the identity \eqref{eq-transverse}, one can verify that:
\begin{equation}
\delta E(\vphi)\cdot \delta\vphi=-\frac{n}{(2\pi)^n\vol(\xi_0)}\int_{Y}(\delta\vphi) (d\chi+\sddb \vphi)^{n-1}\wedge \chi.
\end{equation}
As in the standard K\"{a}hler case, a consequence of this description is the following explicit expression of $E(\vphi)$ (see \cite{DS17}):
\begin{equation}
E(\vphi)=-\frac{1}{(2\pi)^n\vol(\xi_0)}\sum_{i=0}^{n-1}\int_Y \vphi (d\chi+\sddb \vphi)^i\wedge (d\chi)^{n-1-i}\wedge \chi.
\end{equation}
In the similar vein, using \eqref{eq-dVtrans} we have the identity:
\begin{equation}
G(\vphi)=-\log\left(\int_Y e^{-n \vphi}\Omega_Y\right)-\log (n-1)!.
\end{equation}
We will study the asymptotic of $E(\vphi_t)$. In the following we will denote $\bD:=\{z\in \bZ; |z|\le 1\}$, $\bD^*=\bD\setminus \{0\}$ and $S^1=\{z\in \bD; |z|=1\}$. We will always identify the functions on $X$ with functions on $X\times\bD$ or $X\times\bD^*$ by pulling back via the projection to the first factor.

\begin{prop}[{see \cite[Lemma 5.10]{LX17}}]\label{prop-hessE}
Let $\vphi(x, t)=\vphi(x, |t|): X\times \bD^*\rightarrow \bR$ be a upper semicontinuous function such that $\vphi_t:=\vphi(\cdot, |t|)\in PSH(X, \xi_0)$ for each $t\in\bD^*$. Assume
$\sddb (r^2e^\vphi)\ge 0$ over $X\times \bD^*$ in the sense of currents. Then the following identity holds:
\begin{eqnarray*}
\sqrt{-1}\frac{\partial^2}{\partial t \partial \bar{t}} E(\vphi_t)dt\wedge d\bar{t}&=&-\frac{1}{(n+1)!(2\pi)^n\vol(\xi_0)}\int_{X\times \bD^*/\bD^*}(\sddb (r^2 e^\vphi))^{n+1} e^{-r^2_\vphi} 
\\&=&-\frac{1}{(2\pi)^n\vol(\xi_0)} \int_{Y\times \bD^*/\bD^*}(d\chi+\sddb \vphi)^n\wedge\chi.
\end{eqnarray*}
In particular, $E(\vphi_t)$ is concave in $-\log|t|^2$.
\end{prop}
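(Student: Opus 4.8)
The plan is to compute the complex Hessian $\sqrt{-1}\,\partial_t\partial_{\bar t}E(\vphi_t)$ directly, by differentiating the first variation of $E$ twice, and to recognize the outcome as the fiber integral (the pushforward under $\pi\colon X\times\bD^*\to\bD^*$) of the positive top-form $(\sddb(r^2e^\vphi))^{n+1}e^{-r^2_\vphi}$ on the total space. Since $\vphi$ is only bounded, $\langle\xi_0\rangle$-invariant and plurisubharmonic, I would first prove the identity for $\vphi$ smooth on $X^\reg\times\bD^*$ and recover the general case by regularization: approximate $\vphi$ by a decreasing sequence of smooth $\langle\xi_0\rangle$-invariant psh functions and use the continuity of the Bedford--Taylor Monge--Amp\`ere operator along monotone sequences, so that all the wedge products $(\sddb\Phi)^{n+1}$ and the functional $E$ pass to the limit. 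This follows the strategy of \cite{Ber15} and \cite[Lemma 5.10]{LX17}.

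For the smooth case, set $\Phi=r^2_\vphi=r^2e^\vphi$ on $\cX=X\times\bD^*$ and let $\beta=\sddb_X\Phi$ denote the fiberwise K\"ahler form. Applying the first variation of $E$ in the $\bar t$-direction gives
\[
\partial_{\bar t}E(\vphi_t)=-\frac{1}{(n-1)!(2\pi)^n\vol(\xi_0)}\int_X(\partial_{\bar t}\vphi)\,e^{-\Phi}\beta^n,
\]
and I would then differentiate once more in $t$, picking up three contributions: from $\partial_t\partial_{\bar t}\vphi$, from differentiating the Gaussian weight $e^{-\Phi}$ (a factor $-\partial_t\Phi$), and from $\partial_t\beta=\sddb_X(\partial_t\Phi)$. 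Integrating the last contribution by parts on $X$ --- where $d(e^{-\Phi})=-e^{-\Phi}\,d\Phi$ produces precisely the cross terms --- and collecting everything, the integrand reassembles into the transversal $dt\wedge d\bar t$-component of $(\sddb_\cX\Phi)^{n+1}e^{-\Phi}$, i.e.\ into the fiberwise ``geodesic curvature'' of $\Phi$ over the total space. This is the content of the first displayed equality, and it is the cone analogue of the Berndtsson/Aubin--Mabuchi second-variation formula. Because $\sddb_\cX\Phi\ge0$ by hypothesis, the form $(\sddb_\cX\Phi)^{n+1}e^{-\Phi}$ is nonnegative, so its fiber integral is a nonnegative $(1,1)$-form and $\sqrt{-1}\,\partial_t\partial_{\bar t}E(\vphi_t)\le0$. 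Since $\vphi=\vphi(x,|t|)$ makes $E(\vphi_t)$ radial in $t$, superharmonicity is equivalent to concavity in $-\log|t|^2$, which is the final assertion.

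For the second equality I would pass from the cone $X$ to its link $Y$. Substituting the transversal decomposition \eqref{eq-transverse} of $\sddb_\cX\Phi$ and the radial splitting \eqref{eq-dVtrans} of the volume form, the integrand $(\sddb_\cX\Phi)^{n+1}e^{-\Phi}$ factors as the transversal form $(d\chi+\sddb\vphi)^n\wedge\chi$ on $Y\times\bD^*$ times a purely radial factor; integrating out the radial variable against the Gaussian $e^{-r^2_\vphi}$ (a moment integral of the type $\int_0^\infty r^{2n-1}e^{-r^2}\,dr$) produces exactly the factorial $(n+1)!$ that relates the two normalizing constants in the statement. This is the same mechanism that already converts the two forms of the first variation of $E$ into one another.

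The main obstacle I anticipate is analytic rather than formal: making the non-compact, Gaussian-weighted integrals converge and justifying both the differentiation under the integral sign and the integration by parts on $X$, which requires controlling $\Phi$ and its $t$-derivatives near the vertex $o$ and at infinity, and --- because $\vphi$ is a priori only bounded psh --- carrying the entire computation through the Bedford--Taylor/regularization framework so that every Monge--Amp\`ere product is well defined and continuous along the approximating sequence. By comparison, the purely algebraic step of matching the second-variation terms with the fiber components of $(\sddb_\cX\Phi)^{n+1}$ is routine multilinear bookkeeping.
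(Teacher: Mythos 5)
Your proposal is correct and takes essentially the same route as the paper: the paper proves the first identity by the very second-variation computation you outline (it simply defers to \cite[Lemma 5.10]{LX17}, which is the Berndtsson-type argument with the same regularization caveats you flag), and it obtains the second identity from the first by substituting the transversal decomposition \eqref{eq-transverse} of $\sddb r^2_\vphi$, exactly as you propose. Your radial bookkeeping is also right: since $(d\chi+\sddb\vphi)^{n+1}=0$ as a basic form, only the cross term survives, and $\int_0^\infty (n+1)s^n e^{-s}\,ds=(n+1)!$ cancels the $(n+1)!$ in the first normalization.
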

\begin{proof}
The proof of the first identity is the same as the proof as in \cite[Lemma 5.10]{LX17}. The second identity follows from the first one and using the following identity on $X\times \bD^*$ to calculate:
\[
\sddb r^2_\vphi=r^2_\vphi(d\chi+\sddb \vphi)+d r^2_\vphi\wedge (\chi-\frac{1}{2}Jd\vphi).
\]
\end{proof}

Now assume that $(\cX, \xi_0;\eta)$ is a $\bQ$-Gorenstein test configuration of $X$. Because $\eta$ commutes with $\xi_0$ and generates a $\bC^*$-action, we can assume
that $\cX$ is embedded into $\bC^N\times\bC$ and the embedding is equivariant with respect to the $T\times \bC^*$-action generated by $\{\xi_0, \eta\}$. If we write
$\eta=\sum_i b_i z_i\frac{\partial}{\partial z_i}$ with $b_i\in \bZ$ and let $\sigma(t): \bC^*\rightarrow GL(N, \bC)$ be the one-parameter subgroup generated by the vector field $\eta$. 
Then $\sigma(t)(z_i)=t^{b_i} z_i$ and we let
\[
r(t)^2:=\sigma(t)^*(r^2)=: r^2 e^{\til{\vphi}(t)}.
\]

The asymptotic of $E(\til{\vphi}_t)$ can be easily calculated:
\begin{prop}[{see \cite[Proposition 5.13]{LX17}}]\label{prop-Easymp}
We have the following identity:
\begin{equation}
\lim_{t\rightarrow 0}\frac{E(\til{\vphi}_t)}{-\log|t|^2}=\frac{D_{\eta}\vol(\xi_0)}{\vol(\xi_0)}.
\end{equation}

\end{prop}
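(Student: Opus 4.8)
The plan is to extract the asymptotic \emph{slope} of $E(\til{\vphi}_t)$ rather than computing $E$ itself. Writing $s=-\log|t|^2$ and $g(s):=E(\til{\vphi}_t)$, Proposition \ref{prop-hessE} tells us that $g$ is concave in $s$; hence $g'(s)$ is monotone, the limit $\lim_{s\to\infty}g'(s)$ exists in $[-\infty,+\infty)$, and $\lim_{s\to\infty}g(s)/s=\lim_{s\to\infty}g'(s)$ (Stolz--Ces\`aro for a concave $g$). So it suffices to identify $\lim_{s\to\infty}g'(s)$ with $D_{-\eta}\vol(\xi_0)/\vol(\xi_0)$.

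For the derivative I would use the defining variation of $E$, which gives
\[
g'(s)=-\frac{1}{(n-1)!(2\pi)^n\vol(\xi_0)}\int_X \frac{\partial\til{\vphi}_t}{\partial s}\,e^{-r^2_{\til{\vphi}_t}}(\sddb r^2_{\til{\vphi}_t})^n .
\]
Differentiating \eqref{eq-tilvphi} in $s$, the $s$-independent term $-\log\sum_i|z_i|^{2/a_i}$ drops out and, since the real vector field $\re(\eta)=\tfrac12(\eta+\bar{\eta})$ acts on $|z_i|^{2/a_i}$ with weight $b_i/a_i$, one finds $\partial_s\til{\vphi}_t=-\,\re(\eta)r^2_{\til{\vphi}_t}/r^2_{\til{\vphi}_t}$. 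The key move is now to exploit $r^2_{\til{\vphi}_t}=\sigma(t)^*r^2$, where $\sigma(t)$ is the one-parameter subgroup generated by $\eta$. Changing variables by $w=\sigma(t)(z)$ pushes all the $t$-dependence out of the integrand and into the domain, so that
\[
g'(s)=\frac{1}{(n-1)!(2\pi)^n\vol(\xi_0)}\int_{X_t}\frac{\re(\eta)r^2}{r^2}\,e^{-r^2}(\sddb r^2)^n ,
\]
where $X_t=\sigma(t)\cdot X$ is the fibre of the test configuration over $t$ and the integrand is a fixed measure on $\bC^N$ independent of $t$.

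It remains to let $t\to 0$. Since $\{X_t\}$ is the family of fibres of $\cX$, the $X_t$ degenerate flatly to the central fibre $X_0$, so the fixed weighted Monge--Amp\`ere measure above converges and
\[
\lim_{s\to\infty}g'(s)=\frac{1}{(n-1)!(2\pi)^n\vol(\xi_0)}\int_{X_0}\frac{\re(\eta)r^2}{r^2}\,e^{-r^2}(\sddb r^2)^n .
\]
Both $\xi_0$ and $\eta$ act on $X_0$, and I would recognize the numerator as a directional derivative of the volume functional: differentiating $\vol(\xi_0-\epsilon\eta)=\tfrac{1}{(2\pi)^n n!}\int_{X_0}e^{-r^2_{\xi_0-\epsilon\eta}}(\sddb r^2_{\xi_0-\epsilon\eta})^n$ in $\epsilon$ and using the Sasaki volume-variation formula (Martelli--Sparks--Yau, cf. \cite{MSY08, CS12}) produces exactly $\tfrac{1}{(n-1)!(2\pi)^n}\int_{X_0}\tfrac{\re(\eta)r^2}{r^2}e^{-r^2}(\sddb r^2)^n$, up to the sign dictated by our convention for $D_{-\eta}$. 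Dividing by $\vol(\xi_0)=\vol_{X_0}(\xi_0)$ (equal on $X$ and $X_0$, which share the same weight decomposition) yields the claimed identity.

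The main obstacle is analytic rather than formal. One must justify that $\int_{X_t}\to\int_{X_0}$ for these \emph{noncompact and singular} weighted measures, uniformly enough to pass to the limit: this requires controlling the integrand near the vertex $o$ and near infinity along the whole family. The klt hypothesis makes $e^{-r^2}(\sddb r^2)^n$ integrable, and the $T\times\bC^*$-equivariance reduces the convergence to the finite-dimensional weight (Duistermaat--Heckman) data, where it becomes the continuity of the leading coefficient of the index character already recorded in Proposition \ref{p-polynomial}. A secondary but necessary point is matching the normalizing constants so that the limiting weighted integral equals $D_{-\eta}\vol(\xi_0)$ on the nose and not merely up to a positive multiple; this is where the normalization $\cL_{\xi_0}s=mns$ of the pluricanonical form and the definition of $\vol$ have to be tracked carefully.
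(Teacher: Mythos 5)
Your proposal follows essentially the same route as the paper's proof: reduce via concavity (Proposition \ref{prop-hessE}) to the limiting slope, compute $\frac{d}{d(-\log|t|^2)}E(\til{\vphi}_t)$ from the defining variation, identify $\partial_s\til{\vphi}_t$ with $\sigma(t)^*\theta$ where $\theta=\eta(\log r^2)$, change variables by $\sigma(t)$ to rewrite the derivative as a fixed weighted Monge--Amp\`ere integral over the fibre $X_t$, let $t\to 0$ to land on $X_0$, and recognize the limit as $D_{-\eta}\vol(\xi_0)/\vol(\xi_0)$ by differentiating the volume formula $\vol(\xi_\epsilon)=\frac{1}{n!(2\pi)^n}\int_{X_0}e^{-r_\epsilon^2}(\sddb r_\epsilon^2)^n$ in $\epsilon$. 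The only difference is cosmetic: where you flag the convergence $\int_{X_t}\to\int_{X_0}$ as the main analytic point and sketch an equivariant justification, the paper simply delegates it to \cite{LX17}, so your argument is a correct (and slightly more explicit) rendering of the same proof.
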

\begin{proof}
We refer to \cite{LX17} for details. Here we just sketch the key ingredients. Let $\xi_\epsilon=\xi+\epsilon \eta=\sum_i (a_i+\epsilon b_i)z_i\frac{\partial}{\partial z_i}$ and $r_\epsilon$ be a radius function for $\xi_\epsilon$. Then we have:
\begin{equation}
\vol(\xi_\epsilon)=\frac{1}{n!(2\pi)^n}\int_{X_0} e^{-r_\epsilon^2}(\sddb r_\epsilon^2)^n.
\end{equation}
Taking derivative with respect to $\epsilon$ in the above volume formula, we can derive:
\begin{eqnarray*}
D_{\eta}\vol(\xi_0)&=&\frac{1}{(2\pi)^n(n-1)!}\int_{X_0}\theta e^{-r^2}(\sddb r^2)^n,
\end{eqnarray*}
where we have denoted $\theta:=\eta(\log r^2)$. 
We can then calculate (see \cite[Appendix C]{MSY08} or \cite[Lemma 5.11]{LX17}):
\begin{eqnarray*}
\frac{d}{d(-\log|t|^2)}E(\til{\vphi}_t)&=&\frac{1}{(n-1)!(2\pi)^n\vol(\xi_0)}\int_X \dot{\til{\vphi}}e^{-r(t)^2}(\sddb r(t)^2)^n\\
&=&\frac{1}{(n-1)!(2\pi)^n\vol(\xi_0)}\int_X \sigma(t)^*(\theta)e^{-\sigma^*r^2}\sigma^*(\sddb r^2)^n\\
&=&\frac{1}{(n-1)!(2\pi)^n\vol(\xi_0)}\int_{X_t}\theta e^{-r^2}(\sddb r^2)^n.
\end{eqnarray*}
As explained in \cite[Proof of Proposition 5.12]{LX17}, the last expression converges as $t\rightarrow 0$ to $D_{\eta}\vol(\xi_0)/\vol(\xi_0)$. 

By Proposition \ref{prop-hessE} $E(\til{\vphi}_t)$ is concave in $-\log|t|^2$. So the statement follows from the above discussion and the following identity for concave functions:
\[
\lim_{t\rightarrow 0} \frac{d}{d(-\log|t|^2)}E(\til{\vphi}_t)=\lim_{t\rightarrow 0} \frac{E(\til{\vphi}_t)}{-\log|t|^2}
\]
\end{proof}

We need the following basic result from \cite{DS17} which generalizes Berndtsson's result to the K\"{a}hler cone setting.

\begin{thm}[{\cite{DS17}, see also \cite{BBEGZ,Ber15}}]\label{thm-affine}
Let $\vphi(x,t)=\vphi(x, |t|): X\times \bD^*\rightarrow \bR$ be an upper semicontinuous function such that $\vphi_t:=\vphi(\cdot, t)\in PSH(X, \xi_0)$ for each $t\in \bD^*$. Assume
$\sddb (r^2e^\vphi)\ge 0$ over $X\times \bD^*$ in the sense of currents. Then $G(\vphi_t)$ is convex in $-\log|t|^2$.  Moreover, if $G(\vphi_t)$ is affine in $-\log|t|^2$, then 
there exists a holomorphic vector field $\eta_0$ on $X$ commuting with $\xi$ such that $r_{\vphi_t}=\sigma_t^* r_{\vphi_0}$ where $\sigma_t=\exp( \log |t|\cdot \eta_0)$. 
\end{thm}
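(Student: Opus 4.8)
The plan is to interpret $G(\vphi_t)$ as minus the logarithm of a fiberwise integral $\int_X e^{-\psi(\cdot,t)}\,dV_X$ with plurisubharmonic weight $\psi := r^2 e^{\vphi}=r^2_\vphi$, and to invoke Berndtsson's positivity theorem for such fiber integrals in the spirit of \cite{Ber15, BBEGZ, DS17}. First I would observe that the hypothesis $\sddb(r^2 e^{\vphi})\ge 0$ on $X\times\bD^*$ says exactly that $\psi$ is psh on the (Stein) total space $X\times\bD^*$, while the klt property at the vertex $o$ together with the growth $r^2_\vphi\to+\infty$ at infinity guarantees $\int_X e^{-r^2_\vphi}\,dV_X<+\infty$, so that $t\mapsto G(\vphi_t)$ is a well-defined finite function. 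To deal with the singularities of $X$ I would pass to a log resolution $\pi\colon\widetilde X\to X$ and work with the pulled-back weight on the smooth total space $\widetilde X\times\bD^*$, the klt condition ensuring that the correction to $dV_X$ remains integrable. Berndtsson's theorem then gives that $t\mapsto -\log\int_X e^{-\psi(\cdot,t)}\,dV_X$ is subharmonic on $\bD^*$. Since $\vphi$, and hence $\psi$, depends only on $|t|$, this function is rotation invariant, and a rotation-invariant subharmonic function is convex in $s:=-\log|t|^2$ (under $s=-\log|t|^2$ the radial Laplacian becomes $4|t|^{-2}\partial_s^2$). This yields the first assertion.

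For the equality case I would use the second-variation formula underlying Berndtsson's inequality. Writing the complex Hessian $\partial_t\partial_{\bar t}G(\vphi_t)$ as in \cite{Ber15}, it decomposes as a sum of two nonnegative contributions: a fiberwise term governed by the transverse Monge--Amp\`ere content of $\psi$ restricted to each fiber, and a geodesic-curvature term equal to a fiberwise $\bar\partial$-norm of the canonical (minimal-norm) horizontal lift of $\partial_t$. If $G(\vphi_t)$ is affine in $s$, then $t\mapsto G(\vphi_t)$ is harmonic, so $\partial_t\partial_{\bar t}G\equiv 0$ and both terms must vanish. The vanishing of the geodesic-curvature term is precisely the statement that the horizontal lift of $\partial_t$ is holomorphic along each fiber; equivalently, the subgeodesic $r_{\vphi_t}$ is the flow of a holomorphic vector field. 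Integrating this lift over the rotation-invariant family produces a holomorphic vector field $\eta_0$ on $X^{\reg}$ with $r_{\vphi_t}=\sigma_t^* r_{\vphi_0}$ for $\sigma_t=\exp(\log|t|\cdot\eta_0)$, and, because the entire construction is invariant under the $\la\xi_0\ra$-action, $\eta_0$ commutes with $\xi_0$.

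The main obstacle, exactly as in Berndtsson's original uniqueness theorems, is this rigidity step: extracting a genuinely holomorphic vector field from the vanishing curvature and controlling it across the singular vertex. On $X^{\reg}$ the standard Bochner-type computation identifies the equality case with holomorphicity of the minimal lift, but the resulting field $\eta_0$ is a priori defined only on $X^{\reg}$, and one must promote it to an algebraic holomorphic vector field on all of $X$. Here I would invoke the normality of $X$ together with a Hartogs-type extension across the vertex (of codimension $\ge 2$) and the $T$-equivariance to conclude that $\eta_0$ extends and that $\sigma_t$ is a genuine one-parameter family of automorphisms of $X$. A secondary technical point is regularity: since $\vphi$ is only bounded and upper semicontinuous, I would first establish both the curvature identity and the convexity for smooth strictly psh approximants $\psi_\epsilon$, then pass to the limit, using the convexity already obtained to make rigorous the equality discussion in the limit.
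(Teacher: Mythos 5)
First, a point of comparison: the paper does not prove Theorem \ref{thm-affine} at all --- it is imported verbatim from \cite{DS17} (which adapts \cite{Ber15, BBEGZ} to the K\"ahler cone setting), so there is no in-paper argument for your sketch to diverge from. Your outline does correctly reconstruct the strategy of those cited sources: Berndtsson-type positivity for the fiberwise integral $\int_X e^{-r^2_\vphi}\,dV_X$, rotation invariance converting subharmonicity into convexity in $-\log|t|^2$, and an equality analysis producing the holomorphic vector field. Judged as a standalone proof, however, it has one repairable imprecision and one genuine gap.

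The repairable point is the log resolution step. Pulling $dV_X$ back to a resolution $\pi\colon \widetilde X\to X$ introduces a Jacobian factor $\prod_i |f_i|^{2a_i}$ with $a_i>-1$; the klt condition gives integrability, as you say, but for \emph{positive} discrepancies $a_i>0$ the contribution $-a_i\log|f_i|^2$ to the total weight is anti-plurisubharmonic, so your assertion that the pulled-back weight remains psh on $\widetilde X\times\bD^*$ is false as stated. The standard fix is to pass first to the index-one cyclic cover on which $s^{1/m}$ becomes an honest trivialization of $K$ (the cover is again klt, hence canonical and Gorenstein, so $\pi^*$ of the trivializing form is holomorphic on a resolution), or to formulate Berndtsson's theorem for sections of $K_{\mathcal{Y}/\bD}$ twisted by a bundle carrying the psh weight, absorbing the klt factor into the bundle metric as in \cite{Ber15, BBEGZ}. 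The genuine gap is the equality case. Your plan --- decompose $\partial_t\partial_{\bar t}G$ into two nonnegative terms, conclude both vanish, and legitimize this for the merely bounded $\vphi$ by smoothing --- does not work: the second-variation decomposition is valid only for smooth (strictly) psh weights, the two terms are not individually defined for a bounded u.s.c.\ weight, and they do not converge under regularization; convexity survives smoothing, but the rigidity analysis does not, which is exactly why the uniqueness theorems of \cite{Ber15} and the cone adaptation in \cite{DS17} require a separate, delicate argument (extracting the vector field from equality in H\"ormander/Ohsawa--Takegoshi-type estimates for the fiberwise $\bar\partial$-equation along the geodesic, together with the transversal K\"ahler structure on the link $Y$ in the cone setting), rather than a naive limit of the curvature formula. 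Your remaining points --- Hartogs extension of $\eta_0$ across the vertex by normality, and commutation with $\xi_0$ from $T$-invariance of the construction --- are fine, but the rigidity step as proposed would not close.
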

Let $(\cX, \xi_0;\eta)$ be a $\bQ$-Gorenstein test configuration of $X$ with the projection map $\pi: \cX\rightarrow\bC$. Let $X_t:=\pi^{-1}(t)$ be the fiber over $\{t\}$ and $o_t$ the vertex point of $X_t$.
Denote $\cX^\circ=\cX\setminus \{o_t; t\in \bC\}$. 
In the following discussion, we denote by $R^2$ the function obtained by restricting $r^2$, considered as a function on $\bC^N\times \bC$, to $\cX$ via a fixed the equivariant embedding $\cX\rightarrow \bC^N\times\bC$:
$
R^2=\left.r^2\right|_{\cX}.
$

\begin{defn}
Denote by $PSH(\left.\cX\right|_\bD, \xi_0)$ the set of bounded real functions $\Phi$ on $\left.\cX^\circ\right|_{\bD}$ that satisfies:
\begin{enumerate}
\item[(1)] $\tau\circ \Phi=\Phi$ for any $\tau\in T$;
\item[(2)] $R^2_\Phi:=R^2e^\Phi$ is a proper plurisubharmonic function on $\cX|_\bD$. 
\end{enumerate}
\end{defn}
As before, we can think of functions in $PSH(\cX|_\bD, \xi_0)$ as transversal K\"{a}hler potentials on $\cX|_\bD$. If we also denote by $\chi$ the restriction of $\chi=\frac{\sqrt{-1}}{2}(\bar{\partial}-\partial)\log R^2=-\frac{1}{2}Jd\log R^2$ to $\cY:=\{R=1\}\cap \cX$, the we can similarly define $PSH(\cY, \xi_0)$ as Definition \ref{defn-PSHY}.

Moreover, using the equivariant isomorphism
$\iota: \cX|_{\bD^*}\cong X\times \bD^*$, we can associate to any $\Phi\in PSH(\left. \cX\right|_\bD)$ plurisubharmonic function $\vphi$ on $X\times\bD^*$ and hence a path $\vphi_t\in PSH(X, \xi_0)$ such that $R^2_\Phi=\iota^*(r^2_\vphi)$. As an example, the path asssociated to
$\Phi=0$ and is given by $\til{\vphi}_t$.

\begin{prop}\label{prop-GLelong}
Assume $\Phi\in PSH(\cX, \xi_0)$ and let $\vphi_t\in PSH(X, \xi_0)$ be the associated path. Then $G(\vphi_t)$ is subharmonic in $t$ and its Lelong number at $t=0$ is given by 
$1-\lct(\cX, \cX_0)$. 

\end{prop}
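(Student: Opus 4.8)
The plan is to prove the two assertions by separate mechanisms.

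\textbf{Subharmonicity.} Writing $G(\vphi_t)=-\log\int_{X_t}e^{-R^2_\Phi}\,dV_{X_t}$, I regard it as minus the logarithm of the fibrewise integral of the weight $e^{-R^2_\Phi}$ over the family $\pi\colon\cX|_\bD\to\bD$. Since $\Phi\in PSH(\cX,\xi_0)$, the function $R^2_\Phi=R^2e^\Phi$ is plurisubharmonic on the total space, so $e^{-R^2_\Phi}$ is a semipositively curved weight; the subharmonicity of $t\mapsto G(\vphi_t)$ is then precisely the Berndtsson-type positivity of the associated direct image. This is exactly the non-$S^1$-invariant analogue of Theorem \ref{thm-affine} (whose hypothesis $\vphi(x,t)=\vphi(x,|t|)$ turns subharmonicity into convexity in $-\log|t|^2$) and of the Hessian identity in Proposition \ref{prop-hessE}, and I would invoke the general statement of \cite{Ber15,BBEGZ}. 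The only care needed is that $\cX$ is a singular klt family with a degenerate central fibre and a vertex section: I would run the positivity computation on the regular locus, whose complement is pluripolar, and extend the resulting subharmonic function across it using that $G(\vphi_t)$ is locally bounded above on $\bD^*$.

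\textbf{Reduction to the model path.} As $\Phi$ is bounded, $\vphi_t=\til\vphi_t+\iota^*\Phi$ differs from the model path $\til\vphi_t$ of \eqref{eq-tilvphi} (associated to $\Phi=0$) by a function bounded uniformly in $t$; hence $G(\vphi_t)=G(\til\vphi_t)+O(1)$, and since a bounded perturbation does not change the Lelong number it suffices to treat $\til\vphi_t$. This is advantageous because $\til\vphi_t=\til\vphi(x,|t|)$ is $S^1$-invariant, so by Theorem \ref{thm-affine} the function $G(\til\vphi_t)$ is convex in $-\log|t|^2$ and its Lelong number at $0$ is the (now well-defined) asymptotic slope.

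\textbf{Identifying the slope with the log canonical threshold.} Put $I(t)=\int_{X_t}e^{-R^2}\,dV_{X_t}$, so $G(\til\vphi_t)=-\log I(t)+O(1)$. By Fubini,
\[
\int_{\bD}\frac{I(t)}{|t|^{2c}}\,\tfrac{\sqrt{-1}}{2}\,dt\wedge d\bar t=\int_{\cX|_\bD}\frac{e^{-R^2}}{|\pi^*t|^{2c}}\,dV_\cX,
\]
where $dV_\cX$ is the canonical measure of the $\bQ$-Gorenstein total space and (after the normalization discussed below) its fibrewise slice is $dV_{X_t}$. The weight $e^{-R^2}$ is bounded above by $1$, bounded below on compact subsets of $\cX^\circ$, and Gaussian at infinity, so it does not influence convergence; thus the right-hand side is finite exactly when $|\pi^*t|^{-2c}\,dV_\cX$ is locally integrable along $\cX_0=\{t=0\}$, i.e.\ when $(\cX,c\,\cX_0)$ is klt, i.e.\ when $c<\lct(\cX,\cX_0)$ (the usual description of the threshold via a log resolution). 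Reading off the critical exponent forces $I(t)\asymp|t|^{-2(1-\lct(\cX,\cX_0))}$, whence $G(\vphi_t)\sim 2(1-\lct(\cX,\cX_0))\log|t|$ and the Lelong number $\lim_{t\to0}G(\vphi_t)/\log|t|^2$ equals $1-\lct(\cX,\cX_0)$, in the normalization consistent with Proposition \ref{prop-Easymp}.

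\textbf{Main obstacle.} The delicate step is the Fubini identification: matching the fibrewise volume $dV_{X_t}$ that enters $G$ with the honest slice of the canonical measure $dV_\cX$, so that no extraneous power of $|t|$ appears and the threshold is exactly $\lct(\cX,\cX_0)$ with the stated constant. This is where the $\bQ$-Gorenstein hypothesis is essential: the $T\times\bC^*$-equivariant nowhere-vanishing pluricanonical section on $\cX$, together with the normalization of Lemma \ref{l-normalize} making the relative pluricanonical form $\eta$-invariant, is what pins the relative canonical form down. The secondary difficulty is making the direct-image positivity rigorous across the singular and degenerate loci, as indicated in the first step.
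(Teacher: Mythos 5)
Your skeleton coincides with the paper's own proof: subharmonicity via the Berndtsson-type positivity of Theorem \ref{thm-affine}, reduction to the model path $\til{\vphi}_t$ using the boundedness of $\Phi$ (so $|G(\vphi_t)-G(\til{\vphi}_t)|\le C$), and identification of the Lelong number through the Fubini/integrability criterion of \cite{Ber15}, with the $T\times\bC^*$-equivariant nowhere-vanishing pluricanonical section pinning down the relative volume form (the paper uses $\cL_\eta s=0$ directly; Lemma \ref{l-normalize} is about rescaling $\xi_0$ and $\eta$, so your citation is slightly off, but the content is the same). Your ``Gaussian decay'' treatment of the noncompact fibre ends is a looser version of the paper's exact rescaling identity \eqref{eq-Xtrescale}, which both reduces every integral to the compact region $\{r\le 1\}$ and yields $\lct(\cX\cap\{r\le1\},\cX_0\cap\{r\le1\})=\lct(\cX,\cX_0)$; by the homogeneity of $dV_{X_t}$ under $r\partial_r$ this is legitimate, and your explicit general-position invocation of Berndtsson positivity is if anything slightly more careful than the paper's appeal to the $S^1$-invariant Theorem \ref{thm-affine}.

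The genuine gap is at the extension of subharmonicity across $t=0$, which is exactly where the paper invests its real work. ``Locally bounded above on $\bD^*$'' is automatic for any function subharmonic on $\bD^*$ and is not what the removable-singularity theorem requires: you need $G(\vphi_t)$ bounded above on a punctured neighborhood of $t=0$, equivalently a uniform bound keeping $\int_{X_t}e^{-r^2}dV_{X_t}$ away from $0$ as $t\to 0$, and this does not follow from anything you have written. The paper proves it by first reducing to $\{r\le1\}$ via \eqref{eq-Xtrescale} and then either resolving the singularities of $\{r\le1\}\cap(\cX|_\bD)$ and estimating as in \cite{BJ16, LX17}, or approximating $\xi_0$ by rational Reeb fields $\xi^{(k)}$ and quotienting by the resulting $\bC^*$-action to land in the log Fano case of \cite{Ber15}, where the boundedness was established. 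Relatedly, your step ``reading off the critical exponent forces $I(t)\asymp|t|^{-2(1-\lct(\cX,\cX_0))}$'' is not a valid deduction: an $L^1$ integrability threshold caps growth from above but produces no two-sided pointwise asymptotics. What makes the conclusion correct -- and is the actual content of the Lelong-number lemma of \cite{Ber15} that the paper cites at \eqref{eq-intcut} -- is the combination of the convexity of $G(\til{\vphi}_t)$ in $-\log|t|^2$ (from Theorem \ref{thm-affine}, using $S^1$-invariance of the model path) with the upper bound at $t=0$: together these guarantee the asymptotic slope exists and equals the integrability threshold $1-\lct(\cX,\cX_0)$. So you should replace the asymptotic claim by that lemma and, more importantly, supply the missing uniform bound near the central fibre; as it stands both the subharmonicity on all of $\bD$ and the slope identification rest on it.
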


\begin{proof}

Since $R_\Phi^2=\iota^*(r^2 e^{\vphi})$ is plurisubharmonic over $\cX|_{\bD^*}\cong X\times \bD^*$. Applying Theorem \ref{thm-affine}, we get $G(\vphi_t)$ is subharmonic in $t$. To see that it's subharmonic over $\bD$, we just need to show that $G(\vphi_t)$ is uniformly bounded from above. Because $\Phi$ bounded, we know that
\[
\left|G(\vphi_t)-G(\til{\vphi}_t)\right|\le C.
\]
So we just need to show that $G(\til{\vphi}_t)$ is uniformly bounded from above. 

Because $\eta$ preserves the global section $s\in |m K_{\cX}|$: $\cL_\eta s=0$. As a consequence, 
$dV_{X_t}=\left.\left(\sqrt{-1}^{mn^2} s\wedge \bar{s}\right)^{1/m}\right|_{X_t}$ satisfies $\sigma_t^*dV_{X_t}=dV_{X_1}=dV_{X}$. So we have:
\begin{eqnarray*}
G(\til{\vphi}_t)=-\log\left(\int_X e^{-\sigma(t)^*r^2}(\sigma^* dV_{X_t})\right)
=-\log\left(\int_{X_t} e^{-r^2}dV_{X_t}\right).
\end{eqnarray*}
Because $\cL_{r\partial_r}dV_{X_t}=2n dV_{X_t}$, we can write $dV_{X_t}=2 r^{2n-1}dr\wedge \Omega_{Y_t}$ and calculate:
\begin{eqnarray}
\int_{X_t}e^{-r^2} dV_{X_t}&=&(n-1)!\int_{Y_t}\Omega_Y\nonumber \\
&=&
C_n\cdot \int_{\{r\le 1\}\cap X_t}e^{-r^2}dV_{X_t}\le C_n \int_{\{r\le 1\}\cap X_t}dV_{X_t},\label{eq-Xtrescale}
\end{eqnarray}
where 
$
C_n=\frac{(n-1)!}{\int_0^1e^{-r^2}r^{2n-1}dr^2}.
$

Now the upper boundedness of $G(\til{\vphi}_t)$ can be seen in two ways. For one way, one can resolve the singularity of $\{r\le 1\}\cap (\cX|_{\bD})$ and estimate the integral 
using the method as in \cite[Proof of Lemma 3.7]{Li13} or \cite{BJ16}. The other approximation approach is the following. Recall that $r^2$ is the radius function associated to the vector field $\xi_0=\sum_i a_i z_i \frac{\partial}{\partial z_i}$. Now we choose a sequence of 
vector fields $\xi^{(k)}=\sum_i a_i^{(k)}z_i \frac{\partial}{\partial z_i}$ with $a_i^{(k)}\in \bQ$ and $a_i^{(k)}\rightarrow a_i$ as $k\rightarrow+\infty$. Choose a sequence of new radius function $r^{(k)}=r_{\xi^{(k)}}$ such that $r^{(k)}$ is uniformly $C^0$-comparable to the functions $\sum_{i=1}^N|z_i|^{2/(a_i^{(k)})}$. 
Then there exist $C_1, C_2>0$ such that, for any $\epsilon>0$, we have:
$C_1 (r^{(k)})^{1-\epsilon}\le r\le C_2 (r^{(k)})^{1+\epsilon}$ for $k\gg 1$. So we get:
\begin{eqnarray*}
\int_{\{r\le 1\}\cap X_t}dV_{X_t}\le \int_{\left\{r^{(k)}\le C_1^{(1-\epsilon)^{-1}}\right\}\cap X_t}dV_{X_t}.
\end{eqnarray*}
Because $a_i^{(k)}$ is rational, we can taking quotient of $\cX$ by the $\bC^*$-action generated by $\xi^{(k)}=\sum_i a_i^{(k)}\partial_{z^i}$ and reduces to the log Fano case considered in \cite{Ber15} in which case the upper boundedness of $G(\til{\vphi}_t)$ was shown.

Finally we need to calculate the Lelong number of $G(\til{\vphi}_t)$ with respect to $t$. According to \cite{Ber15}, the Lelong number of $G(t)$ is equal to the infimum of $c$ such that 
\begin{eqnarray*}
\int_U e^{-G-(1-c)\log|t|^2}i d\tau\wedge d\bar{\tau}=\int_{\cX|_\bD}e^{-r^2-(1-c)\log|t|^2}dV_{\cX}<+\infty.
\end{eqnarray*}

We have the following identity:
\begin{equation}\label{eq-intcut}
\int_{\cX|_\bD}e^{-r^2-(1-c)\log|\tau|^2}dV_{\cX}=C_n\cdot \int_{\cX|_\bD\cap \{r\le 1\}}e^{-r^2-(1-c)\log|\tau|^2}dV_{\cX}.
\end{equation}
Because $e^{-1}\le e^{-r^2}\le 1$ is a bounded function, the right-hand-side of \eqref{eq-intcut} is integrable if and only if 
$1-c< \lct(\cX\cap \{r\le 1\}, \cX_0\cap \{r\le 1\})$. Using the rescaling symmetry as used in \eqref{eq-Xtrescale}, we see that
$\lct(\cX\cap \{r\le 1\}, \cX_0\cap \{r\le 1\})=\lct(\cX, \cX_0)$. So we are done.

\end{proof}

Assume $r^2 e^{\vphi_\KE}$ with $\vphi_\KE\in PSH(X, \xi_0)$ is a radius function of a Ricci-flat K\"{a}hler cone metric on $(X, \xi_0)$. Let $(\cX, \xi_0; \eta)$ be a test configuration of $(X, \xi_0)$. We construct \Cyan{a} geodesic ray associated to $(\cX, \xi_0; \eta)$ by solving the homogeneous Monge-Amp\`{e}re equation:
\begin{equation}
(\sddb (R^2e^\Phi))^{n+1}=0 \text{ on } \cX|_\bD, \quad \Phi|_{X\times S^1}=\vphi_\KE.
\end{equation}
Using transversal point of view, this equation is equivalent to the following equation:
\begin{equation}
(d\chi+\sddb\Phi)^n\wedge\chi=0 \text{ on } \cY|_{\bD}, \quad \left.\Phi\right|_{Y\times S^1}=\left.\vphi_\KE\right|_Y.
\end{equation}

By considering the envelope (or its equivalent formulation on $\cX|_\bD$) 
\[
\Phi: =\sup\left\{\Psi\in PSH(\cY|_\bD, \xi_0): \Psi \le \left.\vphi_\KE\right|_Y  \text{ on } \partial (\cY|_\bD)=Y\times S^1 \right\},
\]
then the following result can be proved in exactly the same way as in \cite[Proposition 2.7]{Ber15} by using the transversal K\"{a}hler structures of $(\cY, \xi_0)$. Note that this kind of extension has also been used in \cite{DS17} (see also \cite{CS15, HL18}). 
\begin{prop}[{see \cite[Proposition 2.7]{Ber15}}]
$\Phi$ is locally bounded such that $R^2 e^\Phi$ has positive curvature current and satisfies $(\sddb (R^2 e^\Phi))^{n+1}=0$ on $\cX|_\bD$.
\end{prop}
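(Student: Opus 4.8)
The plan is to follow the Perron--Bremermann envelope method of \cite[Proposition 2.7]{Ber15}, carried out on the link $\cY$ via the transversal K\"ahler structure. The three assertions—local boundedness, positivity of the curvature current, and the vanishing of the Monge--Amp\`ere measure in the interior—are established in turn, and the cone structure reduces each to a statement one complex dimension lower on $\cY|_\bD$.

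First I would establish that $\Phi$ is locally bounded. Since $\vphi_\KE\in PSH(X,\xi_0)$ is bounded, its boundary restriction $\vphi_\KE|_Y$ is a bounded continuous function on $Y\times S^1$. The upper bound is the maximum principle: for every competitor $\Psi$ the function $R^2e^\Psi$ is plurisubharmonic on $\cX|_\bD$ and is dominated on the distinguished boundary, so $\Psi\le\sup_{Y\times S^1}\vphi_\KE|_Y$ throughout. For the lower bound it suffices to observe that the constant $C_0:=\inf_{Y\times S^1}\vphi_\KE|_Y$ lies in $PSH(\cY|_\bD,\xi_0)$ (because $d\chi\ge 0$ is the reference transversal K\"ahler form, so $d\chi+\sddb C_0\ge 0$) and satisfies the boundary inequality; hence $\Phi\ge C_0$, and in particular the defining family is nonempty.

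Next I would show that $R^2e^\Phi$ has positive curvature current. By the standard theory of plurisubharmonic envelopes, the upper semicontinuous regularization $\Phi^*$ of the supremum of a locally uniformly bounded family in $PSH(\cY|_\bD,\xi_0)$ again belongs to $PSH(\cY|_\bD,\xi_0)$. Continuity of the boundary data $\vphi_\KE|_Y$ guarantees that $\Phi^*$ still obeys the boundary inequality on $Y\times S^1$, so $\Phi^*$ is itself a competitor; since $\Phi^*\ge\Phi$ tautologically while $\Phi^*\le\Phi$ by the defining supremum, one gets $\Phi=\Phi^*$. Therefore $d\chi+\sddb\Phi\ge 0$ on $\cY|_\bD$, which by \eqref{eq-transverse} is equivalent to $\sddb(R^2e^\Phi)\ge 0$ on $\cX|_\bD$.

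Finally, for the homogeneous Monge--Amp\`ere equation I would invoke the Bedford--Taylor balayage argument exactly as in \cite{Ber15}: $\Phi$ is maximal in the interior, for otherwise, at an interior point where $(d\chi+\sddb\Phi)^n\wedge\chi\neq 0$, one could solve the Dirichlet problem on a small transversal ball and push $\Phi$ strictly upward while preserving the boundary constraint, contradicting the definition of the envelope. Hence $(d\chi+\sddb\Phi)^n\wedge\chi=0$ in the interior of $\cY|_\bD$, which by the transversal identity \eqref{eq-transverse} is precisely $(\sddb(R^2e^\Phi))^{n+1}=0$ in the interior of $\cX|_\bD$. The main obstacle I anticipate is not this formal scheme, which is classical, but controlling the construction across the singularities of $\cX$—the klt singularities of the fibres together with the cone vertices $o_t$—where the ambient space is not smooth. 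The transversal reduction to $\cY$ lowers the dimension and replaces the cone by its link, but one must still justify that Bedford--Taylor theory and the comparison principle apply on these possibly singular transversal K\"ahler spaces; as in \cite{DS17} and \cite{Ber15} this is handled by working on the regular locus and using that the singular set is pluripolar, hence negligible for the relevant Monge--Amp\`ere masses, so that the interior equation extends across it.
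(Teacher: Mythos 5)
Your proposal is correct and is essentially the paper's own proof: the paper offers no independent argument for this proposition, stating only that it ``can be proved in exactly the same way as in \cite[Proposition 2.7]{Ber15} by using the transversal K\"ahler structures of $(\cY,\xi_0)$,'' and your Perron--Bremermann scheme (barrier bounds, usc regularization, Bedford--Taylor maximality, reduction to the compact link $\cY|_\bD$, with the singular locus handled as pluripolar) is precisely that argument transplanted to the transversal setting. The only detail worth tightening is at the boundary: rather than appealing to continuity of $\vphi_\KE|_Y$ and a bare maximum principle on the ``distinguished boundary'' (which must also account for the non-compact cone direction, tamed here by the $T$-invariance you note), Berman secures the boundary inequality for $\Phi^*$ via explicit competitors such as $\max\left(C_0,\ \vphi_\KE+A\log|t|^2\right)$ together with subharmonicity along the $\bC^*$-orbit discs.
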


Finally we can give the proof of Theorem \ref{thm-RF2Ding}.
\begin{proof}[Proof of Theorem \ref{thm-RF2Ding}]
Let $\Phi$ be the geodesic ray emanating from $\vphi_\KE$ that is determined by $(\cX, \xi_0)$. Let $\vphi_t$ be the associated path in $PSH(X, \xi_0)$. Then because $(\sddb (R^2e^\Phi))^{n+1}=0$,
$E(\vphi_t)$ is affine in $t$ by Proposition \ref{prop-hessE}. $G(\vphi_t)$ is subharmonic in $t$ by Proposition \ref{prop-GLelong}. So $D(t):=D(\vphi_t)$ is subharmonic over $\bD$. 
Because $D(t)$ depends only $|t|$, $D(t)$ is convex in $-\log|t|^2$. Because $D(\vphi_t)\ge D(\vphi_{\KE})$ for any $t\in \bD$, we see that $D(t)$ is a non-decreasing function in $-\log|t|^2$.

By Proposition \ref{prop-Easymp} and Proposition \ref{prop-GLelong}, we have:
\begin{eqnarray}\label{eq-Dslope}
\lim_{t\rightarrow 0}\frac{D(t)}{-\log|t|^2}=\frac{D_{\eta}\vol(\xi_0)}{\vol(\xi_0)}-(1-\lct(\cX, \cX_0))=D^\NA(\cX, \xi_0; \eta).
\end{eqnarray}
If $D^\NA(\cX, \xi_0; \eta)=0$, then because $D(t)$ is convex and non-decreasing in $-\log|t|^2$, we see that $D(t)$ is affine and hence $G(\vphi_t)$ is affine. So by Theorem \ref{thm-affine}, there exists holomorphic vector field $\eta_0$ such that
$\vphi_t=(\sigma_t)^*\vphi_\KE$ where $\sigma_t=\exp(\log|t| \eta_0)$.
The rest of the argument is the same as \cite[Proposition 3.3]{Ber15} as extended to the Ricci-flat cone setting in \cite{CS15}.

\end{proof}

\vspace{6mm}
\noindent
Chi Li, Purdue University.   \\
li2285@purdue.edu

\medskip
\noindent
Xiaowei Wang, Rutgers University at Newark.\\ 
xiaowwan@rutgers.edu

\medskip
\noindent
Chenyang Xu, Beijing International Center for Mathematical Research. \\
cyxu@math.pku.edu.cn\\
\noindent
Department of Mathematics, Massachusetts Institute of Technology. \\
cyxu@math.mit.edu\\
\noindent
Current address: Department of Mathematics, Princeton University. \\
chenyang@princeton.edu

\end{document}